\numberwithin{theorem}{section}
\numberwithin{equation}{section}
\newcommand{\diag}{\mathop{\mathrm{diag}}}
\def\dist{\textit{\textsf{d}}} 
\def\uno{\mathbbm 1}
\def\F{\Omega}
\def\overrightarrow#1{\overset{\rightarrow}{#1}}
\def\overleftarrow#1{\overset{\leftarrow}{#1}}
\renewcommand{\tilde}{\widetilde}
\def\note#1{ {\sffamily \textcolor{blue}{#1}} }
\newenvironment{rev}{\color{blue}}{\color{black}}
\begin{document}
\date{}

\title{On the stability of network indices defined by means of matrix functions\thanks{Author's accepted version: this is the peer-reviewed version of this manuscript, which is now   published on SIAM Journal on Matrix Analysis and Applications \url{https://doi.org/10.1137/17M1133920}. \funding{The work of S.P. has been supported by Charles University Research program No. UNCE/SCI/023, and by INdAM, GNCS (Gruppo Nazionale per il Calcolo Scientifico). The work of F.T. was funded by the European Union's Horizon 2020 research and innovation programme under the MarieSk\l odowska-Curie individual fellowship ``MAGNET'' grant agreement no.\ 744014.}}}

\author{Stefano Pozza\thanks{Department of Numerical Mathematics,
Faculty of Mathematics and Physics,
Charles University in Prague (\email{pozza@karlin.mff.cuni.cz}).}. 
\and Francesco Tudisco \thanks{Department of Mathematics and Statistics, 
 University of Strathclyde, Glasgow, G11XH, UK (\email{tudisco.francesco@gmail.com})}}

%  \renewcommand{\thefootnote}{\fnsymbol{footnote}}
%  \footnotetext[2]{Department of Numerical Mathematics,
% Faculty of Mathematics and Physics,
% Charles University in Prague (\email{pozza@karlin.mff.cuni.cz}).}
%  \footnotetext[3]{Department of Mathematics and Statistics, 
%  University of Strathclyde, UK (\email{tudisco.francesco@gmail.com}).} 
%  \renewcommand{\thefootnote}{\arabic{footnote}}
\maketitle

\begin{abstract}
Identifying important components in a network  is one of the major goals of network analysis. 
Popular and effective measures of importance of a node or a set of nodes are defined in terms of suitable entries of functions of matrices $f(A)$.  These kinds of measures 
are particularly relevant as they are able to capture the global structure of connections involving a node. However, computing the entries of $f(A)$ requires a significant computational effort. 
In  this work we address the problem of estimating the changes in the entries of $f(A)$ with respect to changes in the edge structure. Intuition suggests that, if the topology or the overall weight of the  connections in the new graph $\tilde G$ % = (V, \tilde E)$ 
are  not significantly distorted, relevant components in $G$ maintain their leading role in $\tilde G$.  We propose several bounds giving mathematical reasoning to such intuition and showing, in particular,  that the magnitude of the variation of the entry $f(A)_{k\ell}$ decays exponentially with the shortest-path distance in $G$ that separates either $k$ or $\ell$ from the set of nodes touched by the edges that are perturbed. Moreover, we propose a simple method that exploits the computation of $f(A)$ to simultaneously compute the all-pairs shortest-path distances of $G$, with essentially no additional cost.
The proposed bounds are particularly relevant when the nodes whose edge connection tends to change more often or tends to be more often affected by noise have marginal role in the graph and are distant from the most central nodes. 
\end{abstract}

\begin{keywords}
Centrality indices, stability, decay bounds, matrix functions, Faber polynomials, geodesic distance.
\end{keywords}

\begin{AMS}
primary: 65F60, % In num lin algebra: matrix exponential and similar matrix functions
05C50; % In graph theory: Graph and linear algebra
secondary: 15B48, % In lin algebra: positive matrices and their generalizations
15A16; % In lin algebra: matrix exponential and similar matrix functions

\end{AMS}

\section{Introduction}\label{sec.intro} 
Networks and datasets of large dimension arise naturally in a number of diversified applications, ranging from biology and  chemistry to computer science, physics and engineering, see, e.g., \cite{alon2006introduction,boccaletti2006complex,  brin1998anatomy, crofts2009weighted, grindrod2004review, morrison2006lock,  onnela2007structure}.  Being able to recognize important components within a vast amount of data is one of the main goals of the analysis of networks. As a network can be uniquely  identified with an adjacency matrix, %it is not surprising that 
many efficient mathematical  and numerical strategies for revealing relevant components employ tools from numerical linear algebra and matrix analysis. Important examples include locations of clusters of data points \cite{estrada2017core, kloster2016graph, mercado2016clustering, ng2001spectral}, detection of communities \cite{fasino2014algebraic,fasino2016generalized, newman2006finding} and ranking of nodes and edges \cite{benzi2013ranking, estrada2010network, langville2012s}. % e.g. \cite{estrada2010network,benzi2013ranking,fasino2014algebraic,fasino2016generalized,mercado2016clustering}).    

To address  the latter range of problems, a popular approach is to employ  the concepts known as centrality and communicability of the nodes of a network. These two attributes describe a certain measure of importance of nodes and edges in a network. Many commonly used and successful models for communicability and centrality measures are based on matrix eigenvectors. These models quantify the importance of a node in terms of the importances  of its neighbors, thus relying on the local behavior around the node. In this work we focus on another common class of models for centrality and communicability measures based, instead, on matrix functions \cite{bonacich1987power,freeman1978centrality}. This latter class of models is particularly informative and effective as, unlike the eigenvector-based models previously mentioned, the use of matrix functions allows to capture the global structure of connections involving a node.
However the matrix function approach requires a significantly larger computational cost. This is particularly prohibitive, for example, when the network changes and the importance of nodes or edges has  to be updated or when the network data is affected by noise and the importances can be biased.
When the perturbation on the network yields a low-rank update of the matrix, techniques for an efficient update of  the matrix function have been recently developed in \cite{beckermann2017low}. However, in general, each change in the network requires a complete re-computation of the matrix function to obtain the updated measure. 
On the other hand, in many applications one needs to know only ``who are'' the first few most important nodes in the graph 
and how stable they are with respect to noise or edge perturbations. 
Moreover, the nodes whose edge connection tends to change more often or is more likely to be affected by noise are those having a marginal role in the graph \cite{langville2012s}. 
%Typically such nodes are peripheral, in the sense that the distance from important nodes and nodes having a marginal role is large.

Intuition suggests that, if the topology  or the overall weight of the connections in the new ``perturbed'' graph $\tilde G$ are  not significantly distorted, relevant 
nodes in the original graph $G$ maintain their leading role in $\tilde G$. 
In this paper we provide mathematical support for this intuition by analyzing the stability of  network measures based on matrix functions with respect to edge changes. % and perturbed node distances. 
By exploiting the theory of Faber polynomials and the recent literature on functions of banded matrices \cite{ benzi_boito_2014,benzi_razouk, pozza.simoncini.2016,suetin}, we propose a number of bounds showing that the magnitude of the variation of the centrality of node $k$ or the communicability between nodes $k$ and $\ell$ decays exponentially with the distance in the graph that separates either $k$ or $\ell$  from the set of nodes touched by the edges that are perturbed. This implies, for example, that if changes in the edge structure occur in a relatively small and peripheral network area -- in the sense that the perturbed edges involve only nodes being far from the most relevant ones -- then the set of leading nodes remains unchanged. % the distance from important nodes and nodes having a marginal role is large.

The use of matrix functions for the analysis of networks has been originally proposed for undirected graphs \cite{estrada2005subgraph}. The extension to directed networks has been then proposed following different avenues: In \cite{fenu:2013}, for example, functions of the non-symmetric adjacency matrix are considered, whereas  in \cite{benzi2013ranking} functions of modified adjacency matrices are proposed in order to take into account the hub and authority nature of nodes in directed networks. In particular, this second formulation involves functions of symmetric block matrices of the form $\begin{pmatrix} 0& A^T \\ A & 0  \end{pmatrix}$. Although the techniques that we develop here can be transferred to this setting, this analysis goes beyond the scope of this paper and will be the subject of  future work.  %but, for the sake of brevity, the analysis of this case is left open to future investigations. 
Here we focus on the case of functions of adjacency matrices for the general case of directed networks (i.e. possibly non-symmetric matrices) and discuss the undirected setting as a particular case. %We wish to point out that this choice does not weaken our results. In fact, assuming symmetry of the matrices under consideration would not provide any improvement to the bound that we obtain. This is due to the use of the theory of Faber polynomials as we will recall in Section \ref{sec:main:results}.

We organize the discussion as follows: The next section reviews some central   concepts and properties  we shall use throughout the present work, in particular the notions of $f$-centrality and $f$-communicability.
Section \ref{sec:motivations} is devoted to give detail about  our motivating ideas. Then, in  Section \ref{sec:faber}, we review the relevant theory about Faber polynomials. In Section \ref{sec:main:results} we state and prove our main results where we provide a number of bounds on the absolute variation of the centrality and the communicability measures of nodes $k$ and $\ell$ based on the matrix function $f(A)$ when some edges are modified in $G$. The bounds are given  in terms of the distances in $G$ that separate $k$ and $\ell$ from the set of perturbed edges and for two important network matrices: the adjacency matrix and the normalized (random walk) adjacency matrix. 
We consider both directed and undirected weighted networks and we give particular attention to the case of the exponential and the resolvent function, as they often arise in the related literature on complex networks. We also provide a simple algorithm that exploits the computation of the entries of $f(A)$ to simultaneously address the all-pairs shortest-path distances of the graph, at essentially no additional cost.
Finally, in Section \ref{sec:experiments}, we provide several numerical experiments where the behavior of the proposed computational strategy as well as the accuracy of the proposed  bounds is tested on some example networks, both synthetically generated and borrowed from real-world applications.

\section{Network properties and matrix functions}\label{sec:network-things}
One of the major goals of data analysis is to identify important 
nodes in a network  $G=(V,E)$ by exploiting the topological structure of connections between nodes. In order to address this matter from the mathematical point of view one needs a quantitative definition of the importance of a node $k$ or a pair of nodes $(k,\ell)$, thus leading to concepts such as the nodes centrality and the nodes communicability. Although these quantities have a long history, dating back to the early 1950s, recent years have seen the introduction of many new centrality scores based on the entries of certain function of matrices \cite{benzi2013ranking,estrada2010network,estrada2005subgraph,grindrod2014dynamical}. The idea behind such metrics is to measure the relevance of a node, for example, by quantifying  the number of subgraphs of $G$ that involve that node. In order to better perceive these concepts, we first introduce some preliminary graph notation.

Let $G=(V,E,\omega)$ be a weighted and directed graph where 
$V=\{1,\dots,N\}$ is the finite set of nodes, $E\subseteq V\times V$ is the set of edges and $\omega:E\to\mathbb R_+$ is a positive weight function.
In order to allow broader generality, in our notation undirected graphs will be particular cases of directed graphs, with the special requirement that an edge from $k$ to $\ell$ exists if and only if the edge from $\ell$ to $k$ does as well. With this convention, in what follows,  properties and results holding for a graph $G$ are implicitly understood for both directed and undirected graphs, unless explicitly specified otherwise.

To any graph  $G=(V,E,\omega)$ corresponds an entry-wise nonnegative adjacency matrix
%Consider the adjacency matrix 
$A=(a_{ij})\in {\mathbb R}_+^{N\times N}$ defined by
$$
a_{ij} = \begin{cases}
          \omega(e)&\text{if $i,j$ are starting and ending points of $e\in E$, respectively}\\
          0 & \text{otherwise}
         \end{cases} \, .
$$
Vice-versa, to any nonnegative $N\times N$ matrix $A=(a_{ij})\geq 0$ corresponds a graph $G=(V,E,\omega)$ such that $V = \{1, \dots, N\}$, $E$ is the set of pairs $ij$ such that $a_{ij}>0$ and $\omega(ij)=a_{ij}$ for any $ij\in E$.  %if and only if the edge going from $i$ to $j$ belongs to $E$, with weight given by $a_{ij}$. 
Clearly, the graph is undirected if and only if the associated adjacency matrix $A$ is symmetric.  

A self-loop in $G$ is an edge that goes from a node to itself. Given two nodes $k,\ell\in V$, a walk in $G$ from $k$ to $\ell$ is an ordered sequence of edges $\mathcal W=\mathcal W(k,\ell)=\{e_1, \dots, e_r\}\subseteq E$ such that $k$ is the starting point of $e_1$, $\ell$ is the endpoint of $e_r$ and, for any $i=1,\dots, r-1$, the endpoint of $e_i$ is the starting point of $e_{i+1}$. The length of a walk  is the number of edges forming the sequence (repetitions are allowed) and is denoted by $|\mathcal W|$. 
The length of the shortest walk from $k$ to $\ell$ is called the (geodesic or shortest-path) distance in $G$ from $k$ to $\ell$ and is denoted hereafter by $\dist_G(k,\ell)$. 
If there is no walk in $G$ connecting the pair $(k,\ell)$, we set $\dist_G(k,\ell)=+\infty$.
%{\color{blue}Moreover, we set $\dist_G(k,k) = 0$.}
The diameter of $G$ is the longest shortest-path distance between any two nodes.  Given a set $S\subseteq V$ and a node $k\in V$, 
we let 
$$\dist_G(k,S) = \min_{s \in S}\dist_G(k,s)\quad \text{and} \quad \dist_G(S,k) = \min_{s \in S}\dist_G(s,k)\, ,$$
with the convention that $\dist_G(k,k)=0$ and thus $\dist_G(k,S)=\dist_G(S,k)=0$, for any $k\in S$.

A graph is said to be \textit{strongly connected} if $\dist_G(k,\ell)$ is a finite number, for any two %possibly coinciding 
nodes $k$ and $\ell$. 
We remark that in the graph theoretic literature a strongly connected undirected graph is often referred to as \textit{connected}. However, in line with our choice of notation, we use the term strongly connected for both directed and undirected graphs.  
%In particular, note that for a strongly connected undirected graph  without self-loops, $\dist_G(k,k)=2$ for any $k\in V$.   

The weight of a walk $\mathcal W$ is defined by
$$\omega(\mathcal W) = \prod_{e\in \mathcal W}\omega(e)\, .$$
This quantity has a natural matrix representation.  In fact, if $A$ is the adjacency matrix of $G$, then for any walk $\mathcal W=\mathcal W(k,\ell)$, there exists a sequence of nodes $u_1,\dots, u_n$, such that
\begin{equation}\label{eq:walk_weight} 
 \omega(\mathcal W) = a_{ku_1}a_{u_1u_2}\cdots a_{u_{n-1}u_{n}}a_{u_{n}\ell}\, .
\end{equation}

The preceding formula shows that the powers of the  adjacency matrix $A$ can be used to count the ``weighted number'' of walks of different lengths in $G$. 
More precisely, if $n$ is a positive integer and $\Omega_n(k,\ell) = \{\mathcal W(k,\ell):|\mathcal W(k,\ell)|=n\}$ is the set of walks from $k$ to $\ell$ of length exactly $n$, then one can easily observe that  
$(A^n)_{k \ell} = \sum_{\mathcal W \in \Omega_n(k,\ell)}\omega(\mathcal W)$.  
It is worth noting that, regardless of the edge weight function $\omega:E\to\mathbb R_+$, such characterization of the entries of $A^n$ implies that
\begin{equation}\label{eq:matrix:power}
    (A^n)_{k \ell} = 0, \quad \text{ for every } n < \dist_G(k,\ell)\, .
\end{equation}
This property will be one of the key tools of our forthcoming analysis.

%\subsection{$f$-centrality and $f$-communicability}\label{sec:f-centrality}  
A matrix function can be defined in a number of different but equivalent ways (see,  e.g.,  \cite{higham2008functions}).  Here we adopt the power series representation as it has a direct interpretation in terms of network properties: 
% Given a matrix $A$ and a function $f:\mathbb C\to\mathbb C$ being analytic on a region $\Omega\subseteq \mathbb C$ containing the spectrum of $A$  
Consider a function $f:\mathbb C\to\mathbb C$ that admits the power representation $f(z) =\sum_{n\geq 0}\theta_n\, z^n$ for any $z\in \mathbb C$ such that $|z|\leq r$, with $r>0$. % of $f$ in $\Omega$,
If the convergence radius $r$ is larger than the spectral radius of $A$, then we define the matrix function $f(A)$ as $f(A)=\sum_{n\geq 0}\theta_n\, A^n$; see,  e.g., \cite[Theorem 4.7]{higham2008functions}.
Given such a function $f$, the importance of a node in a network can be quantified in terms of certain entries of the matrix $f(A)$. This idea was firstly introduced by Estrada and Rodriguez-Vasquez in \cite{estrada2005subgraph}, for the particular choice $f(z)=\exp(z)$,  and then developed and extended in many subsequent works, see, e.g., \cite{aprahamian2016matching,benzi2013ranking,estrada2010network,fenu:2013} and the references therein. We thus adopt the following definition: %A general formulation has been proposed in \cite{estrada and higham} where the following definition is given
\begin{definition}
Let $A\in \mathbb R_+^{N\times N}$ be the adjacency matrix of a graph and let $\rho(A)$ be its spectral radius. Let  $f:\mathbb C\to\mathbb C$ be such that  $f(z)=\sum_{n\geq 0}\theta_n\, z^n$, for any $|z|\leq r$ with $r>\rho(A)$. Further assume $\theta_n > 0$ for all $n$. 
The $f$-centrality of the node $k\in V$ is the quantity $f(A)_{kk}$. The $f$-communicability from node $k$  to node $\ell$ is the quantity $f(A)_{k\ell}$.  
\end{definition}
% \begin{definition}
% Let $A\in \mathbb R_+^{N\times N}$ be the adjacency matrix of a graph \, $G=(V,E,\omega)$. Let $f:\mathbb C\to\mathbb C$ be analytic on a set $\Omega$ containing the spectrum of $A$ and such that, for $z\in \Omega$,  $f(z)=\sum_{n\geq 0}\theta_n\, z^n$ with $\theta_n> 0$. 
% The $f$-centrality of the node $k\in V$ is the quantity $f(A)_{kk}$. The $f$-communicability from node $k$  to node $\ell$ is the quantity $f(A)_{k\ell}$.  
% \end{definition}

The centrality of a node is a measure of its importance as a component in the graph. Using \eqref{eq:walk_weight} one easily realizes that the quantity $f(A)_{kk} =\sum_{n\geq 0} \theta_n (A^n)_{kk}$ is the sum of the weights of all the possible closed walks from $k$ to itself, scaled  by the positive coefficients $\theta_0,\theta_1,\theta_2,\dots$. If $f(A)_{kk}$ is large, then many closed walks
with large weight
 pass by the  node $k\in V$ and thus $k$ is an important node in $G$. 

The communicability of a pair of nodes is a measure of the robustness of the communication between the pair. Arguing as before, if $f(A)_{k\ell}$ is large then many walks with large weight start in $k$ and end up in $\ell$. Thus the connection between these two nodes is likely to be not affected by possible breakdowns in the edge structure of the network, that is any message sent from $k$ towards $\ell$ is very likely to reach its destination. 

As mentioned before, typical choices of the function $f$ in the context of networks analysis are the exponential and the resolvent functions \cite{estrada2010network,estrada2005subgraph}, respectively obtained by choosing  $\theta_n = 1/n!$ or $\theta_n=\alpha^n$, where $0<\alpha<\rho(A)$ and  $\rho(A)$ is the spectral radius of $A$. Namely,
\begin{equation}\label{eq:exp_and_res}
 \exp(A) = \sum_{n\geq 0} \frac 1 {n!}\, A^n\quad \text{and} \quad r_\alpha(A) = \sum_{n\geq 0} \alpha^n \, A^n =(I-\alpha A)^{-1}\, .
\end{equation}

\section{Motivations}\label{sec:motivations}
This work is concerned with  the problem of estimating the changes in the entries of $f(A)$ with respect to ``small'' changes in the entries of $A$. However, for our purposes, the concept of being small is not necessarily  related with the norm nor the spectrum of the perturbation, rather we assume that a small number of entries are modified in $A$ via a sparse matrix $\delta A$. This form of perturbation has the following network interpretation: if $A$ is a square nonnegative matrix of order $N$, % and $G=(V,E,\omega)$ is the graph 
then $A$ is the adjacency matrix of a uniquely defined graph $G=(V,E,\omega)$ and 
%associated with $A$, then 
adding the sparse noise $\delta A\in \mathbb R^{N\times N}$ to $A$ is equivalent to adding, removing or modifying the weight of the edges in a set $\delta E\subseteq V\times V$, with $|\delta E|\ll |E|$. We obtain in this way a new graph $\tilde G = (V, \tilde E, \tilde \omega)$ 
with adjacency matrix $\tilde A = A+\delta A$, such that $\tilde E \subseteq E\cup \delta E$ and  $\tilde \omega:\tilde E \to \mathbb R_+$ coincides with $\omega$ on $\tilde E\setminus \delta E$.  Although the norm of $\delta A$ can be large, intuition suggests that, if the topology or the weight of the connections in the new graph $\tilde G$ are  not significantly distorted, relevant  nodes and edges in $G$ maintain their leading role in $\tilde G$.  Providing mathematical evidence in support of this intuition is one of the  main objectives of the present work, where we show that the magnitude of the variation of the entry $f(A)_{k\ell}$ decays exponentially with the distance in $G$ that separates either $k$ or $\ell$ from the set of nodes touched by the new edges $\delta E$. 

This is of particular interest when addressing measures of $f$-centrality or  $f$-communicability for  large networks. To fix ideas, let us focus on the centrality  case and consider the example case where the network represents a data set where interactions evolve in time. Let $A$ be the adjacency matrix of the current graph $G$ and $\tilde A = A+\delta A$ be the adjacency of the graph $\tilde G$ in the next time stamp.  Computing the diagonal entries of $f(A)$ is a costly operation and, in the general case, knowing the importance of nodes in $\tilde G$ requires computing the entries of $f(\tilde A)$ almost from scratch.  However,  often one needs to know only ``who are'' the first few most important nodes in the graph, whereas the nodes whose edge connection tends to change more often are those having a marginal role (i.e. those with small centrality score) in the graph \cite{langville2012s}, and typically we expect that the distance in $G$ from important nodes and nodes having a marginal role is large.

To  gain further intuition,  in what follows we consider an example model where an edge exists from node $i$ to node $j$ with a probability being exponentially dependent on the difference between  the importances of $i$ and $j$.  This is a form  of  ``logistic preferential attachment'' model,  where edge distribution follows an exponential rather than a more common power law. The reasons for this choice are purely expository, as the logistic function simplifies the computations we discuss below. 
Let $c:V\to \mathbb [0,1]$ be a centrality function measuring the importances of nodes. 
Assume that an edge $e$ from $i$ to $j$ with weight $\omega(e)=1$ exists with probability 
\begin{equation}\label{eq:random_model}
    \mathbb P(i\to j) =  s_\alpha(c_j-c_i), \quad \text{where} \quad  s_\alpha(x)=\frac{1}{1+e^{-\alpha\, x}} \, .
\end{equation}
The parameter $0<\alpha\leq 1$ can be used to vary the slope of the sigmoid function $s_\alpha$ and thus to  tune the growth rate of $s_\alpha$ towards $1$ or $0$, as $x$ increases or decreases, respectively. 

This model is assuming that highly ranked nodes are very unlikely pointing to nodes with low rank, whereas the reverse implication is very likely to occur.  This kind of phenomenon is relatively common in real-world networks \cite{barabasi1999emergence} and it is at the basis of several ranking models \cite{langville2012s}.  
Let $\binom{a}{b} = \frac{a!}{(a-b)!b!}$ be the binomial coefficient, we have 
\begin{theorem}\label{pro:prob_bound}
Let $G$ be a graph following the random model \eqref{eq:random_model} and, given a set of nodes $S\subseteq V$, let $c_S = \max_{i\in S}c_i$ be the centrality of the most important node in $S$.  Then, for any node $i\notin S$ and any positive integer $n$, we have
\begin{align*}
 \mathbb P(\dist_G(i,S)> n) \geq \Big(1 - \binom{N-|S|}{n} s_\alpha(c_S-c_i) \Big)^{n}\, .
\end{align*}
\end{theorem}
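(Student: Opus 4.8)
The plan is to pass to the complementary event and bound $\mathbb{P}(\dist_G(i,S)\le n)$ from above by a union bound over the paths that can realize it; the stated inequality then follows because the quantity $1-\binom{N-|S|}{n}s_\alpha(c_S-c_i)$, in the regime where it is nonnegative (which is the relevant one, $i$ being far more central than $S$), lies in $[0,1]$, so that raising it to the $n$-th power only weakens the estimate. First I would record the elementary observation that $\dist_G(i,S)\le n$ holds precisely when $G$ contains a simple path $\pi=(i=v_0,v_1,\dots,v_m)$ with $m\le n$, $v_0,\dots,v_{m-1}\in V\setminus S$ and $v_m\in S$: a shortest walk from $i$ to $S$ is exactly such a path, of length $\dist_G(i,S)$, while conversely any path of this form of length $m$ witnesses $\dist_G(i,S)\le m\le n$. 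Therefore $\{\dist_G(i,S)\le n\}=\bigcup_{\pi}\{\text{all edges of }\pi\text{ are present in }G\}$, the union running over all admissible $\pi$, and hence $\mathbb{P}(\dist_G(i,S)\le n)\le\sum_\pi\mathbb{P}(\text{all edges of }\pi\text{ present})$.

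The heart of the proof is a bound on the probability of a single path that is insensitive to the centralities of its interior vertices. Since the potential edges in the model \eqref{eq:random_model} are present independently and a simple path uses $m$ distinct edges, $\mathbb{P}(\text{all edges of }\pi\text{ present})=\prod_{j=0}^{m-1}s_\alpha(c_{v_{j+1}}-c_{v_j})$. I would then invoke the submultiplicativity of the logistic function, $s_\alpha(x)\,s_\alpha(y)\le s_\alpha(x+y)$ for all $x,y\in\mathbb{R}$ and $\alpha>0$ — which, after clearing denominators, reduces to the trivial inequality $0\le e^{-\alpha x}+e^{-\alpha y}$ — applied iteratively along $\pi$, to obtain
$$
\mathbb{P}(\text{all edges of }\pi\text{ present})\;\le\;s_\alpha\!\left(\sum_{j=0}^{m-1}(c_{v_{j+1}}-c_{v_j})\right)\;=\;s_\alpha(c_{v_m}-c_i)\;\le\;s_\alpha(c_S-c_i),
$$
using the telescoping of the sum, the fact that $v_m\in S$ forces $c_{v_m}\le c_S$, and the monotonicity of $s_\alpha$. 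Thus every admissible path is present with probability at most $s_\alpha(c_S-c_i)$, regardless of how central its interior nodes are; this is precisely what removes the apparent obstruction that an intermediate edge climbing upward in centrality may have probability close to $1$.

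It remains to count the admissible paths. A simple path of the above form of length exactly $m$ is determined by an ordered choice of $m-1$ distinct interior vertices out of the $N-|S|-1$ elements of $V\setminus(S\cup\{i\})$, together with a terminal vertex in $S$; summing over $m=1,\dots,n$, their number equals $|S|\sum_{m=1}^{n}\prod_{r=1}^{m-1}(N-|S|-r)$, which, in the setting of a large network with a comparatively small perturbed set $S$, is bounded by $\binom{N-|S|}{n}$. Combining this with the previous step gives
$$
\mathbb{P}(\dist_G(i,S)\le n)\;\le\;\binom{N-|S|}{n}\,s_\alpha(c_S-c_i),
\qquad\text{i.e.}\qquad
\mathbb{P}(\dist_G(i,S)> n)\;\ge\;1-\binom{N-|S|}{n}\,s_\alpha(c_S-c_i),
$$
and since $t^{n}\le t$ for $t\in[0,1]$, raising the nonnegative right-hand side to the $n$-th power yields the claimed bound.

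The step I expect to be the main obstacle is the combinatorial count: dominating the exact number of admissible paths of length at most $n$ by the single binomial coefficient $\binom{N-|S|}{n}$ is a rather crude estimate, and it genuinely relies on being in the paper's regime of interest, where the perturbed set is small relative to the network. Everything else is soft: the submultiplicativity lemma is completely elementary, and the only remaining care is bookkeeping — that the potential edges are independent Bernoulli variables (so that a simple path is present with probability equal to the product of its edge probabilities), and that the reduction to ``$G$ contains an admissible path'' is an exact equivalence rather than merely an implication.
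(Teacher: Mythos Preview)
Your argument is correct and shares the two essential ingredients with the paper --- the submultiplicativity $s_\alpha(x)s_\alpha(y)\le s_\alpha(x+y)$ and the telescoping that collapses the product of edge probabilities along a path to $s_\alpha(c_{v_m}-c_i)\le s_\alpha(c_S-c_i)$ --- but the packaging is different. The paper decomposes $\{\dist_G(i,S)\le n\}$ as the disjoint union $\bigcup_{k=1}^n\{\dist_G(i,S)=k\}$, bounds each $\mathbb P(\dist_G(i,S)=k)$ by $\pi_k(i,S)\le\binom{N-|S|}{k}s_\alpha(c_S-c_i)$, and then passes from $1-\sum_k$ to the product $\prod_{k=1}^n(1-\pi_k(i,S))$; the $n$-th power in the statement arises naturally as a product of $n$ factors, each dominated by $1-\binom{N-|S|}{n}s_\alpha(c_S-c_i)$. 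You instead take a single union bound over all admissible paths of length at most $n$ and obtain directly the \emph{stronger} inequality $\mathbb P(\dist_G(i,S)>n)\ge 1-\binom{N-|S|}{n}s_\alpha(c_S-c_i)$, then observe that raising the right-hand side to the $n$-th power only weakens it. Your route is more economical and makes clear that the exponent $n$ in the theorem is an artifact of presentation rather than of the argument; the paper's route, on the other hand, explains where that exponent comes from. Both treatments leave the domination of the path count by $\binom{N-|S|}{n}$ as an informal step valid in the regime of interest (small $|S|$, moderate $n$); you flag this explicitly, which is appropriate.
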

\begin{proof}
First note that, for any $x, y \in \mathbb R$, it holds $s_\alpha(x)s_\alpha(y)\leq s_\alpha(x+y)$. Now let $i$ be a node such that $i\notin S$ and let   $\pi_n(i,S)$ be the probability that there exists in $G$ a walk of length $n$ from $i$ to the set $S$. Clearly it holds  $\mathbb P(\dist_G(i,S)=n) \leq \pi_n(i,S)$, moreover, a not difficult computation shows that  
\begin{align*}
 \pi_n(i,S) \leq  \binom{N-|S|}{n} s_\alpha(c_S-c_i)\, . 
\end{align*} 
 It follows that the probability  $\mathbb P(\dist_G(i,S)> n)$  that a node $i\notin S$ is at least $n$ steps far from the set $S$ can be lower-bounded as follows
 \begin{align*}
     \mathbb P(\dist_G(i,S)> n) &= \mathbb P(\, \overline{\{\cup_{k=1}^n \dist_G(i,S)=k\}}\, ) \\
     &\geq \prod_{k=1}^n(1-\mathbb P(\dist_G(i,S)=k)) \geq  \prod_{k = 1}^{n} \Big(1-\pi_k(i,S)\Big)\, 
 \end{align*}
 where  $\overline \Omega$ denotes the complement of $\Omega$. % by $\Big(1 - \binom{N-|S|}{n} s_\alpha(c_S-c_i) \Big)^{n}$, concluding the proof.
\end{proof}

Suppose for simplicity that the set of perturbed edges in $\tilde G$  is a clique $\delta E=S\times S$. If the size of $S$ is small enough and $c_i\gg c_S$, that is the node $i$ is much  more relevant than any node in $S$, then Theorem \ref{pro:prob_bound} shows that the probability that $i$   is $n$ steps far from $S$ is large. We shall show in the forthcoming Section \ref{sec:main:results} that the absolute variation $|f(A)_{k\ell}-f(\tilde A)_{k\ell}|$ decays exponentially with $\dist_G$. Thus, as claimed, in a model where the edge distribution follows the preferential attachment law \eqref{eq:random_model},  it is expected that changes in the topology of edges involving nodes with small centrality do not affect the ranking of the leading nodes or edges.

\section{Faber polynomials}\label{sec:faber}
In this section we review  the definition of the Faber polynomials  and several of their fundamental properties. 
This family of polynomials extends the theory of power series to sets different from the disk 
and will be used in the next section for our main results.

Given a continuum with connected complement $\F$, let us consider
the relative conformal map $\phi$ which maps the exterior of $\F$ onto the exterior of the unitary disk $\{z\in \mathbb C: |z|>1\}$, satisfying the following conditions
$$ 
\phi(\infty) = \infty, \quad \lim_{z \rightarrow \infty} \frac{\phi(z)}{z} =  d > 0.
$$
Hence, $\phi$ can be expressed by a Laurent expansion
$\phi(z) = dz + a_0 + \frac{a_1}{z} + \frac{a_2}{z^2} + \dots$. 
Furthermore, for every $n >0$ we have
$$
 \left ( \phi(z) \right)^n = 
    d^n z^n + a_{n-1}^{(n)} z^{n-1} + \dots + a_{0}^{(n)} + \frac{a_{-1}^{(n)}}{z} + \frac{a_{-2}^{(n)}}{z^2} + \dots \, .
$$
The Faber polynomial for the domain $\F$ is defined by
(see, e.g., \cite{suetin})
$$ 
\Phi_n(z) = d z^n + a_{n-1}^{(n)} z^{n-1} + \dots + a_{0}^{(n)}, \quad \textrm{ for } n \geq 0. 
$$
If $f$ is analytic on $\F$ then it can be expanded in a series
of Faber polynomials over $\F$, namely

\begin{theorem}[\cite{suetin}] \label{thm:faber} Let $f$ be analytic on $\F$. Let $\phi$ be the conformal map sending the exterior of $\F$ onto the exterior of the unitary disk. Let  $\psi$ be the inverse of $\phi$ and let and $\Phi_j$ be the $j$-th Faber polynomial associated with $\phi$. Then
 \begin{equation}\label{eq:faber:expansion}
   f(z) = \sum_{j = 0}^\infty f_j \Phi_j(z), \quad \textrm{ for } z \in \F;
\end{equation}
with the coefficients $f_j$ being defined by
$$
 f_j = \frac{1}{2 \pi i} \int_{D} \frac{f(\psi(z))}{z^{\,j + 1}} \, \textrm{d}z \, ,
$$
where $D$ is the boundary of a neighborhood of the unit disc such that $f$ in $\Omega$ can be represented in terms of its Cauchy integral on $\psi(D)$.
\end{theorem}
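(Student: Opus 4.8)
The plan is to reproduce the classical argument behind \eqref{eq:faber:expansion}: represent $f$ by a Cauchy integral over a level curve of $\phi$ lying strictly between $\partial\F$ and the boundary of the region of analyticity of $f$, expand the Cauchy kernel into the Faber generating series, and then integrate term by term. For $r>1$ set $L_r:=\psi(\{|w|=r\})$, a Jordan curve (since $\psi$ is conformal on $\{|w|>1\}$) bounding a Jordan domain $G_r$ with $\F\subset G_r$ for every $r>1$ and $\bigcap_{r>1}\overline{G_r}=\F$; these are standard properties of the level curves of $\psi$. Since $f$ is analytic on an open set $U\supseteq\F$ and the compact sets $\overline{G_r}$ decrease to $\F\subset U$, a compactness argument provides some $r>1$ with $\overline{G_r}\subset U$. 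I take $D:=\{|w|=r\}$, so $\psi(D)=L_r\subset U$, and for any $z\in G_r$ — in particular for $z\in\F$ — Cauchy's formula together with the substitution $\zeta=\psi(w)$ gives
\[
  f(z)=\frac{1}{2\pi i}\oint_{L_r}\frac{f(\zeta)}{\zeta-z}\,\mathrm{d}\zeta
       =\frac{1}{2\pi i}\oint_{D}\frac{f(\psi(w))\,\psi'(w)}{\psi(w)-z}\,\mathrm{d}w .
\]

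The core of the proof is the Faber generating identity: for each fixed $z\in G_r$ and $|w|=r$,
\[
  \frac{\psi'(w)}{\psi(w)-z}=\sum_{j=0}^{\infty}\Phi_j(z)\,w^{-j-1},
\]
the series converging uniformly in $w$ on $D$. To prove it, note that for fixed $z\in G_r$ the left-hand side, as a function of $w$, is analytic on $\{|w|>1\}$ (on $\{|w|>|\phi(z)|\}$ if $z\notin\F$) and, since $\psi(w)/w\to 1/d$ and $\psi'(w)\to 1/d$ as $w\to\infty$, it is analytic at $\infty$ with value $0$; hence it admits there a Laurent expansion $\sum_{j\ge 0}c_j(z)\,w^{-j-1}$, with
\[
  c_j(z)=\frac{1}{2\pi i}\oint_{|w|=r}w^{j}\,\frac{\psi'(w)}{\psi(w)-z}\,\mathrm{d}w
         =\frac{1}{2\pi i}\oint_{L_r}\frac{\phi(\zeta)^{j}}{\zeta-z}\,\mathrm{d}\zeta ,
\]
the second equality following from the inverse substitution $w=\phi(\zeta)$. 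Splitting $\phi(\zeta)^{j}=\Phi_j(\zeta)+E_j(\zeta)$ into its polynomial part $\Phi_j$ and the remainder $E_j$ gathering the negative powers of $\zeta$, the polynomial term contributes $\Phi_j(z)$ by Cauchy's formula ($\Phi_j$ entire, $z\in G_r$), while $E_j$ is analytic on the closed complement of $G_r$ and vanishes at $\infty$, so by Cauchy's theorem on the exterior domain its integral contributes $0$. Thus $c_j(z)=\Phi_j(z)$, which proves the identity.

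It then remains to insert the generating series into the Cauchy integral for $f(z)$. Since the series converges uniformly for $w\in D$ and $f\circ\psi$ is bounded and continuous on $D$, term-by-term integration is legitimate and yields
\[
  f(z)=\sum_{j=0}^{\infty}\Phi_j(z)\cdot\frac{1}{2\pi i}\oint_{D}\frac{f(\psi(w))}{w^{\,j+1}}\,\mathrm{d}w
       =\sum_{j=0}^{\infty}f_j\,\Phi_j(z),
\]
which is exactly \eqref{eq:faber:expansion} with the stated coefficients $f_j$, the contour $D$ being the boundary of the neighbourhood $\{|w|<r\}$ of the unit disc. The same argument actually gives uniform convergence on each $\overline{G_\rho}$, $1\le\rho<r$, hence on $\F$.

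I expect the main obstacle to be the generating identity: justifying the Laurent expansion of the Cauchy kernel (its analyticity and order‑one decay at $\infty$), and the contour‑deformation step showing that the remainder $E_j$ integrates to zero, together with the uniform‑convergence bookkeeping needed for the interchange of sum and integral. By comparison, selecting the enclosing level curve from the hypothesis that $f$ is analytic on $\F$ is routine, relying only on the behaviour of the conformal map $\psi$ outside the unit circle.
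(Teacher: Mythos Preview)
The paper does not supply a proof of this theorem: it is quoted from Suetin's monograph with the citation \cite{suetin} and used as a black box. Your argument is correct and is precisely the classical proof one finds in that reference---Cauchy integral representation of $f$ over a level curve $L_r=\psi(\{|w|=r\})$, the Faber generating identity $\psi'(w)/(\psi(w)-z)=\sum_{j\ge 0}\Phi_j(z)\,w^{-j-1}$ obtained by reading off Laurent coefficients and splitting $\phi^j=\Phi_j+E_j$, and termwise integration.
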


It immediately follows from the above theorem that, if the spectrum of $A$ is contained in $\F$ and $f$ is a function analytic in $\F$, then
the matrix function $f(A)$ can be expanded as follows
(see, e.g., \cite[p.~272]{suetin})
\begin{equation}\label{eq.fA.expansion}
f(A) = \sum_{j = 0}^\infty f_j \Phi_j(A) .
\end{equation}

The field of values or numerical range of a matrix $A\in \mathbb C^{N\times N}$ is a convex and compact subset of $\mathbb C$ defined by
$$\mathcal F(A) = \{x^* A x : x \in \mathbb C^{N}, \, \|x\|_2=1\} \, .$$

The following theorem, proved by Beckermann in \cite[Theorem 1]{beckermann_2005}, will be particularly useful in the following section.
\begin{theorem}\label{thm:beckermann} 
Let $A$ be a square matrix and let $\F$ a convex set containing $\mathcal F(A)$. Then for every $n \geq 1$ it holds
   $$ \| \Phi_n(A) \| \leq 2\, , $$
    $\Phi_n$ being the $n$-th Faber polynomial for the domain $\F$, as previously defined.
\end{theorem}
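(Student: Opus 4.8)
We sketch the strategy of a proof; the result is due to Beckermann \cite{beckermann_2005}, and in the present paper we simply rely on it. The plan has three steps: (i) an integral representation of the matrix Faber polynomial $\Phi_n(A)$; (ii) the classical scalar bound $|\Phi_n|\le 2$ on convex domains; and (iii) the transfer of that bound to a general, possibly non-normal, matrix $A$ — which is where the real work lies.

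For step (i), the plan is to use the Cauchy integral formula for matrix functions: since $\Phi_n$ is entire, for any Jordan curve $\Gamma$ enclosing $\sigma(A)$ one has $\Phi_n(A)=\frac{1}{2\pi i}\oint_\Gamma\Phi_n(\zeta)(\zeta I-A)^{-1}\,d\zeta$. As $\sigma(A)\subseteq\mathcal F(A)\subseteq\F$, we may take $\Gamma=\partial\F$ (enlarging $\F$ by an arbitrarily small convex neighbourhood if $\mathcal F(A)$ reaches $\partial\F$, and using the continuous dependence of $\Phi_n$ on the domain); parametrising $\partial\F$ by $\zeta=\psi(e^{it})$ then gives
$$\Phi_n(A)=\frac{1}{2\pi}\int_0^{2\pi}\Phi_n\big(\psi(e^{it})\big)\,K(t)\,dt\,,\qquad K(t)=e^{it}\,\psi'(e^{it})\,\big(\psi(e^{it})I-A\big)^{-1}\,.$$
For normal $A$ this already finishes the proof, because then $\Phi_n(A)$ is unitarily diagonalisable with eigenvalues $\Phi_n(\lambda)$, $\lambda\in\sigma(A)\subseteq\F$, so only the scalar bound is needed.

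Step (ii), the scalar estimate, states that if $\F$ is convex then $|\Phi_n(z)|\le 2$ for every $z\in\overline{\F}$; this is classical (see \cite{suetin} and the references therein). Writing $\Phi_n(\psi(w))=w^n+r_n(w)$ for $|w|>1$, with $r_n$ analytic there and vanishing at $\infty$, convexity of $\F$ forces $|r_n|\le 1$ on $|w|=1$; hence $|\Phi_n(\psi(w))|\le 2$ on $|w|=1$, that is, on $\partial\F$, and the maximum principle extends the bound to $\overline{\F}$. The bound is attained when $\F$ degenerates to a line segment, where $\Phi_n=2T_n$ is twice a Chebyshev polynomial.

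Step (iii) is the hard part, and is really the substance of \cite{beckermann_2005}. Combining the first two steps only yields $\|\Phi_n(A)\|\le 2\cdot\frac{1}{2\pi}\int_0^{2\pi}\|K(t)\|\,dt$, which is too weak: already for a scalar $A$ close to $\partial\F$ the integral of $\|K\|$ exceeds $2$, so one has to exploit genuine cancellation in the oscillatory integral rather than estimate the integrand. The ingredients for this are the field-of-values resolvent bound — for $\zeta\notin\F$ there is a supporting half-plane $H\supseteq\F\supseteq\mathcal F(A)$ with $\zeta\notin H$, so that $\|(\zeta I-A)^{-1}\|\le 1/\mathrm{dist}(\zeta,\partial H)$ — the convexity of $\F$, and the oscillation of $\Phi_n$ (equivalently of $\phi^n$) along the contour, which one deforms slightly outside $\F$ onto a level curve $\{\,\zeta:|\phi(\zeta)|=R\,\}$ with $R>1$, where the resolvent is uniformly bounded; the interplay of these produces precisely the universal constant $2$, again sharp on a segment. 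We expect this cancellation-and-geometry estimate to be by far the main obstacle; we do not reproduce it here and refer to \cite{beckermann_2005}.
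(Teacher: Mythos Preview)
The paper does not prove this theorem at all: it is quoted as \cite[Theorem 1]{beckermann_2005} and used as a black box throughout Section~\ref{sec:main:results}. Your opening sentence already recognises this, so in that sense your ``proof'' and the paper's treatment coincide --- both defer to Beckermann.

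What you have added is an expository sketch of the ingredients behind Beckermann's argument. As an outline it is fair, and you are honest that step~(iii) is where the actual content lies and that you do not carry it out. Two minor corrections on the sketch itself: in step~(ii), the scalar bound $|\Phi_n|\le 2$ on convex domains is usually attributed to K\"ovari and Pommerenke rather than found directly in \cite{suetin}, and the justification that $|r_n|\le 1$ on $|w|=1$ is not as immediate as your one-line claim suggests --- it uses that for a convex $\F$ the map $\psi$ has $\Re(w\psi'(w)/\psi(w))>0$, which is what drives the estimate. In step~(iii), Beckermann's actual argument does not proceed by deforming to a level curve and exploiting oscillation of $\phi^n$; it instead writes $\Phi_n(A)$ via an explicit kernel involving $\psi'$ and bounds that kernel directly using the convexity-driven resolvent estimate you mention. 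But since the paper itself contains no proof to compare against, these are remarks on your exposition rather than on a discrepancy with the paper.
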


\section{Main results}\label{sec:main:results}
Consider a function $f:\mathbb C\to\mathbb C$ and let  $k,\ell$ be two nodes in $V$.  Assume that the adjacency matrix $A$ of $G=(V,E, \omega)$ is modified into the matrix $\tilde A = A+\delta A$ with associated graph $\tilde G$. As we discussed above,
we are interested in a-priori estimations of the absolute variation of the entries of $f(\tilde A)$ with respect to those of $f(A)$. 
To this end, in the  following Sections \ref{sec:main-bounds} and \ref{sec:stochastic-bounds}, we develop a number of explicit bounds of the form
\begin{equation}\label{eq:general_bound}
 |f(A)_{k\ell}-f(\tilde A)_{k\ell}| \leq \beta(\delta) \left( \frac{1}{M} \right)^{g(\delta)}, 
\end{equation}
where $\delta$ is a quantity that measures the distance in $G$ from $k$ to the set of modified edges  plus the distance from that set to $\ell$, $g(\delta)$ is an increasing linear function of $\delta$, 
$\beta(\delta) \rightarrow \beta > 0$ for $\delta \rightarrow +\infty$,
and, depending on the function $f$, $M$ %depends on the field of values of $A$ and $\tilde A$ and 
may be constant or may vary with $\delta$ as well.

It is worthwhile pointing out that the bounds we propose depend on the distances between nodes in $G$  and the field of values of $A$ and $\tilde A$, whereas no knowledge on the topology of $\tilde G$ is required. This is particularly important as it allows us to formulate a simple algorithm that exploits the computations needed for computing the $f$-centrality or $f$-communities scores to simultaneously compute (or approximate) the distances between node pairs in $G$ and thus, for each node $k$ or pair of nodes $k$ and $\ell$, identifying via \eqref{eq:general_bound} the subgraphs of $G$ whose change in the edge topology do not affect (or affect in minor part) the score $f(A)_{k\ell}$.

\subsection{Upper bounds on network indices' stability}\label{sec:main-bounds}
In order to derive our bounds for the stability of $f(A)_{k\ell}$ we employ the theory of Faber polynomials briefly discussed in Section \ref{sec:faber}. This family of polynomials have been used for the analysis of the decay of the elements of functions of banded non-Hermitian matrices  \cite{benzi_boito_2014, benzi_razouk}. Based on those results and following the developments given in \cite{pozza.simoncini.2016} we derive our main Theorem \ref{thm.decay.tau} and its corollaries. To this end, we first need the following lemmas.
\begin{lemma}\label{lemma:Apow}
   Let $G=(V,E,\omega)$ be a graph and $A \in \mathbb{R}_{+}^{N\times N}$ be its adjacency matrix. 
   Consider the graph $\tilde G$, with adjacency matrix $\tilde A$,
   obtained by adding, erasing, or modifying the weights of the edges contained in $\delta E \subset V \times V$.
   If $S=\{s | (s,t) \in \delta E\}$ and $T=\{t | (s,t) \in \delta E\}$ are
   respectively the sets of sources and tips of $\delta E$, then 
   $$ ( p_n(\tilde A) )_{k \ell}  =  ( p_n(A) )_{k \ell}, $$
   for every polynomial $p_n$ of degree $n \leq \dist_G(k,S) + \dist_G(T,\ell) $.   
\end{lemma}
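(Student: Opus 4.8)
The plan is to reduce the claim to the monomial case and then exploit the combinatorial walk-counting identity \eqref{eq:matrix:power}. Since every polynomial $p_n$ of degree $n$ is a linear combination of the monomials $1, z, z^2, \dots, z^n$, it suffices by linearity to prove that $(\tilde A^m)_{k\ell} = (A^m)_{k\ell}$ for every $m \le \dist_G(k,S) + \dist_G(T,\ell)$. So first I would fix such an $m$ and aim to compare the two matrix powers entry-wise at position $(k,\ell)$.

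The key step is a walk-splitting argument. Expanding $(\tilde A^m)_{k\ell} = \sum \tilde a_{k u_1}\tilde a_{u_1 u_2}\cdots \tilde a_{u_{m-1}\ell}$ over all sequences of nodes, and similarly for $A$, I would compare the two sums term by term. Since $\tilde A$ and $A$ differ only in the entries indexed by $\delta E$, a summand differs between the two expansions only if the corresponding walk uses at least one edge of $\delta E$. So I would show that no walk of length $m$ from $k$ to $\ell$ can use an edge of $\delta E$. Indeed, if such a walk used an edge $(s,t)\in\delta E$, then the walk would contain a sub-walk from $k$ to $s$ and a sub-walk from $t$ to $\ell$; by definition of graph distance and of $\dist_G(\cdot, \cdot)$ applied to the sets $S$ and $T$, the first has length at least $\dist_G(k,S)$ and the second length at least $\dist_G(T,\ell)$, so together with the traversed edge $(s,t)$ the total length would be at least $\dist_G(k,S) + 1 + \dist_G(T,\ell) > m$, a contradiction. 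Here I should be slightly careful: the sub-walks must be walks in $G$, not in $\tilde G$; but any walk in $\tilde G$ that stays within the edges of $G$ is automatically a walk in $G$, and since we are looking at the \emph{first} occurrence of a $\delta E$-edge, the initial segment from $k$ up to the source $s$ uses only edges of $\tilde G \setminus \delta E \subseteq E$, hence is a genuine walk in $G$; symmetrically one peels off a $\delta E$-free terminal segment ending at some tip $t$. A clean way to phrase this is: take the first edge of the walk lying in $\delta E$, say it is the $i$-th edge and goes from $s$ to $t$; then edges $1,\dots,i-1$ form a walk in $G$ from $k$ to $s$ (so $i-1 \ge \dist_G(k,s) \ge \dist_G(k,S)$), and edges $i+1,\dots,m$ form a walk in $\tilde G$ from $t$ to $\ell$ whose first $\delta E$-edge, if any, we again peel off, ultimately showing the tail contains a $G$-walk from some tip to $\ell$ of length at least $\dist_G(T,\ell)$.

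Once it is established that every length-$m$ walk contributing to either expansion avoids $\delta E$, the corresponding summands coincide ($\tilde a_{uv} = a_{uv}$ for all edges $uv \notin \delta E$), and moreover the set of such walks is the same for $G$ and $\tilde G$ (it is exactly the set of walks in $G$ avoiding $\delta E$, which equals the set of walks in $\tilde G$ avoiding $\delta E$ since $\tilde E \setminus \delta E = E \setminus \delta E$). Hence $(\tilde A^m)_{k\ell} = (A^m)_{k\ell}$, and summing over $m = 0, \dots, n$ with the polynomial coefficients gives $(p_n(\tilde A))_{k\ell} = (p_n(A))_{k\ell}$. I expect the main (minor) obstacle to be the bookkeeping in the ``peel off the first bad edge'' step — making sure the initial and terminal segments are interpreted as walks in the right graph and that the length accounting, including the $+1$ for the bad edge itself, is airtight; everything else is routine once linearity and \eqref{eq:matrix:power} are in place.
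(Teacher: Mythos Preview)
Your proposal is correct and follows essentially the same approach as the paper: reduce to monomials by linearity, interpret $(A^m)_{k\ell}$ as a weighted walk count, and observe that any walk of length $m\le \dist_G(k,S)+\dist_G(T,\ell)$ from $k$ to $\ell$ cannot traverse an edge of $\delta E$ because doing so would force length at least $\dist_G(k,S)+\dist_G(T,\ell)+1$. Your ``peel off the first bad edge'' bookkeeping is in fact more careful than the paper's terse argument about which graph the sub-walks live in, but the underlying idea is identical.
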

\begin{proof}
 We prove it for the monomials $( \tilde A^n )_{k \ell}  =  ( A^n )_{k \ell}$ concluding then by linearity.
 Since $( A^n )_{k \ell}$ is the weighted number of walks from $k$ to $\ell$ of length $n$,
 $( \tilde A^n )_{k \ell}  =  ( A^n )_{k \ell}$
 whenever $G$ and $\tilde G$ have the same walks of length $n$ from $k$ to $\ell$.
 Furthermore, a modified walk from $k$ to $\ell$ in $\tilde G$ must contain at least an edge from $S$ to $T$.
 We conclude noticing that any walk from $k$ to $\ell$ passing through $S$ and $T$ has length greater or equal to $\dist_G(k,S) + \dist_G(T,\ell) + 1$.
 \end{proof}

\begin{lemma}\label{thm.faber.sum}
   Consider the assumptions of Lemma \ref{lemma:Apow}.
   Moreover, let $\F$ be a convex continuum containing $\mathcal F(A)$ and $\mathcal F(\tilde A)$.
   If $f$ is an analytic function on $\F$ 
   and $f(z) = \sum_{j = 0}^\infty f_j \Phi_j(z)$ is its Faber expansion (\ref{eq:faber:expansion}) for the domain $\Omega$,
   then 
   $$\left| \left( f(A) -  f(\tilde A) \right)_{k \ell} \right | \leq 4 \sum_{j = \delta+1}^\infty |f_j|, $$
   where $\delta = \dist_G(k,S) + \dist_G(T,\ell) $.
\end{lemma}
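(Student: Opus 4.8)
The plan is to expand both $f(A)$ and $f(\tilde A)$ in their Faber series, subtract them, and exploit Lemma~\ref{lemma:Apow} to annihilate all the low-degree contributions, leaving a tail whose terms are controlled uniformly by Theorem~\ref{thm:beckermann}. First I would invoke \eqref{eq.fA.expansion}: since $\F$ is a convex continuum containing $\mathcal F(A)$ and $\mathcal F(\tilde A)$, it contains in particular the spectra of $A$ and $\tilde A$, and $f$ is analytic on $\F$, so Theorem~\ref{thm:faber} gives the norm-convergent expansions $f(A)=\sum_{j\ge 0} f_j \Phi_j(A)$ and $f(\tilde A)=\sum_{j\ge 0} f_j \Phi_j(\tilde A)$ with the same coefficients $f_j$. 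Extracting the $(k,\ell)$ entry is a bounded linear functional, since $|(M)_{k\ell}|=|e_k^{*} M e_\ell|\le \|M\|$ for any matrix $M$, so the two series may be subtracted termwise:
\[
\bigl(f(A)-f(\tilde A)\bigr)_{k\ell}=\sum_{j=0}^\infty f_j\,\bigl(\Phi_j(A)-\Phi_j(\tilde A)\bigr)_{k\ell}.
\]

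Next comes the key cancellation. Each Faber polynomial $\Phi_j$ has degree $j$, so Lemma~\ref{lemma:Apow} applies whenever $j\le \dist_G(k,S)+\dist_G(T,\ell)=\delta$ and yields $\bigl(\Phi_j(A)\bigr)_{k\ell}=\bigl(\Phi_j(\tilde A)\bigr)_{k\ell}$ for all such $j$. Hence every term with $j\le\delta$ drops out and
\[
\bigl(f(A)-f(\tilde A)\bigr)_{k\ell}=\sum_{j=\delta+1}^\infty f_j\,\bigl(\Phi_j(A)-\Phi_j(\tilde A)\bigr)_{k\ell}.
\]
For the remaining terms I would use that $j\ge\delta+1\ge1$ together with Theorem~\ref{thm:beckermann}, which is applicable because $\F$ is convex and contains both $\mathcal F(A)$ and $\mathcal F(\tilde A)$: this gives $\bigl|(\Phi_j(A))_{k\ell}\bigr|\le\|\Phi_j(A)\|\le 2$ and likewise $\bigl|(\Phi_j(\tilde A))_{k\ell}\bigr|\le 2$, so $\bigl|(\Phi_j(A)-\Phi_j(\tilde A))_{k\ell}\bigr|\le 4$. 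The triangle inequality then delivers
\[
\bigl|\bigl(f(A)-f(\tilde A)\bigr)_{k\ell}\bigr|\le 4\sum_{j=\delta+1}^\infty|f_j|,
\]
which is exactly the claimed estimate.

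The step requiring the most care is the justification that the Faber expansions may be handled entrywise and termwise — that is, that norm convergence of $\sum_j f_j\Phi_j(A)$ and $\sum_j f_j\Phi_j(\tilde A)$ legitimates pulling the $(k,\ell)$-entry functional inside the sums and forming their difference term by term. This relies precisely on the analyticity of $f$ on $\F$ and on $\F$ containing the fields of values (hence the spectra), via Theorem~\ref{thm:faber} and \eqref{eq.fA.expansion}; once this is secured, the rest is the purely combinatorial cancellation supplied by Lemma~\ref{lemma:Apow} and the uniform bound supplied by Theorem~\ref{thm:beckermann}.
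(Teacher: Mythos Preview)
Your proof is correct and follows essentially the same approach as the paper: expand $f(A)$ and $f(\tilde A)$ in the common Faber series, use Lemma~\ref{lemma:Apow} to kill the terms with $j\le\delta$, and bound the tail via Theorem~\ref{thm:beckermann}. If anything, you are slightly more explicit than the paper in justifying the termwise manipulation of the $(k,\ell)$ entry.
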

\begin{proof}
  Since the coefficients $f_j$ depend only on the set $\F$ and the function $f$
  they are the same for both the expansions \eqref{eq.fA.expansion} of $f(A)$ and $f(\tilde A)$.
  Therefore, \eqref{eq.fA.expansion} gives
  $$f(A) -  f(\tilde A) =  \sum_{j = 0}^\infty f_j (\Phi_j(A) - \Phi_j(\tilde A)).$$
  By Lemma \ref{lemma:Apow} we get
  $$ \left( \Phi_j(A) - \Phi_j(\tilde A) \right)_{k \ell} = 0 \quad \textrm{ for every } j \leq \, \delta.$$
  Thus 
  $$\left( f(A) -  f(\tilde A) \right)_{k \ell} =  \sum_{j = \delta+1}^\infty f_j \left(\Phi_j(A) - \Phi_j(\tilde A)\right)_{k \ell}.$$
  Since $\F$ is convex, Theorem \ref{thm:beckermann} concludes the proof. 
\end{proof}

The previous lemma allows us to obtain the claimed exponential decay bound on the absolute variation of the entries of $f(A)$ and $f(\tilde A)$.

\begin{theorem}\label{thm.decay.tau}
%   Let $\Omega$ be a convex continuum containing $\mathcal F(A)$ and $\mathcal F(\tilde A)$. % and let $f$ be analytic in $\F$ ??. 
%   Given a $\tau > 1$ let $D=D_\tau=\{z:|z|=\tau\}$ and $\psi$ be as in the assumptions of Theorem \ref{thm:faber}.
Let $\Omega$ be a convex continuum containing $\mathcal F(A)$ and $\mathcal F(\tilde A)$, and let $\phi$ and $\psi$ be as in the assumptions of Theorem \ref{thm:faber}.
Given $\tau > 1$, if $f$ is analytic in the level set $G_\tau$ 
defined as the complement of the set  $\{ \psi(z) : |z| > \tau \}$,
   then 
   $$ 
      \left| \left( f(A) -  f(\tilde A) \right)_{k \ell} \right | \leq 
      \mu_\tau(f) \frac 2 \pi   \frac{\tau}{\tau -1} \left ( \frac{1}{\tau} \right)^{\delta + 2},
   $$
   where  $\delta =  \dist_G(k,S) + \dist_G(T,\ell)$ and 
$$
\mu_\tau(f) = \int_{D_\tau}|f(\psi(z))|\, \mathrm{d}z\, ,
$$
with $D_\tau = \{ z: |z| = \tau\}$.
\end{theorem}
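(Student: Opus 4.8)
The plan is to combine the Faber-coefficient bound of Lemma~\ref{thm.faber.sum} with a Cauchy estimate for the coefficients $f_j$, obtained by evaluating the defining contour integral on the level curve $D_\tau=\{z:|z|=\tau\}$, and then to sum the resulting geometric tail.

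First I would check that Lemma~\ref{thm.faber.sum} applies. Since $\tau>1$ and $\psi$ maps $\{|z|>1\}$ conformally onto the exterior of $\Omega$, the set $\{\psi(z):|z|>\tau\}$ lies in the exterior of $\Omega$, so the level set $G_\tau$ contains $\Omega$; in particular $f$ is analytic on the convex continuum $\Omega\supseteq\mathcal F(A)\cup\mathcal F(\tilde A)$. Lemma~\ref{thm.faber.sum} then yields, with $\delta=\dist_G(k,S)+\dist_G(T,\ell)$ and $f_j$ the Faber coefficients of $f$ relative to $\Omega$,
\[
\left|\left(f(A)-f(\tilde A)\right)_{k\ell}\right|\;\le\;4\sum_{j=\delta+1}^{\infty}|f_j|.
\]

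Next I would estimate $|f_j|$. By Theorem~\ref{thm:faber} one has $f_j=\frac{1}{2\pi i}\int_{D}\frac{f(\psi(z))}{z^{j+1}}\,\mathrm{d}z$ for a contour $D$ around the unit disc lying in the region where $z\mapsto f(\psi(z))$ is analytic and $f$ is represented by its Cauchy integral on $\psi(D)$. Because $\psi$ is injective on $\{|z|>1\}$, the image $\{\psi(z):1<|z|\le\tau\}$ is disjoint from $\{\psi(z):|z|>\tau\}$ and hence contained in $G_\tau$; thus $f\circ\psi$ is analytic on the closed annulus $1\le|z|\le\tau$ and, as the factor $z^{-(j+1)}$ is analytic for $z\ne0$, Cauchy's theorem lets me take $D=D_\tau$ without changing the value of $f_j$. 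Estimating the resulting integral over the circle of radius $\tau$ gives
\[
|f_j|\;\le\;\frac{1}{2\pi}\int_{D_\tau}\frac{|f(\psi(z))|}{|z|^{j+1}}\,\mathrm{d}z\;=\;\frac{1}{2\pi\,\tau^{j+1}}\int_{D_\tau}|f(\psi(z))|\,\mathrm{d}z\;=\;\frac{\mu_\tau(f)}{2\pi\,\tau^{j+1}}.
\]

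Finally I would substitute this into the tail of the first display and sum the geometric series, using $\sum_{j\ge\delta+1}\tau^{-(j+1)}=\frac{\tau}{\tau-1}\,\tau^{-(\delta+2)}$, to conclude
\[
\left|\left(f(A)-f(\tilde A)\right)_{k\ell}\right|\;\le\;\frac{2}{\pi}\,\mu_\tau(f)\,\frac{\tau}{\tau-1}\left(\frac{1}{\tau}\right)^{\delta+2},
\]
which is the assertion. The main obstacle is the contour step: one has to verify that $f\circ\psi$ is analytic throughout the closed annulus $1\le|z|\le\tau$ — equivalently that $\{\psi(z):1<|z|\le\tau\}\subseteq G_\tau$, which rests on the injectivity of $\psi$ — so that the Faber coefficient $f_j$ may legitimately be computed from the contour $D_\tau$. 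Everything else is a routine Cauchy estimate and a geometric-series summation.
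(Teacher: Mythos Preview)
Your proof is correct and follows essentially the same approach as the paper: apply Lemma~\ref{thm.faber.sum}, bound $|f_j|$ via the contour integral on $D_\tau$ to get $|f_j|\le \mu_\tau(f)/(2\pi\tau^{j+1})$, and sum the geometric tail. Your write-up is in fact slightly more careful than the paper's, since you explicitly justify both that $f$ is analytic on $\Omega$ (so Lemma~\ref{thm.faber.sum} applies) and that the contour may be deformed to $D_\tau$.
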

\begin{proof} 
By Lemma~\ref{thm.faber.sum} we get 
$\displaystyle \left| \left( f(A) -  f(\tilde A) \right)_{k \ell} \right | \leq 4 \sum_{j = \delta+1}^\infty |f_j|$.
 The Faber coefficients $f_j$ are given by 
$$
 f_j = \frac{1}{2 \pi i} \int_{D_\tau} \frac{f(\psi(z))}{z^{j + 1}} \, \textrm{d}z.
$$
Thus $|f_j| \leq \frac 1 {2\pi \tau^{j+1}} \mu_\tau(f)$, 
and
\begin{eqnarray*}
     \left| \left( f(A) -  f(\tilde A) \right)_{k \ell} \right |
	     &\leq&	
\mu_\tau(f) \frac 2 \pi   \sum_{j = \delta + 1}^\infty \left ( \frac{1}{\tau} \right)^{j+1} \\
			       & = &    
\mu_\tau(f) \frac 2 \pi \left ( \frac{1}{\tau} \right)^{\delta + 2} \sum_{j = 0}^\infty \left ( \frac{1}{\tau} \right)^j \\
			       & = &    
\mu_\tau(f) \frac 2 \pi \frac{\tau}{\tau - 1}  \left ( \frac{1}{\tau} \right)^{\delta + 2} \end{eqnarray*}
concluding the proof.
\end{proof}

The theorem above shows that 
if $k$ is distant from $S$, or $T$ is distant from $\ell$,
then $f(\tilde A)_{k\ell}$ is close to $f(A)_{k\ell}$.
Moreover, the difference between the two values decreases exponentially in $\delta = \dist_G(k,S) + \dist_G(T,\ell)$.
As a limit case, if the considered graph is not connected and there is no walk either from $k$ to $m$ or from $n$ to $\ell$,
then  $\delta = +\infty$ and we deduce $f(\tilde A)_{k\ell} = f(A)_{k\ell}$.

In order to obtain a sharp bound in Theorem~\ref{thm.decay.tau} 
we need to choose $\tau$ appropriately.
This choice clearly depends on the trade-off between $\mu_\tau(f)$, that is the possibly ``large size'' of $f$ on the given
region, and the exponential decay of $(1/\tau)^{\delta +2}$.
Hence, Theorem \ref{thm.decay.tau} produces a family of bounds depending on $\tau$, $f$ and the fields of values $\mathcal F(A), \mathcal F(\widetilde{A})$. %the considered problem.
	
As we discussed in Section \ref{sec:network-things}, the exponential and the resolvent functions \eqref{eq:exp_and_res} play a  central role for $f$-centrality and $f$-communicability problems in the complex networks literature \cite{benzi2013ranking,estrada2010network,estrada2005subgraph}. For this reason in what follows we focus on these two functions and derive more precise bounds when $f(x)$ is either $\exp(x)$ or $r_\alpha(x)$.

We will use the symbol $\Re(z)$ to denote the real part of the complex number $z$. 
\begin{corollary}\label{cor.exp}
   Let $A, \tilde A, S$ and $T$ be as in Lemma \ref{lemma:Apow},
   $\F$ be a set containing $\mathcal F(A)$ and $\mathcal F(\tilde A)$,
   and $\delta = \dist_G(k,S) + \dist_G(T,\ell)$.
   
   If the boundary of $\F$ is a horizontal ellipse with semi-axes  $a \geq b > 0$ and center $c$,
   and  $\delta > b - 1$ then
$$ 
   \left| \left( \exp(A) -  \exp(\tilde A) \right)_{k \ell} \right |     \leq 
 		 \frac{4 e^{\Re(c)} p(\delta +1)}{p(\delta + 1) - (a+b)/(\delta + 1)} 
			\left( \frac{a+b}{\delta + 1}\frac{e^{q(\delta+1)}}{p(\delta+1)}\right)^{\delta+1}, 			
   $$ 
    with 
    $q(t) = 1 + \frac{a^2 - b^2}{t^2 + t\sqrt{t^2 + a^2 - b^2}}$ and 
    $p(t) = 1 + \sqrt{1 + (a^2 - b^2)/t^2} $.
    Moreover, for the special cases where $\F$ is either a disk or a line segment, we have:
  \begin{enumerate}  
\item    If $\F$ is a disk of radius  $a$ and center $c$,
   and  $\delta  > a - 1$ then
   $$ 
   \left| \left( \exp(A) -  \exp(\tilde A) \right)_{k \ell} \right |     \leq 
 		4 e^{\Re(c)}  \frac{(\delta + 1)}{\delta + 1 - a} 
			\left( \frac{a e}{\delta + 1}\right)^{\delta+1}.			
   $$ 
   
\item    If $\F$ is a real interval $[c-a,c+a]$ (with $a>0$),
    then for every  $\delta > 0$
   \begin{equation*}
   \left| \left( \exp(A) -  \exp(\tilde A) \right)_{k \ell} \right |     \leq 
 		  \frac{4 e^{c} p(\delta +1)}{p(\delta + 1) - a/(\delta + 1)} 
			\left( \frac{a}{\delta + 1}\frac{e^{q(\delta+1)}}{p(\delta+1)}\right)^{\delta+1}, 			
\end{equation*}
    with 
    $q(t) = 1 + \frac{a^2}{t^2 + t\sqrt{t^2 + a^2}}$ and 
    $p(t) = 1 + \sqrt{1 + (a/t)^2} $.
\end{enumerate}

\end{corollary}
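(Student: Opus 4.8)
The plan is to derive all three estimates from Theorem~\ref{thm.decay.tau}, whose analyticity hypothesis is automatic here since $\exp$ is entire, hence analytic in every level set $G_\tau$. For a fixed $\tau>1$ the theorem gives
\[
\bigl|(\exp(A)-\exp(\tilde A))_{k\ell}\bigr|\;\le\;\mu_\tau(\exp)\,\frac{2}{\pi}\,\frac{\tau}{\tau-1}\Bigl(\frac{1}{\tau}\Bigr)^{\delta+2},
\]
so the task reduces to estimating $\mu_\tau(\exp)=\tau\int_0^{2\pi}\bigl|\exp(\psi(\tau e^{i\theta}))\bigr|\,\mathrm{d}\theta$ and then choosing $\tau$ to balance it against the decay $\tau^{-\delta-2}$. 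For the ellipse I would use the explicit conformal map: if $\partial\F$ is the horizontal ellipse with semi-axes $a\ge b>0$ and centre $c$, then $\psi(w)=c+\tfrac{a+b}{2}w+\tfrac{a-b}{2}w^{-1}$ maps the exterior of the unit disk onto the exterior of $\F$, so on $D_\tau$ one has $\Re\psi(\tau e^{i\theta})=\Re(c)+R_\tau\cos\theta$ with $R_\tau=\tfrac{(a+b)\tau}{2}+\tfrac{a-b}{2\tau}\ge 0$. Since $|\exp(w)|=e^{\Re w}$ and $\cos\theta\le 1$, this gives $\mu_\tau(\exp)\le 2\pi\tau\, e^{\Re(c)+R_\tau}$, and substituting it into the inequality above collapses the constants to
\[
\bigl|(\exp(A)-\exp(\tilde A))_{k\ell}\bigr|\;\le\;4\,e^{\Re(c)}\,\frac{\tau}{\tau-1}\,\frac{e^{R_\tau}}{\tau^{\delta+1}}\qquad(\tau>1).
\]

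The second step is the optimization in $\tau$. Treating the slowly varying factor $\tfrac{\tau}{\tau-1}$ as fixed, I would minimize $\tau\mapsto R_\tau-(\delta+1)\log\tau$; this map is strictly convex and its stationarity condition is the quadratic $(a+b)\tau^2-2(\delta+1)\tau-(a-b)=0$, whose positive root is
\[
\tau^*=\frac{(\delta+1)+\sqrt{(\delta+1)^2+a^2-b^2}}{a+b}=\frac{(\delta+1)\,p(\delta+1)}{a+b}.
\]
A short computation then shows $\tau^*>1$ precisely when $\delta>b-1$, i.e.\ exactly under the stated hypothesis, so $\tau^*$ is admissible and $\tfrac{\tau^*}{\tau^*-1}>0$.

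The last step is to insert $\tau=\tau^*$ and simplify, which rests on three elementary identities: $1/\tau^*=\tfrac{a+b}{(\delta+1)p(\delta+1)}$;\ $\tfrac{\tau^*}{\tau^*-1}=\tfrac{p(\delta+1)}{p(\delta+1)-(a+b)/(\delta+1)}$;\ and, crucially, $R_{\tau^*}=\sqrt{(\delta+1)^2+a^2-b^2}=(\delta+1)\,q(\delta+1)$. Writing $t=\delta+1$ and $r=\sqrt{t^2+a^2-b^2}$, the last follows from $\tau^*=(t+r)/(a+b)$, the factorization $a^2-b^2=(r-t)(r+t)$, and the cancellation $\tfrac{(a+b)\tau^*}{2}+\tfrac{a-b}{2\tau^*}=\tfrac{t+r}{2}+\tfrac{r-t}{2}=r$, together with $t^2+t\,r=t^2 p(t)$ for the second form of $q$. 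Combining the three identities yields $e^{R_{\tau^*}}/(\tau^*)^{\delta+1}=\bigl(\tfrac{a+b}{\delta+1}\,\tfrac{e^{q(\delta+1)}}{p(\delta+1)}\bigr)^{\delta+1}$, which is exactly the power in the claimed ellipse bound. The two listed special cases are the degenerate limits $b=a$ (disk, where $p\equiv 2$, $q\equiv 1$) and $b=0$ (segment, where $\Re(c)=c$ and the condition $\delta>b-1$ is vacuous) of the ellipse formula; for the disk one may also rerun the argument directly with $\psi(w)=c+aw$, $R_\tau=a\tau$, optimal $\tau^*=(\delta+1)/a$, admissibility $\delta>a-1$, and $e^{a\tau^*}=e^{\delta+1}$.

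I expect the only genuine obstacle to be the bookkeeping of the last step --- verifying the closed form $R_{\tau^*}=(\delta+1)\,q(\delta+1)$ and confirming that the unconstrained minimizer $\tau^*$ lies in $(1,\infty)$ exactly when $\delta>b-1$, which is precisely where that hypothesis is needed. Everything else --- the application of Theorem~\ref{thm.decay.tau}, the elementary estimate $\mu_\tau(\exp)\le 2\pi\tau\,e^{\Re(c)+R_\tau}$, and the convexity argument for the minimizer --- is routine.
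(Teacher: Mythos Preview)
Your proposal is correct and follows essentially the same route as the paper: apply Theorem~\ref{thm.decay.tau} with the explicit Joukowski-type inverse map for the ellipse, bound $\mu_\tau(\exp)\le 2\pi\tau\,e^{\Re(c)+R_\tau}$, minimize the resulting expression in $\tau$ (ignoring the slowly varying $\tau/(\tau-1)$), verify that the minimizer satisfies $\tau^*>1$ exactly when $\delta>b-1$, and then reduce the disk and interval cases to $b=a$ and $b\to 0$ respectively. The only cosmetic difference is that the paper parametrizes $\psi$ via $\rho=\sqrt{a^2-b^2}$ and $R=(a+b)/\rho$, whereas you write it directly in terms of $a$ and $b$; your presentation also makes the key identity $R_{\tau^*}=\sqrt{(\delta+1)^2+a^2-b^2}$ more explicit than the paper does.
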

\noindent Notice that for $t$ large enough $p(t) \approx 2$ and $q(t) \approx 1$.
\begin{proof}
 We begin with the case of  $\F$ with boundary an horizontal ellipse.
 A conformal map for $\F$ is 
 \begin{equation}\label{eq:phi:ellipse}
   \phi(w) = \frac{w - c - \sqrt{(w-c)^2 - \rho^2 }}{\rho R},
 \end{equation}
 and its inverse is
 \begin{equation}\label{eqn-psi-ellipse}
    \psi(z) = \frac{\rho}{2}\left ( Rz + \frac{1}{Rz} \right) + c, 
 \end{equation}
with $\rho = \sqrt{a^2 - b^2}$ and $R = (a + b)/\rho$;
see, e.g., \cite[chapter II, Example 3]{suetin}.
Notice that
 $$ 
\max_{|z| = \tau} |e^{\psi(z)}| = \max_{|z| = \tau} e^{\Re(\psi(z))} = 
e^{\frac{\rho}{2}\left ( R\tau + \frac{1}{R\tau} \right) + \Re{(c)}}.
$$ 
For every $\tau > 1$, by Theorem \ref{thm.decay.tau}, we get
$$ \left| \left( \exp(A) -  \exp(\tilde A) \right)_{k \ell} \right |     \leq 
			4 \frac{\tau}{\tau - 1} e^{\Re{(c)}} e^{\frac{\rho}{2}\left ( R\tau + \frac{1}{R\tau} \right)} \left( \frac{1}{\tau} \right)^{\delta + 1},$$
where we used $\mu_\tau(\exp) \leq 2 \pi \tau\max_{|z|=\tau}|e^{\psi(z)}|$.
The optimal value of $\tau$ which minimizes
 $ e^{\frac{\rho}{2}\left ( R\tau + \frac{1}{R\tau} \right)} \left( \frac{1}{\tau} \right)^{\delta + 1}$ is
$$\tau_* = \frac{\delta + 1 + \sqrt{(\delta + 1)^2 + \rho^2 }}{\rho R} \,,$$
and we have $\tau_*>1$. In fact, since $\delta + 1 > \frac{\rho}{2}\left(R - \frac{1}{R} \right) = b$ holds by assumption, we get 
$$
\tau_* = \frac{\delta + 1 + \sqrt{(\delta + 1)^2 + \rho^2 }}{\rho R} > \frac{b + \sqrt{b^2 + \rho^2 }}{\rho R} = \frac{b+a}{\rho R}=1\, .
$$
%which is admissible only if $\tau >1$.   Since $\tau >1$ is satisfied if and only if $\delta + 1 > \frac{\rho}{2}\left(R - \frac{1}{R} \right) = b$, the condition is satisfied by assumption.
Finally, noticing that
$$ \frac{\rho}{2}\left ( R\tau_* + \frac{1}{R\tau_*} \right) = (\delta + 1) q(\delta + 1),  $$
the proof is completed for the ellipse case. 

The case in which $\F$ is a disk is easily obtained by setting $b = a$, 
while the case $\F = [c -a, c + a]$ can be proved considering an ellipse of center $c$, major axis $a$, minor axis any $b > 0$,
and then letting $b \rightarrow 0$ in the bound for the ellipse case.
\end{proof}

Similarly, we can derive a bound for the resolvent $r_\alpha(A) = (I - \alpha A)^{-1}$.
In this case, the function $r_\alpha(z)$ is not analytic in the whole complex plane.
This property has crucial effects in the approximation, 
as the subsequent corollary shows.
\begin{corollary}\label{cor.res}
    %Under the assumptions of corollary \ref{cor.exp}, 
   Let $A, \tilde A, S$ and $T$ be as in Lemma \ref{lemma:Apow},
   $\F$ be a set symmetric with respect to the real axis and  containing $\mathcal F(A)$ and $\mathcal F(\tilde A)$,
   $\delta = \dist_G(k,S) + \dist_G(T,\ell)$,
   and $r_\alpha(x)$ be defined as in \eqref{eq:exp_and_res}
   with $\alpha >0$ such that $\alpha^{-1} \notin \F$.
   
   If the boundary of $\F$ is a horizontal ellipse with semi-axes  $a \geq b > 0$ and center $c$,
   then for $0 < \varepsilon < |\alpha^{-1} - c| - a$ and  $\delta > 0$ it holds
   $$ 
     \left| \left( r_\alpha(A) -  r_\alpha(\tilde A) \right)_{k\ell} \right |     \leq
 		\frac{4 }{1  - \frac{a+b}{(|\alpha^{-1} - c|  - \varepsilon)p_\varepsilon}}  \frac{1}{\varepsilon}
			\left( \frac{a+b}{|\alpha^{-1} - c|  - \varepsilon} \frac{1}{p_\varepsilon} \right)^{\delta + 1},
   $$ 
    where $p_\varepsilon = 1 + \sqrt{1 - (a^2 - b^2)/(|\alpha^{-1} - c|  - \varepsilon)^2}$.
    Moreover, for the special cases where $\F$ is either a disk or a line segment, we have:
    \begin{enumerate}
    \item If $\F$ is a disk of radius  $a$ and center $c$,
    then for $0 < \varepsilon < |\alpha^{-1} - c| - a$  and  $\delta > 0$ it holds
       $$ 
     \left| \left( r_\alpha(A) -  r_\alpha(\tilde A) \right)_{k\ell} \right |     \leq
 		\frac{4 }{1  - \frac{a}{(|\alpha^{-1} - c|  - \varepsilon)}}  \frac{1}{\varepsilon}
			\left( \frac{a}{|\alpha^{-1} - c|  - \varepsilon} \right)^{\delta + 1} .
   $$ 

    \item If $\F$ is a real subinterval $[c-a,c+a]$ (with $a>0$),
    then for $0 < \varepsilon < |\alpha^{-1} - c| - a$  and  $\delta > 0$ it holds
    \begin{equation*}
     \left| \left( r_\alpha(A) -  r_\alpha(\tilde A) \right)_{k\ell} \right |     \leq
 		\frac{4 }{1  - \frac{a}{(|\alpha^{-1} - c|  - \varepsilon)p_\varepsilon}}  \frac{1}{\varepsilon}
			\left( \frac{a}{|\alpha^{-1} - c|  - \varepsilon} \frac{1}{p_\varepsilon} \right)^{\delta + 1},
\end{equation*}
    where $p_\varepsilon = 1 + \sqrt{1 - (a/(|\alpha^{-1} - c|  - \varepsilon))^2}$.
    \end{enumerate}
\end{corollary}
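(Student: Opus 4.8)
The plan is to apply Theorem~\ref{thm.decay.tau} with $f = r_\alpha$, mirroring the proof of Corollary~\ref{cor.exp} for $\exp$. The essential new feature is that $r_\alpha(z) = (1-\alpha z)^{-1}$ is not entire: it has a simple pole at $z = \alpha^{-1}\notin\F$. Hence the level parameter $\tau$ of Theorem~\ref{thm.decay.tau} cannot be sent to $+\infty$ as it was for $\exp$, but must be kept strictly below the value at which the level curve $\psi(D_\tau)$ reaches $\alpha^{-1}$; the parameter $\varepsilon$ is introduced precisely to quantify this safety margin. This is also why, unlike in Corollary~\ref{cor.exp}, no lower bound on $\delta$ is needed: there is no optimisation over $\tau$ pushing $\delta$ upward, only the mild constraint $\tau>1$ required by Theorem~\ref{thm.decay.tau}.

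First I would record the geometry of the level curves. Writing \eqref{eqn-psi-ellipse} in the equivalent form $\psi(z) = \tfrac{a+b}{2}z + \tfrac{a-b}{2}z^{-1} + c$, for $z = \tau e^{i\theta}$ with $\tau>1$ the image $E_\tau := \psi(D_\tau)$ is again a horizontal ellipse centred at $c$, with semi-axes
$$
a_\tau \,=\, \tfrac{a+b}{2}\tau + \tfrac{a-b}{2}\tau^{-1} \,\geq\, b_\tau \,=\, \tfrac{a+b}{2}\tau - \tfrac{a-b}{2}\tau^{-1},
$$
and $a_\tau$ is strictly increasing in $\tau$ with $a_1 = a$, $b_1 = b$. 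Since $\F$ is symmetric with respect to the real axis, $c\in\R$; thus $\alpha^{-1}$ lies on the major axis of every $E_\tau$, so the distance from $\alpha^{-1}$ to $E_\tau$ is realised at the vertex and equals $|\alpha^{-1}-c| - a_\tau$ whenever this is positive, and $G_\tau$ (the filled $E_\tau$) contains $\F\supseteq\mathcal F(A)\cup\mathcal F(\tilde A)$.

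Next I would \emph{choose} $\tau=\tau(\varepsilon)$ by imposing $a_\tau = |\alpha^{-1}-c| - \varepsilon$, i.e.\ letting $\tau$ be the larger root of $(a+b)\tau^2 - 2(|\alpha^{-1}-c|-\varepsilon)\tau + (a-b) = 0$. That root exceeds $1$ exactly under the hypothesis $\varepsilon < |\alpha^{-1}-c| - a$ (the same condition also guarantees $|\alpha^{-1}-c|-\varepsilon > \sqrt{a^2-b^2}$, so that $p_\varepsilon$ is well-defined). A short computation turns this into the closed forms
$$
\frac1\tau = \frac{a+b}{\bigl(|\alpha^{-1}-c|-\varepsilon\bigr)p_\varepsilon}, \qquad \frac{\tau}{\tau-1} = \left(1 - \frac{a+b}{\bigl(|\alpha^{-1}-c|-\varepsilon\bigr)p_\varepsilon}\right)^{-1}, \qquad p_\varepsilon = 1 + \sqrt{1 - \frac{a^2-b^2}{(|\alpha^{-1}-c|-\varepsilon)^2}}.
$$
With this $\tau$, $r_\alpha$ is analytic on a neighbourhood of $G_\tau$ (its only singularity $\alpha^{-1}$ sitting at distance $\varepsilon>0$ outside), so on $D_\tau$ one has $|r_\alpha(\psi(z))| = |1-\alpha\psi(z)|^{-1}\le\varepsilon^{-1}$ and hence $\mu_\tau(r_\alpha) = \int_{D_\tau}|r_\alpha(\psi(z))|\,\mathrm{d}z \le 2\pi\tau/\varepsilon$. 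Feeding this and $\tau(\varepsilon)$ into Theorem~\ref{thm.decay.tau}, the constant collapses via $\tfrac2\pi\cdot\tfrac{2\pi\tau}{\varepsilon}\cdot\tfrac{\tau}{\tau-1}\cdot\tau^{-\delta-2} = \tfrac4\varepsilon\bigl(1-\tfrac1\tau\bigr)^{-1}\tau^{-\delta-1}$, which is exactly the claimed ellipse bound once $1/\tau$ and $\tau/(\tau-1)$ are replaced by the closed forms above. The disk bound is then the case $b=a$ (so $\rho=0$ and $p_\varepsilon=2$; equivalently use $\psi(z)=az+c$ directly), and the segment bound is the limit $b\to0^+$, in which $\psi(z)\to\tfrac a2(z+z^{-1})+c$.

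The one genuinely delicate point is the bookkeeping linking the admissible range of $\varepsilon$ to the requirement $\tau>1$: one must verify that $a_1=a$, that $a_\tau$ is increasing, and hence that $a_\tau=|\alpha^{-1}-c|-\varepsilon>a$ is equivalent to $\tau>1$, which is what pins down the condition $0<\varepsilon<|\alpha^{-1}-c|-a$ appearing in the statement. Everything else is the same Faber-coefficient contour estimate used in Corollary~\ref{cor.exp}, with the quickly growing factor $e^{\Re(\psi(z))}$ there replaced by the much milder $|1-\alpha\psi(z)|^{-1}$ — which is exactly what yields the cleaner, $\varepsilon$-indexed family of bounds here.
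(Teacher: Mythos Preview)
Your proposal is correct and follows essentially the same route as the paper: choose $\tau$ so that the level ellipse $\psi(D_\tau)$ sits at distance $\varepsilon$ from the pole $\alpha^{-1}$, bound $\mu_\tau(r_\alpha)$ by $2\pi\tau/\varepsilon$, apply Theorem~\ref{thm.decay.tau}, and then specialise to the disk ($b=a$) and segment ($b\to 0$) cases. The only cosmetic difference is that you parametrise $\psi$ directly via $(a\pm b)/2$ rather than through $\rho=\sqrt{a^2-b^2}$ and $R=(a+b)/\rho$ as the paper does; the resulting $\tau_\varepsilon$ and the final closed forms coincide.
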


 Notice that since adjacency matrices are real their field of values are symmetric with respect to the real axis and
hence the assumption on $\F$ is natural.
We also remark that $p_\varepsilon \approx 2$ when $\delta$ is large.
\begin{proof}
 Here we prove the case of $\F$ with ellipse shape
 since the other two cases can then be derived
 as done in the proof of Corollary \ref{cor.exp}.
 
Let $\phi$ as in \eqref{eq:phi:ellipse} and $\psi$ as in \eqref{eqn-psi-ellipse}.
 Since the function $(1 - \alpha z)^{-1}$ is not analytic in $\alpha^{-1}$ 
 in order to fulfill the assumptions of Theorem \ref{thm.decay.tau} we assume
 $|\psi(z)| < \alpha^{-1}$ for every 
$|z| > 1$. 
 
 Notice that  $\frac{\rho}{2}\left ( R\tau + \frac{1}{R\tau} \right) $ is the major semi-axis of 
 the ellipse $\Gamma_\tau = \{\psi(z) \, : \, |z|= \tau \}$.
 Since the center of the ellipse is on the real axis, 
 for $\varepsilon > 0$ small enough we get
 $$ 
{\varepsilon} =
		    \min_{|z| = \tau_\varepsilon} \left|\, \psi(z) - \alpha^{-1} \, \right|
		  =  |\alpha^{-1} - c| - \frac{\rho}{2}\left ( R\tau_\varepsilon + \frac{1}{R\tau_\varepsilon} \right) ,
$$
with 
$$ \tau_\varepsilon = \frac{|\alpha^{-1} - c|  - \varepsilon + \sqrt{(|\alpha^{-1} - c| - \varepsilon)^2 - \rho^2}}{\rho R}. $$
Notice that the condition $\tau_\varepsilon >1$ is satisfied if and only if $\varepsilon < |\alpha^{-1} - c| - a$, which holds by assumption.
 Hence by Theorem \ref{thm.decay.tau} we get
   $$ 
 \left| \left( r_\alpha(A) - r_\alpha(\tilde A) \right)_{k \ell} \right |     \leq
      4 \frac{\tau_\varepsilon}{\tau_\varepsilon - 1} \frac{1}{\varepsilon} \left ( \frac{1}{\tau_\varepsilon} \right)^{\delta + 1},
   $$
which gives the bound.
\end{proof}

\subsection{Normalized adjacency matrices: Random walks on $G$}\label{sec:stochastic-bounds}
In many cases the adjacency matrix of a graph $G=(V,E,\omega)$ is ``normalized'' into a transition matrix, so to model a random walk process on the edges. 
Transition matrices (or random walks matrices) arise in many network applications, including centrality, quasi-randomness and clustering problems (e.g. \ \cite{brandes2005network, chung2008quasi,rohe2011spectral, von2007tutorial}). 

Assume for simplicity that the graph $G$ is unweighted, loop-free and with no dangling nodes. That is, if $A$ is the adjacency matrix of $G$, then $a_{ij}\in \{0,1\}$, $a_{ii}=0$ $\forall i, j$ and, for each $i$, there exists at least one $j$ such that $a_{ij}=1$.
A popular transition matrix $\mathcal A_{out}$ on $G$ describes the stationary random walk on the graph where   a walker standing on a vertex $i$ 
chooses to walk along one of the outgoing edges of node $i$, with no preference among such edges. The entries of $\mathcal A_{out}$ are the probabilities of going from node $i$ to node $j$ in one step, which are then given by $(\mathcal A_{out})_{ij} = (D_{out}^{-1}A)_{ij}$, where $A$ is the adjacency matrix of $G$,  $D_{out}=\diag(d_1^{out} , \dots, d_n^{out})$ and $d_i^{out}$ is the number of outgoing edges from node $i$. 
Note that when the graph is undirected  the adjacency matrix $A$ is symmetric, however the transition matrix is not. This is one of the reasons why a symmetrized version of $\mathcal A_{out}$ is typically preferred  in this case. %\cite{chung, von luxburg}. 
Such matrix, defined by $\mathcal A = D_{out}^{1/2}\mathcal A_{out}D_{out}^{-1/2}=D_{out}^{-1/2}AD_{out}^{-1/2}$, is also known as normalized adjacency matrix of $G$. 

In this section we discuss how the bounds of Section \ref{sec:main-bounds} transfer to $\mathcal A_{out}$ and $\mathcal A$. For the sake of simplicity, let us first address the undirected  case. 

For a set of nodes $S\subseteq V$ let $\partial_1(S) =\{i\in V\setminus S : \dist_G(S,i)=1\}$ denote the neighborhood of $S$. Let $\mathcal A$ and $\tilde {\mathcal A}$ be the normalized adjacency matrices of $G$ and  $\tilde G$, respectively. Unlike the conventional adjacency matrix, the set of entries that are affected by the changes in $E$ are not only related to $\delta E$, but to a larger set. Precisely,  if the edges in $\delta E$ connect the nodes within $S \subset V$, then changes in $\mathcal A$ occur on the entries corresponding to the nodes in $\bar S = S \cup \partial_1(S)$. Note that, given $k\notin S$, we have  
\begin{equation}\label{eq:dist:scaled}
   \dist_G(k,S) =  \dist_G(k, \bar S) + 1. 
\end{equation}
Therefore, an easy consequence of Lemma \ref{lemma:Apow} applied to $\bar S$ implies that Theorem \ref{thm.decay.tau} holds for $|f(\mathcal A)_{k\ell}-f(\tilde {\mathcal A})_{k\ell}|$ when $\delta$ is replaced by  $\dist_G(k,S) + \dist_G(S, \ell)-2$. However a more careful analysis of the structure reveals that the following lemma holds: 
\begin{lemma}\label{lemma:Apow:normalized}
   Let $G=(V,E,\omega)$ be an undirected graph and $\mathcal A \in \mathbb{R}_{+}^{N\times N}$ be its normalized adjacency matrix. 
   Consider the graph $\tilde G$, 
   obtained by adding, erasing or modifying the weight of the edges between the nodes in a subset $S$ and let $\tilde {\mathcal A}$ be the corresponding normalized adjacency matrix. 
   Then, for any  $k, \ell \notin S$ we have
   $$ ( p_n(\tilde{\mathcal  A}) )_{k \ell}  =  ( p_n(\mathcal A) )_{k \ell}\, , $$ 
   for every polynomial $p_n$ of degree $n \leq \dist_G(k,S) + \dist_G(S,\ell) - 1$.   
\end{lemma}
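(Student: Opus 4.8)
The plan is to argue directly with weighted walks rather than reducing to Lemma~\ref{lemma:Apow} through the enlarged set $\bar S = S\cup\partial_1(S)$, which would only give degree $\dist_G(k,S)+\dist_G(S,\ell)-2$. The gain of one unit comes from the observation that, although the entries of $\mathcal A$ that are altered are exactly those touching $\bar S$, the \emph{value} of a weighted walk from $k$ to $\ell$ changes only if the walk actually reaches $S$ itself, not merely its neighborhood.

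First I would reduce to monomials: by linearity it suffices to prove $(\mathcal A^n)_{k\ell}=(\tilde{\mathcal A}^n)_{k\ell}$ for every $n\le\dist_G(k,S)+\dist_G(S,\ell)-1$. Since $\mathcal A$ and $\tilde{\mathcal A}$ are nonnegative they are the adjacency matrices of weighted graphs having, respectively, the same vertices and edges as $G$ and $\tilde G$, so $(\mathcal A^n)_{k\ell}=\sum_{\mathcal W}\prod_{t=1}^n\mathcal A_{v_{t-1}v_t}$, the sum being over walks $\mathcal W=(k=v_0,\dots,v_n=\ell)$ of length $n$, and likewise for $\tilde{\mathcal A}$. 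The two facts I would record are: (i) if $i,j\notin S$ then the edge $ij$ belongs to $E$ iff it belongs to $\tilde E$ (only edges inside $S$ are modified, added or erased), and in that case $\mathcal A_{ij}=\tilde{\mathcal A}_{ij}$, because $a_{ij}=\tilde a_{ij}$ and the degrees $d_i,d_j$ are unchanged (a degree changes only for nodes of $S$); consequently (ii) any walk from $k$ to $\ell$ that never meets $S$ is a walk of both $G$ and $\tilde G$ and carries the same weighted contribution in both.

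The key step is to show that every length-$n$ walk from $k$ to $\ell$ meeting $S$ satisfies $n\ge\dist_G(k,S)+\dist_G(S,\ell)$. For a walk of $G$ this is immediate: if $v_t\in S$ then $t\ge\dist_G(k,S)$ and $n-t\ge\dist_G(S,\ell)$. For a walk $\mathcal W'$ of $\tilde G$ the argument is more delicate, because $\tilde G$ may contain new shortcuts through $S$: here I would let $t_1$ and $t_2$ be the first and last indices with $v_{t_1},v_{t_2}\in S$, so that $1\le t_1\le t_2\le n-1$ since $k,\ell\notin S$, and note that every edge $(v_{j-1},v_j)$ of the prefix $(v_0,\dots,v_{t_1})$ has the endpoint $v_{j-1}$ outside $S$, hence is an edge of $G$ (new edges lie in $S\times S$); thus the prefix is a walk of $G$ and $t_1\ge\dist_G(k,v_{t_1})\ge\dist_G(k,S)$, and symmetrically the suffix $(v_{t_2},\dots,v_n)$ is a walk of $G$ giving $n-t_2\ge\dist_G(S,\ell)$, so that $n\ge t_1+(n-t_2)\ge\dist_G(k,S)+\dist_G(S,\ell)$. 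With this in hand the conclusion follows: for $n\le\dist_G(k,S)+\dist_G(S,\ell)-1$ no length-$n$ walk from $k$ to $\ell$ in either graph can meet $S$, so by (ii) the walk sets and their weighted contributions coincide and $(\mathcal A^n)_{k\ell}=(\tilde{\mathcal A}^n)_{k\ell}$; linearity then closes the argument.

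The main obstacle is exactly the $\tilde G$ case of the key step: one cannot bound the length of a walk of $\tilde G$ meeting $S$ in terms of distances in $\tilde G$, since $\tilde G$ may be far better connected through $S$. The remedy — peeling off the portion of the walk before its first contact with $S$ and after its last contact with $S$ and recognizing those portions as walks of the \emph{original} graph $G$ — is also precisely where the extra $+1$ over the naive $\bar S$ bound is earned, since those peeled portions must reach $S$ proper rather than merely $\partial_1(S)$.
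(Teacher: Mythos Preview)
Your proof is correct and follows essentially the same approach as the paper's: both arguments show that any walk from $k$ to $\ell$ whose weighted contribution differs between $\mathcal A$ and $\tilde{\mathcal A}$ must actually enter $S$ (not merely $\bar S$), and that such a walk has length at least $\dist_G(k,S)+\dist_G(S,\ell)$. The paper phrases this via the decomposition $k\to\partial_1(S)\to S\to\partial_1(S)\to\ell$ together with the identity $\dist_G(k,S)=\dist_G(k,\bar S)+1$, whereas you peel off the prefix before the first visit to $S$ and the suffix after the last visit and observe that these are walks of $G$; your treatment of the $\tilde G$ case is in fact more explicit than the paper's, but the underlying idea is the same.
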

\begin{proof}
  A walk from $k$ to $\ell$ in $\tilde G$ contains a modified edge
  only if it passes through at least one modified edge in $S$ or 
  through at least one re-weighted edge connecting  $S$ and $\partial_1(S)$.
  Therefore, any modified walk must go from $k$ to $\partial_1(S)$, then from $\partial_1(S)$ to $S$,
  then from $S$ to $\partial_1(S)$, and finally from $\partial_1(S)$ to $\ell$.
  Therefore, due to the identity \eqref{eq:dist:scaled}, the length of any modified walk must be equal or longer than 
  $$\dist_G(k, \bar S) + \dist_G(\bar S, \ell) + 2 = \dist_G(k, S) + \dist_G(S, \ell). $$
  The proof thus follows as the one of Lemma \ref{lemma:Apow}.
\end{proof}

Hence, following the same arguments as the one in Section \ref{sec:main:results}, we can extend to $\mathcal A$ the bounds of Theorem \ref{thm.decay.tau}, Corollary \ref{cor.exp} and Corollary \ref{cor.res} by replacing $\delta$ with $\delta - 1$. 

Let us now consider the case of a directed network and let us thus transfer Lemma \ref{lemma:Apow} to the transition matrix $\mathcal A_{out}$. Arguing as above we obtain
\begin{lemma}\label{lemma:Apow:transition}
  Let $G=(V,E,\omega)$ be a directed graph and $\mathcal A_{out} \in \mathbb{R}_{+}^{N\times N}$ be the its transition matrix. 
   Consider the graph $\tilde G$, 
   obtained by adding or erasing the edges in $\delta E \subset V \times V$, and let $\tilde {\mathcal A}_{out}$ be the corresponding transition matrix.
   If $S=\{s | (s,t) \in \delta E\}$ and $T=\{t | (s,t) \in \delta E\}$ are
   respectively the sets of sources and tips of $\delta E$, then 
   $$ ( p_n(\tilde {\mathcal A}_{out}) )_{k \ell}  =  ( p_n(\mathcal A_{out}) )_{k \ell}, $$ 
   for every polynomial $p_n$ of degree $n \leq \dist_G(k,S) + \min\{\dist_G(T,\ell), \dist_G(S,\ell) -1 \}$. 
\end{lemma}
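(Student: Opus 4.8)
The plan is to reproduce the combinatorial argument of Lemma~\ref{lemma:Apow}, taking into account that normalizing the adjacency matrix by the out-degrees reweights an \emph{entire} row whenever an out-edge of the corresponding node is modified. The starting observation is that, since every edge of $\delta E$ has its source in $S$, for $u\notin S$ neither the edges leaving $u$ nor the out-degree $d_u^{out}$ is affected by the perturbation, so $(\mathcal A_{out})_{uv}=(\tilde{\mathcal A}_{out})_{uv}$ for every $v$. Writing $(\mathcal A_{out}^n)_{k\ell}=\sum_{\sigma}\prod_{i=0}^{n-1}(\mathcal A_{out})_{u_iu_{i+1}}$, the sum running over all sequences $\sigma=(k=u_0,u_1,\dots,u_n=\ell)$ and its nonzero terms corresponding exactly to the walks of length $n$ from $k$ to $\ell$ in $G$ (and likewise for $\tilde{\mathcal A}_{out}$ and $\tilde G$), the observation above shows that a sequence with $u_0,\dots,u_{n-1}\notin S$ contributes identical terms to the two expansions. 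Hence $(\mathcal A_{out}^n-\tilde{\mathcal A}_{out}^n)_{k\ell}$ receives contributions only from sequences $\sigma$ that are walks in $G$ or in $\tilde G$ and that visit $S$ at some position $u_j$ with $j\le n-1$.

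It then remains to bound the length $n$ of such a contributing sequence from below. Let $j$ be the first index with $u_j\in S$; since $u_0,\dots,u_{j-1}\notin S$ and every edge of $\delta E$ leaves $S$, the prefix $u_0\cdots u_j$ uses no edge of $\delta E$ and hence is a walk in $G$ from $k$ to $S$, so $j\ge\dist_G(k,S)$. For the tail $u_j\cdots u_n$ there are two cases. If it lies entirely in $G$ --- in particular whenever $\sigma$ itself is a walk in $G$ --- then it is a walk in $G$ from $S$ to $\ell$, so $n-j\ge\dist_G(S,\ell)$. Otherwise $\sigma$ must be a walk in $\tilde G$ and its tail contains a newly added edge; letting $j'$ be the position of the \emph{last} such edge, its endpoint $u_{j'+1}$ lies in $T$, and the portion $u_{j'+1}\cdots u_n$ lies entirely in $G$, hence is a walk in $G$ from $T$ to $\ell$, so that $n-j'-1\ge\dist_G(T,\ell)$ and thus $n-j\ge\dist_G(T,\ell)+1$. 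In either case $n\ge\dist_G(k,S)+\min\{\dist_G(S,\ell),\,\dist_G(T,\ell)+1\}$. Consequently, if $n\le\dist_G(k,S)+\min\{\dist_G(T,\ell),\,\dist_G(S,\ell)-1\}$ then no sequence contributes, whence $(\mathcal A_{out}^m)_{k\ell}=(\tilde{\mathcal A}_{out}^m)_{k\ell}$ for every $0\le m\le n$, and the assertion for a polynomial $p_n$ of degree at most this bound follows by linearity.

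The crux of the argument --- and the source of the asymmetric term $\min\{\dist_G(T,\ell),\dist_G(S,\ell)-1\}$, which has no analogue in Lemma~\ref{lemma:Apow} --- is that a contributing walk lives a priori only in $\tilde G$, so one cannot simply split it at an edge of $\delta E$ and read off $G$-distances on both sides. The resolution is to split the tail at its \emph{last} added edge: everything after that point is forced to lie in $G$, which recovers the term $\dist_G(T,\ell)+1$ (the $+1$ being the added edge itself); while if the tail contains no added edge at all it already lies in $G$, which recovers $\dist_G(S,\ell)$ instead --- hence the minimum of the two. A minor bookkeeping point to be checked separately is the degenerate situation in which $k$ or $\ell$ itself belongs to $S$, so that $\dist_G(k,S)$ or $\dist_G(S,\ell)-1$ vanishes or becomes negative; restricting the visited position to $j\le n-1$ makes the argument go through unchanged in those cases as well.
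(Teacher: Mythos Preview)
Your proof is correct and follows the same walk-counting approach as the paper, splitting contributing walks according to how they interact with $S$ and $\delta E$. Your case split --- tail in $G$ versus tail containing an added edge, resolved at the \emph{last} such edge --- is somewhat more carefully argued than the paper's informal split (first out-edge from $S$ lying in $\delta E$ versus merely re-weighted toward $\partial_1^{out}(S)$), but the underlying combinatorial argument is the same.
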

\begin{proof}
  Consider $\partial_1^{out}(S) =\{i\in V\setminus S : \dist_G(S,i)=1\}$,
  the out-neighborhood of $S$.
  A walk from $k$ to $\ell$ in $\tilde G$ contains a modified edge
  only if it passes through at least one modified edge in $S$ or 
  through at least one re-weighted edge connecting  $S$ and $\partial_1^{out}(S)$.
  Therefore, any modified walk must go from $k$ to $S$, then it may go from $S$ to $T$ through a modified edge,
  or it can go to any node in $\partial_1^{out}(S)$.
  In the first case, the length from $k$ to $\ell$ of the walk must be greater or equal than
  $$ \dist_G(k,S) + \dist_G(T,\ell) + 1. $$
  In the second case, the length of the walk must be greater or equal than
  $$ \dist_G(k,S) + \dist_G(S,\ell). $$
  The proof thus follows as the one of Lemma \ref{lemma:Apow}.
\end{proof}
Hence, 
we can extend to $\mathcal A_{out}$ the bounds of Theorem \ref{thm.decay.tau}, Corollary \ref{cor.exp} and Corollary \ref{cor.res} 
by replacing $\delta$ with $\min\{\delta, \dist_G(k,S) + \dist_G(S,\ell) -1\}$.

\subsection{On the field of values of adjacency matrices} 
Two quantities play a key role in the computation of the bounds we proposed: the shortest-path distances between pairs of nodes and the shape of the field of values of $A$ and $\tilde A$. The next subsection deals with the former whereas we  devote the present subsection to the latter.

Let $A$ be an $N\times N$ matrix.  The spectral and numerical radius of $A$ are the quantities 
$$\rho(A)=\max\{|\lambda| : \lambda \text{ is eigenvalue of }A\}, \qquad \nu(A) = \max\{|w| : w \in \mathcal F(A)\}\, , $$ 
respectively. 
When $A$ has nonnegative entries ($A\geq 0$), the Perron-Frobenius theorem ensures that $\rho(A)$ belongs to the spectrum of $A$ and addresses various properties of such eigenvalue and the associated eigenvectors.  For completeness, we recall part of that theorem here below (see for instance \cite{berman1994nonnegative})
\begin{theorem}[Perron-Frobenius theorem]
 Let $A\geq 0$. Then 
%\begin{enumerate}
%\item[1.]
$\rho(A)$ is an eigenvalue of $A$ and there exist  $x,y \geq 0$ such that $Ax=\rho(A)x$, $y^T A = \rho(A)y^T$. Moreover, $\rho(A)\geq \rho(B)$ for any $B\geq 0$ such that $A\geq B$ entry-wise. 
%\end{enumerate}
If $A\geq 0$ is irreducible, then 
%\begin{enumerate}
%\item[2.] 
$\rho(A)$ is simple and nonzero and the corresponding left and right eigenvectors $x,y$ are positive and unique (up to a scalar multiple). Moreover, $\rho(A)>\rho(B)$ for any $0\leq B\leq A$ such that $B\neq A$.%there exists $k \geq 1$ such that $\rho(A)\exp(2\pi i p/k)$, $p=0,\dots,k-1$, are simple eigenvalues of $A$ and there is no other eigenvalue of modulus $\rho(A)$. 
%\end{enumerate}
\end{theorem}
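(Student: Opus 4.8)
The plan is to reduce the whole statement to the case of a strictly positive matrix, where a fixed-point argument applies, and then to recover the general nonnegative case and the irreducible refinements by soft arguments: a perturbation-and-limit passage, the elementary equivalence between irreducibility of $A\ge 0$ and $(I+A)^{N-1}>0$, and a derivative-of-the-characteristic-polynomial computation. First I would treat $P>0$: the continuous map $x\mapsto Px/(\mathbbm 1^{T}Px)$ sends the standard simplex $\Delta=\{x\ge 0:\mathbbm 1^{T}x=1\}$ into itself, so Brouwer's fixed point theorem gives $x\in\Delta$ with $Px=\lambda x$, $\lambda=\mathbbm 1^{T}Px>0$, and hence $x=\lambda^{-1}Px>0$; the same argument applied to $P^{T}$ yields a strictly positive left eigenvector $y$. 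To identify $\lambda$ with $\rho(P)$ I would take an arbitrary eigenpair $(\mu,v)$, use the entrywise inequality $P|v|\ge|\mu|\,|v|$, pair it with $y$, and exploit $y^{T}|v|>0$ to conclude $|\mu|\le\lambda$.

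For a general $A\ge 0$ I would apply the previous step to $A_{\varepsilon}=A+\varepsilon\,\mathbbm 1\mathbbm 1^{T}>0$, obtaining normalized Perron vectors $x_{\varepsilon}\in\Delta$ with $A_{\varepsilon}x_{\varepsilon}=\rho(A_{\varepsilon})x_{\varepsilon}$; by compactness of $\Delta$ I would extract a subsequence $x_{\varepsilon_k}\to x\in\Delta$, and, since the eigenvalues depend continuously on the entries, pass to the limit to obtain $Ax=\rho(A)x$ with $x\ge 0$, $x\ne 0$. The left eigenvector $y\ge 0$ follows by the same argument applied to $A^{T}$. The monotonicity $\rho(A)\ge\rho(B)$ for $0\le B\le A$ then comes for free from the entrywise inequality $0\le B^{n}\le A^{n}$, a matrix norm monotone on nonnegative matrices, and Gelfand's formula $\rho(B)=\lim_{n}\|B^{n}\|^{1/n}$. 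This settles the first part.

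For the irreducible part I would use that $A\ge 0$ irreducible is equivalent to $(I+A)^{N-1}>0$. Applying that matrix to the nonnegative Perron vectors above gives $(1+\rho(A))^{N-1}x=(I+A)^{N-1}x>0$, so $x>0$, and likewise $y>0$, whence $\rho(A)=y^{T}Ax/(y^{T}x)>0$. For uniqueness up to scaling I would argue that a real eigenvector $z$ for $\rho(A)$ not proportional to $x$ can be combined into $w=x-tz\ge 0$ with a vanishing coordinate, so that $Aw=\rho(A)w$ and $(I+A)^{N-1}w>0$ unless $w=0$ — a contradiction, hence $w=0$ and $z$ is a scalar multiple of $x$ (a complex eigenvector reduces to this by taking real and imaginary parts). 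For the strict monotonicity, assuming $\rho(B)=\rho(A)=:\rho$ with $0\le B\le A$, I would pick $u\ge 0$, $u\ne 0$ with $Bu=\rho u$, observe $Au-\rho u=Au-Bu\ge 0$, pair with the strictly positive left Perron vector $y$ of $A$ to force $Au=\rho u$, deduce $u>0$ from uniqueness, and conclude that $(A-B)u=0$ with $A-B\ge 0$ forces $A=B$, contradicting $B\ne A$; therefore $\rho(B)<\rho(A)$.

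The hard part is the algebraic simplicity of $\rho:=\rho(A)$. I would obtain it from the identity $\chi_{A}'(\lambda)=\sum_{i=1}^{N}\chi_{A(i)}(\lambda)$, where $\chi_{A}(\lambda)=\det(\lambda I-A)$ and $A(i)$ is the principal submatrix of $A$ obtained by deleting row and column $i$: embedding $A(i)$ back into an $N\times N$ matrix $B$ with the $i$-th row and column zeroed gives $0\le B\le A$ with $B\ne A$ (irreducibility rules out a zero row-and-column), so the strict monotonicity just proved yields $\rho(A(i))=\rho(B)<\rho$; since $\chi_{A(i)}$ has all its zeros in $\{|\lambda|\le\rho(A(i))\}$ and is positive for large real $\lambda$, it stays positive on $(\rho(A(i)),\infty)$, in particular $\chi_{A(i)}(\rho)>0$. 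Summing over $i$ gives $\chi_{A}'(\rho)>0$, so $\rho$ is a simple root of $\chi_A$; combined with the one-dimensionality of its eigenspace this shows that geometric and algebraic multiplicity both equal one, completing the argument. A fully self-contained write-up of this last step is somewhat delicate, which is why the statement above is quoted from \cite{berman1994nonnegative} rather than reproved here.
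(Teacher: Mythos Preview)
The paper does not prove this theorem at all: it is recalled verbatim ``for completeness'' with a reference to \cite{berman1994nonnegative} and no argument is given. So there is no paper proof to compare against; your proposal supplies something the paper deliberately omits.

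That said, your sketch is a correct and standard route to Perron--Frobenius: Brouwer on the simplex for $P>0$, a perturbation-and-compactness limit for general $A\ge 0$, Gelfand for the monotonicity $\rho(B)\le\rho(A)$, the characterisation $(I+A)^{N-1}>0$ of irreducibility to force $x,y>0$, the usual ``push to the boundary of the positive orthant'' trick for geometric simplicity, the pairing with $y>0$ for strict monotonicity, and finally $\chi_A'(\lambda)=\sum_i\chi_{A(i)}(\lambda)$ together with $\rho(A(i))<\rho(A)$ for algebraic simplicity. Each step is sound; the only places where a careful write-up needs a sentence more are (i) in the uniqueness argument, making explicit that since $x>0$ one can choose $t$ (of either sign) so that $w=x-tz\ge 0$ hits the boundary while $w\ne 0$ because $z$ is not a scalar multiple of $x$, and (ii) in the simplicity step, the observation that complex-conjugate root pairs of $\chi_{A(i)}$ contribute strictly positive quadratic factors on the real axis, so positivity of $\chi_{A(i)}(\rho)$ indeed follows from $\rho>\rho(A(i))$. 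Your closing remark already acknowledges that the paper chose to cite rather than reprove, which is exactly the situation here.
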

As for the spectrum of $A$, a Perron-Frobenius theory for the field of values $\mathcal F(A)$ has been developed in relatively recent years. We collect in the theorem below a number of results, borrowed from \cite{li2002numerical,maroulas2002perron},  which are  useful for our scopes. To this end, let us further define  the Hermitian part of $A$ as the Hermitian matrix $H_A=(A+A^*)/2$. 
\begin{theorem}[\cite{li2002numerical,maroulas2002perron}]\label{thm:PF_for_F(A)}
Let $A\geq 0$. Then
\begin{enumerate}
 \item $\nu(A)$ is the element of maximal modulus in $\mathcal F(A)$, attained by the Perron eigenvector of $H_A$. Precisely, we have 
\begin{equation}\label{eq:numerical_radius}
 \nu(A)=\rho(H_A) = x^* A x\, 
\end{equation}
where $x$ is such that $x\geq 0$, $\|x\|_2=1$ and $H_Ax=\rho(H_A)x$. 
\item  If $A$ is irreducible, then the maximal elements of $\mathcal F(A)$ are 
$$\nu(A)\exp(2i\pi p/k), \qquad p=1,2,\dots, k-1$$
being $k$ the index of imprimitivity of $A$, i.e.\ the number of eigenvalues of $A$ with maximal modulus.
\end{enumerate}
\end{theorem}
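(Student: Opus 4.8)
\ Since $A\ge 0$ entrywise, the guiding idea is that $\mathcal F(A)$ is governed by classical Perron--Frobenius theory applied to the nonnegative Hermitian matrix $H_A=(A+A^*)/2$. For the first assertion I would start from the pointwise estimate $|x^*Ax|=|\sum_{i,j}\bar x_i a_{ij} x_j|\le\sum_{i,j}|x_i| a_{ij}|x_j|=|x|^*A\,|x|$, valid for every $x\in\mathbb C^N$ precisely because $a_{ij}\ge 0$ (here $|x|$ is the entrywise modulus); since $|x|$ is real and nonnegative, the right-hand side equals $|x|^*H_A\,|x|$. Maximizing over unit vectors and using $\rho(H_A)=\lambda_{\max}(H_A)=\max_{\|u\|_2=1}u^*H_A u$ (which holds because $H_A\ge 0$ is Hermitian) one gets $\nu(A)\le\rho(H_A)$; conversely $x^*H_A x=\re(x^*Ax)\le|x^*Ax|$ yields $\rho(H_A)=\max_{\|x\|_2=1}\re(x^*Ax)\le\nu(A)$, so $\nu(A)=\rho(H_A)$. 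Evaluating at the nonnegative unit Perron eigenvector $x$ of $H_A$ — so that $x^*Ax=x^\top Ax=x^\top H_A x=\rho(H_A)$ — shows the maximal modulus is attained at $x$, which is \eqref{eq:numerical_radius}.

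For the second assertion I would first record that irreducibility passes from $A$ to $H_A$: the graph of $H_A$ is the undirected graph underlying the strongly connected digraph of $A$, hence connected, so by Perron--Frobenius the eigenvector $x$ above is strictly positive and $\rho(H_A)$ is a simple eigenvalue. Now take any $w=y^*Ay\in\mathcal F(A)$ with $\|y\|_2=1$ and $|w|=\nu(A)$ and run the chain $\nu(A)=|y^*Ay|\le|y|^*H_A\,|y|\le\rho(H_A)=\nu(A)$: both inequalities are equalities. Equality in the second one forces $|y|$ to maximize the Rayleigh quotient of $H_A$, and simplicity of $\rho(H_A)$ gives $|y|=x>0$, so $y_j=x_j e^{\mathrm i\phi_j}$. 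Equality in the first (triangle) inequality then forces all nonzero summands $\bar y_i a_{ij} y_j=x_i x_j a_{ij} e^{\mathrm i(\phi_j-\phi_i)}$ to share a common argument $\theta=\arg w$, i.e.\ $\phi_j-\phi_i\equiv\theta\pmod{2\pi}$ along every edge of the graph of $A$. Summing this congruence around a directed cycle of length $r$ gives $r\theta\equiv 0\pmod{2\pi}$; writing $\theta/(2\pi)=p/q$ in lowest terms, this forces $q\mid r$ for every cycle length $r$, hence $q$ divides the index of imprimitivity $k$ (the greatest common divisor of the cycle lengths of the strongly connected graph of $A$), so $k\theta\equiv 0\pmod{2\pi}$ and $w=\nu(A)e^{2\pi\mathrm i p/k}$ for an integer $p$ (the case $p\equiv 0$ recovering $\nu(A)$ itself).

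It remains to verify the converse inclusion, namely that each $\nu(A)e^{2\pi\mathrm i p/k}$ actually belongs to $\mathcal F(A)$. Here I would invoke the cyclic (Frobenius) normal form of an imprimitive irreducible nonnegative matrix, which provides a diagonal unitary $D$ with $D^*AD=e^{2\pi\mathrm i/k}A$; then $y=D^p x$ has $\|y\|_2=1$ and $y^*Ay=e^{2\pi\mathrm i p/k}x^*Ax=e^{2\pi\mathrm i p/k}\nu(A)$. The inequality bookkeeping is routine; the two steps requiring care are the equality analysis of the triangle inequality combined with the connectivity/gcd argument that pins down the admissible phases, and the appeal to the cyclic normal form, which I expect to be the main obstacle if one wants a self-contained proof rather than quoting \cite{li2002numerical,maroulas2002perron} directly.
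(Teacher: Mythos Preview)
The paper does not prove this theorem; it merely quotes it from \cite{li2002numerical,maroulas2002perron}. Your argument is correct and essentially reconstructs the proofs found in those references.

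Part~1 is clean: the pointwise triangle inequality $|x^*Ax|\le |x|^\top A\,|x|=|x|^\top H_A\,|x|$ combined with the reverse bound $\rho(H_A)=\max_{\|x\|_2=1}\Re(x^*Ax)\le\nu(A)$ is exactly the standard route. The only remark is that ``$H_A\ge 0$'' in your parenthetical should be read as entrywise nonnegativity (so that Perron--Frobenius gives $\rho(H_A)=\lambda_{\max}(H_A)$), not positive semidefiniteness; you use it correctly but the phrasing could mislead.

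Part~2 is also correct. The equality analysis forcing $|y|=x$ and then $\phi_j-\phi_i\equiv\theta$ along edges, followed by the cycle/gcd argument, is precisely the mechanism in \cite{maroulas2002perron}. For the converse inclusion you invoke the Frobenius cyclic structure $D^{-1}AD=e^{2\pi\mathrm i/k}A$; this is standard (Wielandt's lemma) and is the same tool used in \cite{li2002numerical}. So your proof is sound and matches the cited literature; there is nothing to compare against in the present paper itself.
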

Point 1 of Theorem \ref{thm:PF_for_F(A)} shows that, for nonnegative matrices $A\geq 0$, it is always possible to compute a set $\F$ containing the field of values of $A$ or of $\tilde A$, by letting $\F = \{\zeta\in \mathbb C : |\zeta|\leq \nu\}$ where $\nu$ is $\nu(A)$ or $\nu(\tilde A)$, respectively. 
Moreover,  for irreducible nonnegative matrices $A$, point  2 in Theorem \ref{thm:PF_for_F(A)} shows how the shape of the field of values $\mathcal F(A)$ is  characterized in terms of the index of imprimitivity of $A$. Thus, if the imprimitivity index of $A$ is large, then  the ball $\{\zeta\in \mathbb C : |\zeta|\leq \nu(A)\}$ is a tight approximation of the field of values $\mathcal F(A)$.

% Let $A$ be a $N\times N$ real matrix with nonnegative entries.  The numerical radius of $A$ is the quantity 
% $$\nu(A) = \max\{|w| : w \in \mathcal F(A)\}\, , $$ 
% whereas, the Hermitian part of $A$ is the Hermitian matrix defined by $H_A=(A+A^*)/2$. As for the spectrum of $A$, a Perron-Frobenius theory for the field of values $\mathcal F(A)$ has been developed in relatively recent years (see e.g. \cite{li2002numerical,maroulas2002perron}). 

% We collect in the theorem below a number of results which are  useful for our scopes: 
% \begin{theorem}[\cite{li2002numerical,maroulas2002perron}]\label{thm:PF_for_F(A)}
% Let $A\geq 0$ and let $\nu(A)=\max\{|w| : w \in \mathcal F(A)\}$ be its numerical radius. Then
% \begin{enumerate}
%  \item $\nu(A)$ is the element of maximal modulus in $\mathcal F(A)$, attained by the Perron eigenvector of $H_A$. Precisely, we have 
% \begin{equation}\label{eq:numerical_radius}
%  \nu(A)=\rho(H_A) = x^* A x\, 
% \end{equation}
% where $x$ is such that $x\geq 0$, $\|x\|_2=1$ and $H_Ax=\rho(H_A)x$. 
% \item  If $A$ is irreducible, then the maximal elements of $\mathcal F(A)$ are 
% $$\nu(A)\exp(2i\pi p/k), \qquad p=1,2,\dots, k-1$$
% being $k$  the number of eigenvalues of $A$ with maximal modulus.
% \end{enumerate}
% \end{theorem}

The normalized adjacency matrix $\mathcal{A}$ has the desirable property of being diagonally similar to a stochastic matrix. This implies that $\nu(\mathcal A)=\nu(\tilde{\mathcal A})=1$. For general nonnegative matrices, instead, the field of values can be large. However, due to \eqref{eq:numerical_radius}, the numerical radii  $\nu(A)$ and $\nu(\tilde A)$ can be well approximated by standard eigenvalues techniques such as the power method or the Lanczos process. The computational cost of this operation is much smaller than the one required to compute the entries of the matrix functions $f(A)$ or $f(\tilde A)$. Moreover, if $\delta A$ is sparse enough, we expect $\nu(A)$ and $\nu(\tilde A)$ to be close. This claim is also supported by the following Theorem \ref{thm:bound_on_r(A)}, where the case of a single-entry perturbation is discussed.   
\color{blue}
\begin{theorem}\label{thm:bound_on_r(A)}
 Let $A\geq 0$ and let $\tilde A = A + \varepsilon \uno_m\uno_n^T$, with $\varepsilon>0$ and $m\neq n$. Then
 \begin{enumerate}
  \item $0\leq \nu(\tilde A)-\nu(A) \leq \varepsilon/2$ and, if $H_{\tilde A}$ is irreducible, then $\nu(\tilde A)-\nu(A)>0$.
  \item Assume $A$ is symmetric  and irreducible. Let  $\lambda_i$ be the eigenvalues of $A$ different from $\rho(A)$ and let $\mathrm{gap} = \min_i|\lambda_i-\rho(A)|$.  If $\varepsilon \leq 2\,\mathrm{gap}$ then for any nonnegative function $f$ %$f:\mathbb R\rightarrow \mathbb R_+$ 
  such that $f(\nu(A))\neq 0$, we have
  $$0<\nu(\tilde A)-\nu(A) \leq \varepsilon\frac{\sqrt{f(A)_{mm}f(A)_{nn}}}{f(\nu(A))} + O(\varepsilon^2)$$
 \end{enumerate}
\end{theorem}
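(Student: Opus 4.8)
The core identity is $\nu(A)=\rho(H_A)$ from \eqref{eq:numerical_radius}, so both parts reduce to estimating how the Perron root of the Hermitian matrix $H_A$ changes under the rank-one symmetric perturbation $H_{\tilde A}=H_A+\tfrac{\varepsilon}{2}(\uno_m\uno_n^T+\uno_n\uno_m^T)$. Note that $H_{\tilde A}-H_A$ is a symmetric matrix with two nonzero off-diagonal entries equal to $\varepsilon/2$; its eigenvalues are $\pm\varepsilon/2$, so $\|H_{\tilde A}-H_A\|_2=\varepsilon/2$. For part 1, the lower bound $\nu(\tilde A)\ge \nu(A)$ is immediate from monotonicity of the Perron root under entry-wise increase (the Perron--Frobenius statement already quoted: $\rho(H_{\tilde A})\ge\rho(H_A)$ since $H_{\tilde A}\ge H_A\ge 0$ entry-wise). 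The upper bound $\nu(\tilde A)-\nu(A)\le\varepsilon/2$ follows from Weyl's inequality for Hermitian matrices: $\rho(H_{\tilde A})\le\rho(H_A)+\|H_{\tilde A}-H_A\|_2=\rho(H_A)+\varepsilon/2$. Strict inequality when $H_{\tilde A}$ is irreducible comes from the second (irreducible) part of Perron--Frobenius: $0\le H_A\le H_{\tilde A}$ with $H_A\ne H_{\tilde A}$ forces $\rho(H_A)<\rho(H_{\tilde A})$.

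For part 2 we want the sharper first-order estimate. Here $A$ is symmetric, so $H_A=A$, $\nu(A)=\rho(A)$, and the perturbation is $\Delta=\tfrac{\varepsilon}{2}(\uno_m\uno_n^T+\uno_n\uno_m^T)$. The plan is to use standard analytic perturbation theory for the simple eigenvalue $\rho(A)$ of the symmetric matrix $A$: writing $v$ for the normalized Perron eigenvector ($Av=\rho(A)v$, $\|v\|_2=1$, $v>0$ by irreducibility), the perturbed eigenvalue satisfies
$$\nu(\tilde A)-\nu(A)=v^T\Delta v+O(\|\Delta\|^2)=\varepsilon\, v_m v_n+O(\varepsilon^2).$$
The $O(\varepsilon^2)$ constant is controlled by the spectral gap $\mathrm{gap}=\min_i|\lambda_i-\rho(A)|$, which is where the hypothesis $\varepsilon\le 2\,\mathrm{gap}$ enters: it guarantees we stay within the radius of convergence of the perturbation series (since $\|\Delta\|_2=\varepsilon/2\le\mathrm{gap}$) so the expansion is valid and the error term is genuinely $O(\varepsilon^2)$ with a clean constant. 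Positivity $\nu(\tilde A)-\nu(A)>0$ is again the irreducible Perron--Frobenius monotonicity (adding a positive entry to an irreducible nonnegative matrix strictly increases the Perron root). The remaining step is to replace $v_m v_n$ by something involving $f(A)$: since $A$ is symmetric with spectral decomposition $A=\sum_j\lambda_j v^{(j)}(v^{(j)})^T$ and $f(A)=\sum_j f(\lambda_j)v^{(j)}(v^{(j)})^T$ with all $f(\lambda_j)\ge 0$, we have $f(A)_{mm}=\sum_j f(\lambda_j)(v^{(j)}_m)^2\ge f(\rho(A))v_m^2$ and likewise $f(A)_{nn}\ge f(\rho(A))v_n^2$; multiplying and taking square roots gives $v_m v_n\le \sqrt{f(A)_{mm}f(A)_{nn}}/f(\nu(A))$, using $f(\nu(A))=f(\rho(A))>0$. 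Substituting into the first-order expansion yields the claimed bound.

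The main obstacle I anticipate is making the $O(\varepsilon^2)$ term in part 2 precise enough to justify the clean statement — i.e.\ verifying that under $\|\Delta\|_2\le\mathrm{gap}$ the perturbed Perron root depends analytically on $\varepsilon$ with the stated first-order term and a remainder bounded uniformly in terms of $\mathrm{gap}$ and $\|\Delta\|$. This is classical (Kato, or the Rayleigh--Schrödinger series), but one must be a little careful that $\rho(\tilde A)$ remains the eigenvalue that continues $\rho(A)$ (no level crossing), which is exactly what the gap condition secures: an interval of half-width $\mathrm{gap}$ around $\rho(A)$ contains only that eigenvalue of $A$, and since all eigenvalues move by at most $\|\Delta\|_2\le\mathrm{gap}$, the perturbed top eigenvalue stays isolated and is identifiable as the Perron root of $\tilde A$. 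Everything else — the Weyl bound, the monotonicity, the $f(A)$ comparison — is routine.
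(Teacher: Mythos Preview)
Your proposal is correct and follows essentially the same approach as the paper's proof: both parts reduce to perturbation of the Perron root of $H_A$, with Part~1 using monotonicity of the Perron root plus a Hermitian eigenvalue perturbation bound (you invoke Weyl, the paper invokes Bauer--Fike, which coincide here), and Part~2 using the first-order expansion $\nu(\tilde A)=\nu(A)+\varepsilon\,v_m v_n+O(\varepsilon^2)$ together with the inequality $f(A)_{ii}\ge f(\nu(A))v_i^2$ and the gap condition to rule out level crossing. The only cosmetic difference is that the paper establishes $\nu(\tilde A)=\lambda(\varepsilon)$ via the Courant--Fischer min--max characterization to show $\mu_{n-1}\le\nu(A)<\nu(\tilde A)$, whereas you phrase it in terms of Weyl's bound keeping the perturbed top eigenvalue isolated; these are equivalent arguments.
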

\begin{proof}
 As $\tilde A \geq A \geq 0$, then $H_{\tilde A} \geq H_A \geq 0$ and, by the Perron-Frobenius theorem and point 1 of Theorem \ref{thm:PF_for_F(A)}, we have $\nu(\tilde A)\geq \nu(A)$. 
Now let $\delta A = \varepsilon\uno_m\uno_n^T$. We have $H_{\delta A}(\uno_m\pm \uno_n)=\pm (\uno_m\pm \uno_n)\varepsilon/2$. Therefore, as the rank of $H_{\delta A}$ is at most 2, we have $\rho(H_{\delta A})=\|H_{\delta A}\|_2=\varepsilon/2$. Moreover,  as $H_A$ is symmetric, there exists an orthogonal basis of eigenvectors $V$ with $\|V\|_2\|V^{-1}\|_2 =1$. Thus, by the Bauer-Fike theorem (see, e.g.,  \cite{golub2012matrix}) applied to $H_{\tilde A} = H_A+H_{\delta A}$ we get 
 $$\nu(\tilde A) - \nu(A)=\rho(H_{\tilde A}) - \rho(H_A)\leq \| H_{\tilde A}-H_{A}\|_2 = \|H_{\delta A}\|_2=\varepsilon/2\, .$$
 Assume now that $H_{\tilde A}$ is irreducible. As $\tilde A \neq A$ we have  $H_{\tilde A}\neq H_A$ and, by the Perron-Frobenius theorem applied to $H_A$ and $H_{\tilde A}$, we deduce the strict inequality $\nu(\tilde A)>\nu(A)$.  This completes the proof of the first point. 

 To address the second statement note that, as  $A$ is irreducible and symmetric, 
  $\nu(A)=\rho(A)$ is a simple eigenvalue and the corresponding eigenvector $x$ with $x^Tx=1$ is entry-wise positive. By Theorem \ref{thm:PF_for_F(A)}, $\nu(\tilde A)$ is a positive eigenvalue of $H_{\tilde A} = H_A + H_{\delta A}= A + H_{\delta A}$ that coincides with its spectral radius. Let $M=(\uno_m\uno_n^T+\uno_n\uno_m^T)/2$. If we write $H_{\tilde A}=A(\varepsilon) = A+\varepsilon M$ as a function of $\varepsilon$ then, by a standard eigenvalue perturbation argument (see, e.g., \cite{wilkinson1965algebraic}), there exist smooth curves $\lambda(\varepsilon)$ and $x(\varepsilon)$ such that $A(\varepsilon)x(\varepsilon)=\lambda(\varepsilon)x(\varepsilon)$ and $\lambda(0)=\nu(A)$, $x(0)=x$. If we differentiate the eigenvalue equation for $\lambda(\varepsilon)$ and we evaluate it at $\varepsilon=0$ we obtain 
  $$
  \left(M-\lambda'(0)I \right)x + \left(A-\nu(A)I \right)x'(0)=0 
  $$ 
  and, by multiplying this identity on the left by $x^T$, we get $\lambda'(0)=x^TMx=x_mx_n$. This, together with 
  $$
  \lambda(\varepsilon) = \lambda(0)+\lambda'(0)\varepsilon+O(\varepsilon^2)\, ,
  $$
  implies that $\nu(A)$ is perturbed into
  %there exists $\tilde \lambda$, eigenvalue of $\tilde A$, such that 
 \begin{equation*}\label{eq:a}
  \lambda(\varepsilon)= \nu(A) + \varepsilon x_mx_n +O(\varepsilon^2)\, . %\nu(A)+ (x^T\delta Ax) + O(\|\delta A\|_2^2)= \nu(A) + \varepsilon x_mx_n +O(\varepsilon^2) \, .
 \end{equation*}
 Now, let $y_1, \dots , y_{n-1}, x$ be the orthonormal eigenvectors of $A$ corresponding to the eigenvalues $\lambda_1 \leq \dots \leq \lambda_{n-1} < \lambda_n = \nu(A)$. Then, as $f$ is nonnegative, for any $1\leq i\leq n$, we have 
 $$
 f(A)_{ii}=f(\nu(A))x_i^2 + \sum_{j=1}^{n-1}f(\lambda_j)(y_j^T \uno_i)^2\geq f(\nu(A))x_i^2\, ,
 $$
with $\uno_i$ being the $i$-th canonical vector.  
 As a consequence $x_i\leq  \sqrt{f(A)_{ii}/f(\nu(A))}$ and, in particular, 
 \begin{equation*}\label{eq:x_i}
 x_mx_n\leq \frac{\sqrt{f(A)_{mm}f(A)_{nn}}}{f(\nu(A))} \, .
 \end{equation*}
  To conclude the proof we observe that $\nu(\tilde A) =\lambda(\varepsilon)$.
%By Theorem \ref{thm:PF_for_F(A)}, $\nu(\tilde A)$ is a positive eigenvalue of $H_{\tilde A} = H_A + H_{\delta A}= A + H_{\delta A}$ that coincides with its spectral radius. 

Let us order the eigenvalues of $H_{\tilde A}=A+H_{\delta A}$ % and $A$ 
as $\mu_1 \leq \cdots \leq \mu_n = \nu(\tilde A)$. %and $\nu(A)=\lambda_n\geq \cdots \geq \lambda_1$, respectively. % of $A$.
For any $k$, let $U_k$ be the $k$-dimensional subspace spanned by the eigenvectors of $\lambda_1, \dots, \lambda_k$. Then
$$
\mu_k = \min_{\mathrm{dim}\, U =\,k}\max_{y\in U}\frac{y^T(A+H_{\delta A})y}{y^T y} \leq \max_{y\in U_k}\frac{y^T(A+H_{\delta A})y}{y^T y} \leq \lambda_k + \|H_{\delta A}\|_2 .
$$
Therefore any eigenvalue $\mu_k$ of $H_{\tilde A}$ is upper-bounded by $\lambda_k + \varepsilon/2$. 
%$Since $A$ is symmetric, by the Bauer-Fike theorem we have that every eigenvalue of $H_{\widetilde A} = A + H_{\delta A}$ differ by at most $\|H_{\delta A}\| =\varepsilon/2$ from an eigenvalue of $A$. 
%
Since $\mathrm{gap} \geq \varepsilon/2 $ by assumption we deduce that $\mu_{n-1}\leq \lambda_n= \nu(A)$. Together with the inequality $\nu(\tilde A)>\nu(A)$ shown by the first point of this theorem we obtain $\nu(\tilde A)=\lambda(\varepsilon)$, as claimed. 
\end{proof}
\color{black}

\subsection{Computing node distances by Krylov methods}\label{sec:lanczos:dist}
The bounds proposed so far rely on the geodesic distances  between pair of nodes in the graph. Computing such distances is a classical problem in graph theory and several efficient and parallelizable algorithms are available \cite{thomas2001introduction}. In this section, however, we propose a simple numerical strategy that exploits the computation of $f(A)_{k\ell}$,  to simultaneously approximate the   distances $\dist_G(m,\ell)$ and $\dist_G(k,m)$ for any node $m$, at essentially no additional cost. 
As we will discuss in what follows, this procedure is well suited for undirected graphs and allows to compute small distances exactly, whereas it provides a lower bound when $\dist_G(m,\ell)$ (resp.\ $\dist_G(k,m)$) is too large.  

Computing the $f$-centrality or the $f$-communicability of nodes and edges of a network can be a computationally expensive task, especially for large graphs. However, when only few entries of $f(A)$ are needed, an established and  efficient strategy to address these quantities exploits the fact that  $f(A)_{k\ell}$ can be written as the quadratic form $\uno_k^T f(A)\uno_\ell$ and  thus employs Lanczos-type algorithms \cite{lanczos:50,lanczos:52} both for symmetric \cite{benzi:boito:2010, golub_meurant_2009} and non-symmetric matrices \cite{fenu:2013, FreHoc93, PozPraStr16}. 

Given a vector $b$ and a matrix $A$, the Krylov subspace $\mathcal K_n(A,b)$ is the vector space spanned by $\{b, Ab, \dots, A^{n-1}b\}$.
The non-Hermitian Lanczos algorithm produces two bases $\{v_0,\dots,v_{n-1}\}$ and $\{w_0,\dots,w_{n-1}\}$ for  $\mathcal{K}_n(A,v_0)$ and $\mathcal{K}_n(A^T,w_0)$, respectively.
If no breakdowns arise, the $j$-th vectors $v_j$ and $w_j$ are obtained at the  $j$-th step of the algorithm.
Moreover, they are biorthogonal ($w_j^*v_j = 1$) and such that
$$ v_j = p_j(A)v_0 \quad \text{ and } \quad w_j = \bar{p}_j(A^*)w_0, $$
with $p_j$ a polynomial of degree exactly $j$ and
$\bar{p}_j$ the polynomial with conjugated coefficients.
We remark that the Hermitian Lanczos algorithm can be used when $A$ is symmetric and $v_0 = w_0$. For symmetric matrices and the case $v_0\neq w_0$, a similar strategy can be employed (see \cite[Chapter\ 7]{golub_meurant_2009} for details). 
In the following we treat only the non-Hermitian Lanczos algorithm. Everything can be easily transferred to the Hermitian case by letting $w_j = v_j$. 

In order to approximate $f(A)_{k\ell}$ the method requires to set $v_0 = \uno_\ell$ and $w_0 = \uno_k$.
We then get the following result:
\begin{theorem}
    Let $A\geq 0$ be the $N \times N$ adjacency matrix of $G=(V,E,\omega)$ and let $\{v_0,\dots,v_{n-1}\}$ and $\{w_0,\dots,w_{n-1}\}$ be the basis of $\mathcal{K}_n(A,\uno_\ell)$ and $\mathcal{K}_n(A^T,\uno_k)$, respectively, 
   obtained by the non-Hermitian Lanczos algorithm. % with  $\mathbf{v}_0 = $ and $\mathbf{w}_0 = $.
   Then for every $m=1,\dots,N$ the distance $\dist_G(m,\ell)$ (resp.\ $\dist_G(k,m)$) is equal to the first index $j$ for which the $m$-th element of $v_j$ (resp.\ $w_j$) is nonzero.
\end{theorem}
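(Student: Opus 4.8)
The plan is to connect the Lanczos vectors $v_j$ to polynomials evaluated at $A$ and then invoke the vanishing property of matrix powers, namely \eqref{eq:matrix:power}, which says $(A^n)_{k\ell}=0$ whenever $n<\dist_G(k,\ell)$. Since $v_j = p_j(A)v_0 = p_j(A)\uno_\ell$ with $p_j$ a polynomial of degree exactly $j$, the $m$-th entry of $v_j$ is $\uno_m^T p_j(A)\uno_\ell = \sum_{i=0}^{j} c_i (A^i)_{m\ell}$ for suitable coefficients $c_i$ with $c_j\neq 0$. First I would observe that for $j<\dist_G(m,\ell)$ every term $(A^i)_{m\ell}$ with $i\leq j$ vanishes by \eqref{eq:matrix:power}, so $(v_j)_m=0$; hence the first index at which $(v_j)_m$ can be nonzero is at least $\dist_G(m,\ell)$.

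The reverse inequality is the crux: I must show that at $j=d:=\dist_G(m,\ell)$ the entry $(v_d)_m$ is actually nonzero and does not accidentally cancel. Here I would use that $A\geq 0$ has nonnegative entries together with the fact that the leading coefficient of $p_d$ is nonzero. Write $(v_d)_m = \sum_{i=0}^{d} c_i (A^i)_{m\ell}$; all terms with $i<d$ vanish by \eqref{eq:matrix:power}, leaving $(v_d)_m = c_d (A^d)_{m\ell}$. Now $(A^d)_{m\ell}$ is the weighted number of walks of length $d=\dist_G(m,\ell)$ from $m$ to $\ell$, which is a sum of strictly positive walk weights (there is at least one shortest walk, and $\omega>0$), hence $(A^d)_{m\ell}>0$. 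Since $c_d\neq 0$, we conclude $(v_d)_m\neq 0$. Combining with the previous paragraph, $\dist_G(m,\ell)$ is exactly the first index $j$ with $(v_j)_m\neq 0$. The identical argument applied to $w_j = \bar p_j(A^*)w_0 = \bar p_j(A^T)\uno_k$ gives, using $(w_j)_m = \sum_{i=0}^j \bar c_i (A^T)^i_{mk} = \sum_i \bar c_i (A^i)_{km}$ and \eqref{eq:matrix:power} (with the pair $(k,m)$), that $\dist_G(k,m)$ is the first index $j$ with $(w_j)_m\neq 0$; note $A\geq 0$ is real so $\bar c_d$ is just the (possibly complex) leading coefficient, still nonzero.

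The main obstacle I anticipate is the non-cancellation step, i.e. ruling out that $(v_d)_m=0$ even though it is a single nonzero-coefficient times a quantity. As sketched above this is clean precisely because the degree-$d$ truncation kills all lower-order monomials via \eqref{eq:matrix:power}, so only one monomial survives and its coefficient $c_d$ is nonzero by construction of the Lanczos polynomials (degree exactly $j$) while $(A^d)_{m\ell}>0$ by nonnegativity of $A$ and existence of a shortest walk. A minor caveat worth stating is that this requires the Lanczos iteration to proceed without breakdown up to step $\max_m \dist_G(m,\ell)$ (respectively for $w$), which is exactly the hypothesis under which the bases $\{v_0,\dots,v_{n-1}\}$, $\{w_0,\dots,w_{n-1}\}$ are assumed to exist; if $n$ is smaller than some distances, the scheme yields only a lower bound (entries still zero), which matches the discussion preceding the theorem.
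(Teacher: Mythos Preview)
Your argument is correct and essentially identical to the paper's proof: both use that $v_j=p_j(A)\uno_\ell$ with $p_j$ of degree exactly $j$, invoke the vanishing of $(A^i)_{m\ell}$ for $i<\dist_G(m,\ell)$ so that only the leading monomial survives at $j=\dist_G(m,\ell)$, and conclude from $c_d\neq 0$ and $(A^d)_{m\ell}\neq 0$. Your write-up is slightly more detailed (explicitly using $A\geq 0$ and $\omega>0$ to get strict positivity, and spelling out the $w_j$ case and the no-breakdown caveat), but the route is the same.
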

\begin{proof}
\begin{rev}
We prove the result for $\dist_G(m,\ell)$. 
Moreover, since the case $m=\ell$ is trivial (the $\ell$-th element of $v_0$ is nonzero) we assume $m\neq \ell$.
Since $(A^j)_{m, \ell}$ is the overall weight of the walks of length $j$ from $m$ to $\ell$, we get $(p_j(A))_{m,\ell} = 0$ for every $ j < \dist_G(m,\ell)$. 
Finally, if $j = \dist_G(m,\ell)$, then $(A^{i})_{m, \ell} = 0$ for $i=0,\dots,j-1$ and $(A^{j})_{m, \ell} \neq 0$. Hence, there exists $\alpha\neq 0$ such that  $(p_j(A))_{m,\ell} = \alpha (A^j)_{m, \ell} \neq 0$. %for some $\alpha \neq 0$. Indeed, $p_j$ has degree $j$ and $(A^{i})_{m, \ell} = 0$ for $i=0,\dots,j-1$.
\end{rev}
\end{proof}
%\color{blue}
%Notice that since the $\ell$-th element of $v_0$ is nonzero the Theorem can be extend to estimate the distance from $\ell$ to $\ell$ as zero. This is not consistent with the definition of $\dist_G$ we gave. However, it is still a meaningful definition of distance which we will use in the following algorithm.
%\color{black}

Therefore, we can modify the non-Hermitian Lanczos algorithms to compute the distance vectors 
$$ \overrightarrow{d}_k = \left ( \begin{array}{c}
                           \dist_G(k,1) \\
                           \vdots \\
                           \dist_G(k,N)
                          \end{array}
 \right) , 
 \qquad \overleftarrow{d}_\ell = \left ( \begin{array}{c}
                           \dist_G(1,\ell) \\
                           \vdots \\
                           \dist_G(N,\ell)
                          \end{array}
 \right). $$
The idea is to add the following pseudo-code to the Lanczos algorithm (we assume to stop it at the $(n-1)$-th iteration).

First, initialize the variables \verb is_zero_k , \verb is_zero_l , \verb d_k  and \verb d_l   as follows

\begin{samepage}
\noindent\rule[.0em]{\textwidth}{.5pt}
\begin{verbatim}
  for i=1,...,N
    is_zero_k(i) = TRUE
    is_zero_l(i) = TRUE
    d_k(i) = n   % vector of the distances from k
    d_l(i) = n   % vector of the distances to l
  end
  is_zero_k(k) = FALSE
  is_zero_l(l) = FALSE
  d_k(k) = 0   
  d_l(l) = 0   
\end{verbatim}
\noindent\rule[.7em]{\textwidth}{.5pt}
\end{samepage}
\noindent and then  modify the method by adding the procedure below,  to derive the distances from the nonzero pattern of $v_j$ and $w_j$, at each step of the scheme

\begin{samepage}
\noindent\rule[.0em]{\textwidth}{.5pt}
\begin{verbatim}
  for j=1,...,n-1  % Iteration of Lanczos algorithm
    compute vector v_j and  w_j
    for m=1,...,N
      if v_j(m) > 0 && is_zero_k(m)
        d_k(m) = j
        is_zero_k(m) = FALSE
      end
      if w_j(m) > 0 && is_zero_L(m)
        d_l(m) = j
        is_zero_l(m) = FALSE
      end
    end
    proceed with the rest of the algorithm
  end         
\end{verbatim}
\noindent\rule[.7em]{\textwidth}{.5pt}
\end{samepage}

Notice that if $n$ is smaller or equal than the diameter of the graph, there can be null elements in $v_j$ for $j=0,\dots,n-1$.
Nevertheless, for all these elements, $n$ is a lower bound for the distance, which can then be used  to  approximate $\delta$ in Theorem \ref{thm.decay.tau}.  On the other hand, let us remark that many real-world networks have a small diameter, thus  we expect the proposed technique to be able to actually compute the desired distances in typical applications. Also note that by using this strategy, computing the diagonal of $f(A)$ allows to simultaneously address the all-pair shortest-path distances of the graph. This is particularly effective when dealing with undirected graphs. In that case, in fact, the entries $f(A)_{kk}$ can be computed with the Hermitian Lanczos method which further ensures no breakdowns. 
%
%
%
%
% Ste io non capisco cosa vuoi dire qui e perche' XD :D
Finally, note that the proposed modified Lanczos method relies on the the transition of the variables \verb+v_j(m)+ and \verb+w_j(m)+ from zero to nonzero. This is not an issue as no round-off error can arise in the computation of \verb+v_j(m)+ (resp.\ \verb+w_j(m)+) until $j<\dist_G(k,m)$. This is because the entry \verb+v_j(m)+ is always given by the sum of products of quantities being exactly zero. Hence, numerical bias may affect the method only if, when $j = \dist_G(k,m)$, the value of \verb+v_j(m)+ is smaller than the machine precision. %numerical errors in the computation of the distance may arise only if, when $j = \dist_G(k,m)$, the value of \verb+v_j(m)+ is smaller than the machine precision. The situation is analogous for \verb+w_j(m)+. This observation shows that, typically, we expect a clear-cut transition of the variables \verb+v_j(m)+ and \verb+w_j(m)+ from exactly zero to nonzero. 
% , let us point
% Until now we have considered exact computation. 
% Nevertheless, 
% if $j < \dist_G(k,m)$ then at the $j$-th step the entry \texttt{v\_j(m)} is always given by the sum of products of exactly zero elements.
% Therefore, no round-off errors can arise in the computation of \texttt{v\_j(m)} until $j<\dist_G(k,m)$. 
% Hence, the only error in the computation of the distance may appear if at the step $n=\dist_G(k,m)$ the value of \texttt{v\_j(m)} is smaller than machine precision.
%
%
%
%
%
Table \ref{tab:dist} in the next section shows how this strategy  behaves on four example undirected networks, where the diagonal of the exponential function is approximated by the Hermitian Lanczos method and the number of maximal iterations varies.

\section{Numerical examples}\label{sec:experiments}
In this section we illustrate the behavior of the proposed bounds on some example networks.

The first explanatory example graph we consider is represented in  Figure \ref{fig:2cycles}, where we provide a pictorial representation of the graph before and after the perturbation and where we highlight important quantities such as the sets $S$ and $T$ and the distances from and to such sets. 
The considered graph $G$ is made by two simple cycles (closed undirected paths) with $111$ nodes each,
and by one directed edge $\overrightarrow{e}$ from node $111$ to node $112$. 

\begin{figure}[htb]
\definecolor{trueblue2}{RGB}{0, 115, 230} \definecolor{magentix}{RGB}{233,76,111}
\begin{tikzpicture}[scale=.35, baseline={(1,0)}]
\node at (-3,-3) {$G$};
\draw[thick]  (0,0) circle (3cm);
\node[circle, draw=black, thick, fill=white,  scale=.8, inner sep=1] (111) at (10*360/10: 3cm) {\footnotesize $111$};
\node[circle, draw=black, thick, fill=white,  scale=.8, inner sep=1] (110) at (360/10: 3cm)    {\footnotesize $110$};
\node[minimum size=.58cm,circle, draw=black, thick, fill=white,  scale=.8, inner sep=1] (1) at (9*360/10: 3cm)  {\footnotesize $1$};
\node[minimum size=.58cm,circle, draw=black,thick,fill=white,scale=.8, inner sep=1] (2) at (8*360/10: 3cm)  {\footnotesize $2$};
\node[minimum size=.58cm, circle, draw=black, thick, fill=white,  scale=.8, inner sep=1] (k) at (4*360/10: 3cm) {\footnotesize $k$};
\draw[dashed,draw=white,very thick] (4.3*360/10:3cm) arc (4.3*360/10:7.7*360/10:3cm);
\draw[dashed,draw=white,very thick] (1.35*360/10:3cm) arc (1.35*360/10:3.7*360/10:3cm);
\draw[thick]  (10,0) circle (3cm);
\node[circle,draw=black,thick,fill=white,scale=.8,inner sep=1] (112) at  ([shift={(10,0)}] 180:3cm) {\footnotesize $112$};
\node[circle,draw=black,thick,fill=white,scale=.8,inner sep=1] (113) at ([shift={(10,0)}] 180-360/10: 3cm){\footnotesize $113$};
\node[circle,draw=black, thick,fill=white,  scale=.8,inner sep=1] (114) at ([shift={(10,0)}] 180-360/5: 3cm){\footnotesize $114$};
\node[circle,draw=black, thick,fill=white,  scale=.8,inner sep=1] (222) at ([shift={(10,0)}] 180+360/10:3cm){\footnotesize $222$};
\draw[dashed,draw=white,very thick,shift={(10,0)}] (193+360/10:3cm) arc (193+360/10:180+7.7*360/10:3cm);
\draw[thick,-{>[scale=1.5]}] (111) to [out=30,in=150] node[below]{$\overset{\rightarrow}{e}$} (112);
\draw[thick,draw=trueblue2,-{>[scale=1.5]}] (10*360/10:2cm) arc (360: 360+4*360/10: 2cm);
\draw[thick,draw=trueblue2,-{>[scale=1.5]}] (360+4*360/10:3.9cm) arc (360+4*360/10: 375: 3.9cm) to [out=20,in=120] (112);
\node (d1) at (4*360/10:1.1cm) {\color{trueblue2} \footnotesize $\dist_G(T,k)$};
\node (d2) at (2*360/15:5cm) {\color{trueblue2} \footnotesize $\dist_G(k,S)$};
\end{tikzpicture}\hfill
\begin{tikzpicture}[scale=.35, baseline={(1,0)}]
\node at (-3,-3) {$\tilde G$};
\draw[thick]  (0,0) circle (3cm);
\node[circle, draw=black, thick, fill=white,  scale=.8, inner sep=1] (111) at (10*360/10: 3cm) {\footnotesize $111$};
\node[circle, draw=black, thick, fill=white,  scale=.8, inner sep=1] (110) at (360/10: 3cm)    {\footnotesize $110$};
\node[minimum size=.58cm,circle, draw=black, thick, fill=white,  scale=.8, inner sep=1] (1) at (9*360/10: 3cm)  {\footnotesize $1$};
\node[minimum size=.58cm,circle, draw=black,thick,fill=white,scale=.8, inner sep=1] (2) at (8*360/10: 3cm)  {\footnotesize $2$};
%\node[minimum size=.58cm, circle, draw=black, thick, fill=white,  scale=.8, inner sep=1] (k) at (4*360/10: 3cm) {\footnotesize $k$};
% \draw[dashed,draw=white,very thick] (4.3*360/10:3cm) arc (4.3*360/10:7.7*360/10:3cm);
% \draw[dashed,draw=white,very thick] (1.35*360/10:3cm) arc (1.35*360/10:3.7*360/10:3cm);
\draw[dashed,draw=white,very thick] (1.35*360/10:3cm) arc (1.35*360/10:7.7*360/10:3cm);

\draw[thick]  (10,0) circle (3cm);
\node[circle,draw=black,thick,fill=white,scale=.8,inner sep=1] (112) at  ([shift={(10,0)}] 180:3cm) {\footnotesize $112$};
\node[circle,draw=black,thick,fill=white,scale=.8,inner sep=1] (113) at ([shift={(10,0)}] 180-360/10: 3cm){\footnotesize $113$};
\node[circle,draw=black, thick,fill=white,  scale=.8,inner sep=1] (114) at ([shift={(10,0)}] 180-360/5: 3cm){\footnotesize $114$};
\node[circle,draw=black, thick,fill=white,  scale=.8,inner sep=1] (222) at ([shift={(10,0)}] 180+360/10:3cm){\footnotesize $222$};
\draw[dashed,draw=white,very thick,shift={(10,0)}] (193+360/10:3cm) arc (193+360/10:180+7.7*360/10:3cm);
\draw[thick,-{>[scale=1.5]}] (111) to [out=30,in=150] (112);
\draw[thick,-{>[scale=1.5]},draw=magentix] (112) to [out=210,in=330] node[below]{\color{magentix}$\overset{\leftarrow}{e}$} (111);
\draw[thick,yellow,fill=yellow,opacity=0.2] (111.center) ellipse (1.3cm and 1cm);
\draw[thick,yellow,fill=yellow,opacity=0.2] (112.center) ellipse (1.3cm and 1cm);
\node (T) at ([shift={(-1.5,0)}] 111.center)  {$T$}; 
\node (S) at ([shift={(1.5,0)}] 112.center)  {$S$}; 
\end{tikzpicture}
\caption{This figure shows an example network $G$ (left) formed by two cycles connected by a directed ``bridge'' $\overset{\rightarrow}{e}$. To such network we add the edge $\overset{\leftarrow}{e}$ in the opposite direction (colored in red) obtaining $\tilde G$, on the right. We highlight important quantities arising in our work: $\dist_G(k,S)$ and $\dist_G(T,k)$ are shown in blue, whereas the sets $S=\{112\}$ and $T=\{111\}$ are colored  in yellow.}
\label{fig:2cycles}
\end{figure}
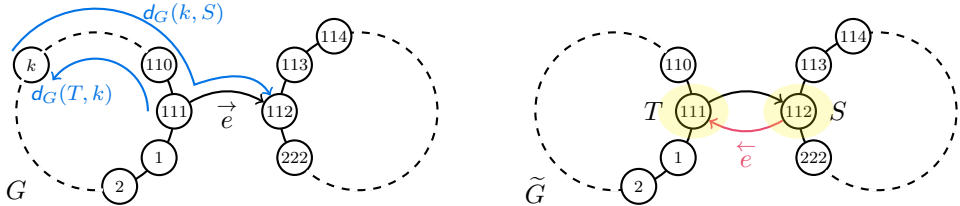

Since there are no closed walks passing through $\overrightarrow{e}$, all the nodes in $G$ have the same $f$-centrality. The graph is then perturbed by the insertion of one single new directed edge $\overleftarrow{e}$ from node $112$ to node $111$. This new edge ``closes the two-directional bridge''  between the two circles, resulting into a perturbation of the $f$-centrality scores of the nodes.  
In Figure \ref{fig:2cycles:bounds} 
we plot in red crosses the values $|\exp(A)_{kk} - \exp(\tilde A)_{kk} |$  (left plot) 
and those of $|r_\alpha(A)_{kk} - r_\alpha(\tilde A)_{kk} |$ with \note{
$\alpha = 1/3$ } (right plot),
for $k=1,\dots,222$.
With blue circles, instead, we represent the bound in Corollaries \ref{cor.exp} (left plot) and \ref{cor.res} (right plot)
for every admissible $k$. 
As we can see the behavior of the decays of the differences is well approximated by the bounds. 
Moreover, we observe that the exponential decay for the $\exp$-centrality variation as well the linear decay of the $r_\alpha$-centrality are captured by the bound.
%
% Trim : l b r t
\begin{figure}[h!]
 \centering
 \includegraphics[width=.45\textwidth, trim =  18mm 1mm 33mm 5mm, clip]{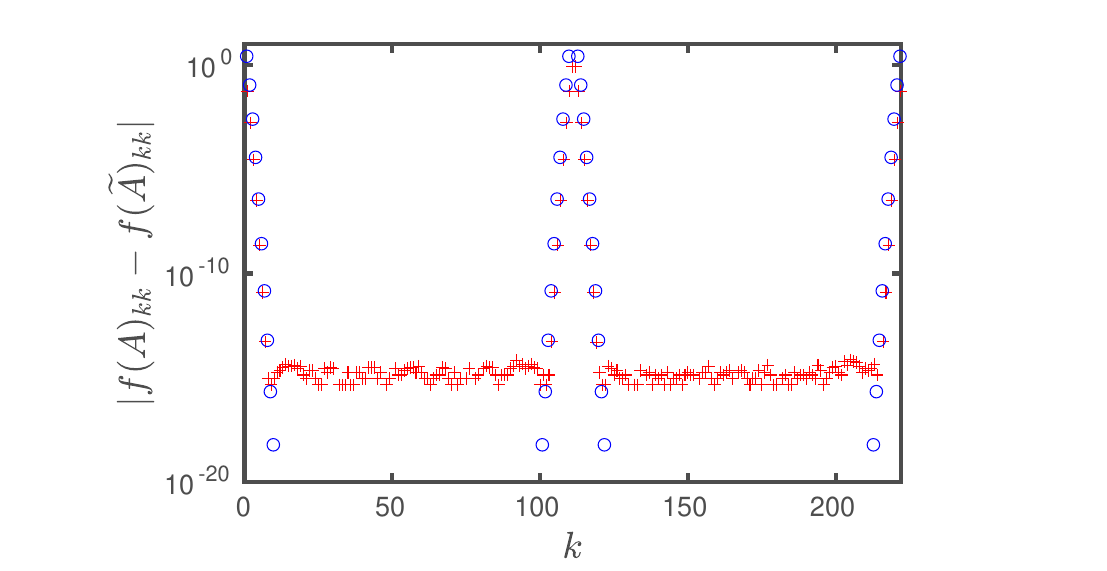}\hspace{1.5em}
 \includegraphics[width=.45\textwidth, trim =  18mm 1mm 33mm 4mm, clip]{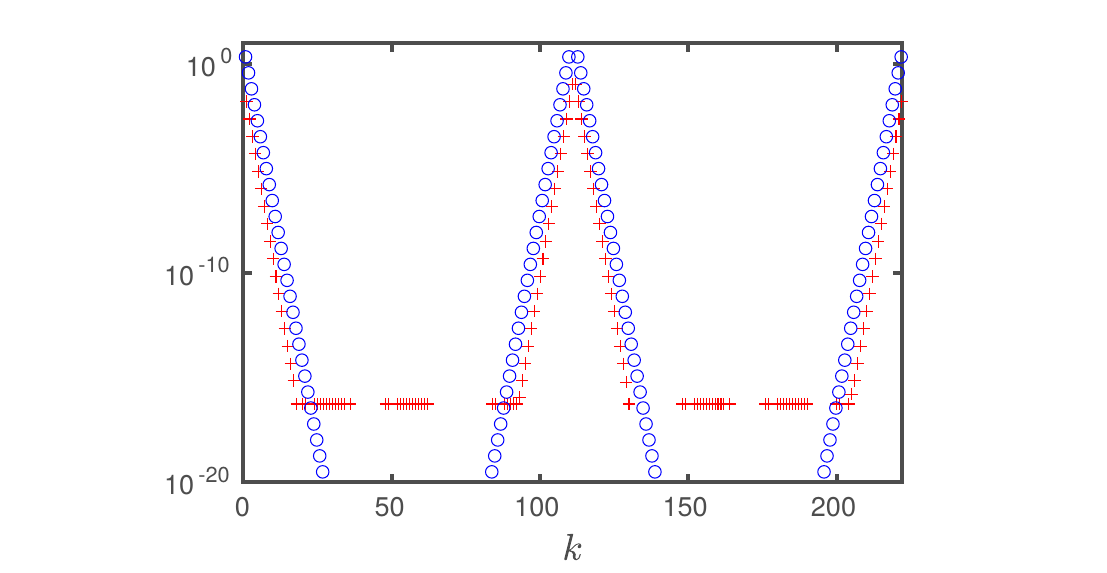}\hspace{.5em}%\\
 %$\,$\hfill $\exp(A)$ \hfill \hfill $r_\alpha(A)$\hfill $\, $
 \caption{In the ordinate: $|f(A)_{kk} - f(\tilde A)_{kk} |$ (red crosses) when adding the edge $\overset{\leftarrow}{e}$ in the graph of Figure \ref{fig:2cycles}
and the bounds (blue circles) of Corollaries \ref{cor.exp} and \ref{cor.res}.
In the abscissa: $k$, the nodes of the graph. From left to right: variation of exponential centrality $f(x)=\exp(x)$;  variation of resolvent centrality  \note{$f(x)=r_{1/3}(x)$.}}\label{fig:2cycles:bounds}
\end{figure}
 
The second example is borrowed from a real-world data set representing the London city transportation network~\cite{DDSBA14,DataDD}. The undirected weighted network that we consider here is the aggregate version of the original multi-layer network. The 369 nodes correspond to train stations and the existing routes between them are the edges. Each edge is weighted with an integer number that accounts for the number of train connections between the stations (overground, underground, DLR, etc.). A picture representing this network is shown in Figure \ref{fig:london_drawings} (left). The color of the edges is darker if the weight is higher and the size of the nodes  is proportional to the importance they are given by the $\exp$-centrality score.  We perform two experiments on this network with the goal of exploring both the roles of the weight function and of the edge topology in the proposed bounds. We subdivide the two cases into two subsections.

\begin{figure}[t]
    \begin{tikzpicture}[scale=.89]
    \node[anchor=south west,inner sep=0] at (0,0) {\includegraphics[width=.32\textwidth, trim = 2cm 1.5cm 2cm 1cm, clip]{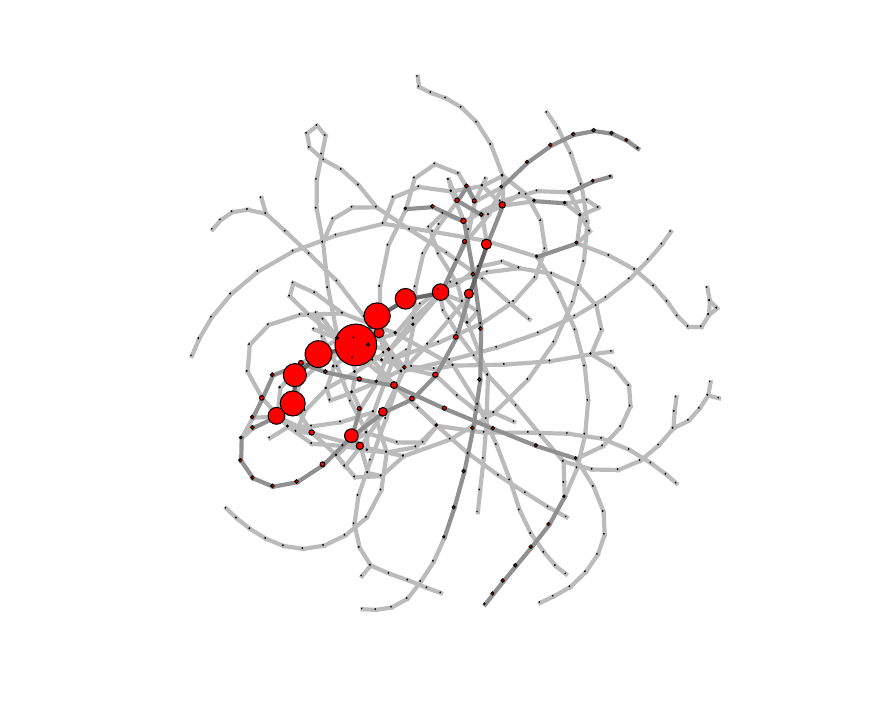}};
    \end{tikzpicture}
    \hfill
    \begin{tikzpicture}[scale=.89]
    \node[anchor=south west,inner sep=0] at (0,0) {\includegraphics[width=.32\textwidth, trim = 2cm 1.15cm 2cm 1cm, clip]{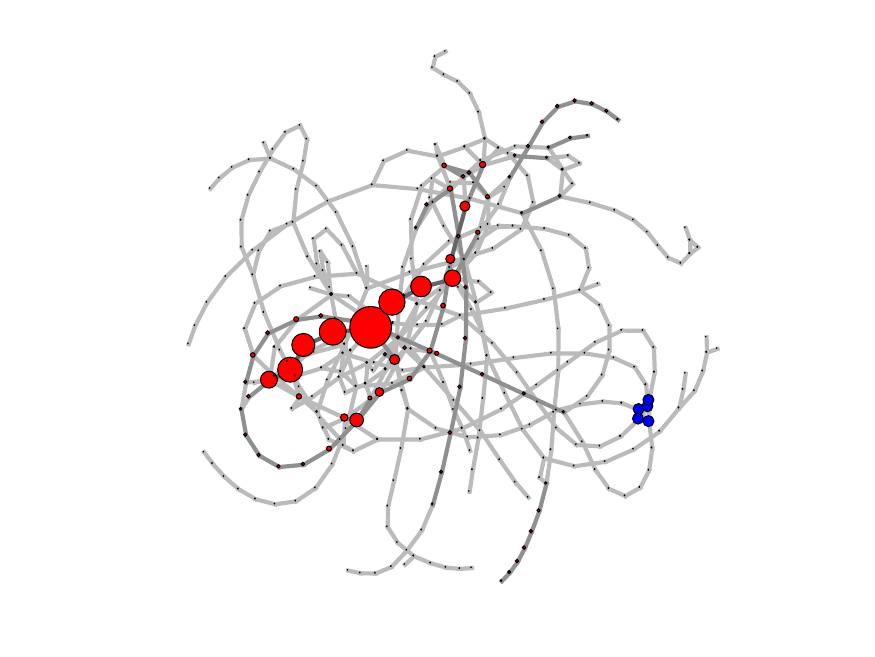}};
    \draw[draw=yellow, fill=yellow, opacity=0.3] (3.9,1.3) circle (.28); %S & T
    \end{tikzpicture}
    \begin{tikzpicture}[scale=.89]
    \node[anchor=south west,inner sep=0] at (0,0) {\includegraphics[width=.32\textwidth, trim = 2cm 1.15cm 2cm 1cm, clip]{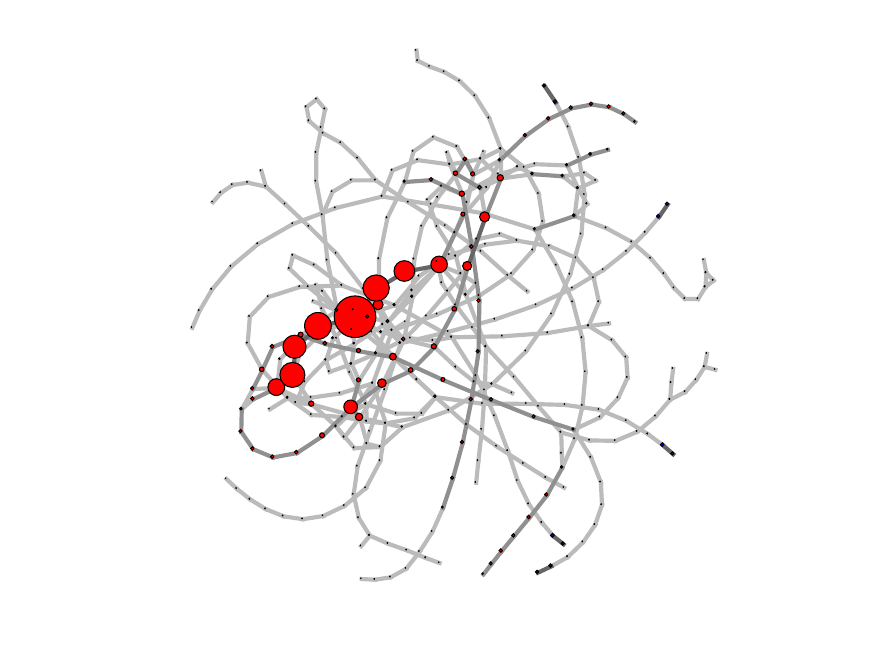}};
    \draw[draw=yellow, fill=yellow, opacity=0.3] (4,1) circle (.28); %S & T
    \draw[draw=yellow, fill=yellow, opacity=0.2] (3.1,.2) circle (.28); %S & T
    \draw[draw=yellow, fill=yellow, opacity=0.2] (3.2,.4) circle (.28); %S & T
    \draw[draw=yellow, fill=yellow, opacity=0.3] (3.1,3.7) circle (.28); %S & T
    \draw[draw=yellow, fill=yellow, opacity=0.3] (4,2.8) circle (.28); %S & T
    \end{tikzpicture}
    \caption{Graph drawing of the weighted aggregate London city transportation multi-layer network. The color of the edges is darker proportionally to the corresponding weight. Left: Original network $G$; Center:  network $\tilde G$ obtained from $G$ by inserting a clique of edges connecting the 5 least ranked nodes of $G$, according to the $\exp$-centrality measure; Right: network $\tilde G$ obtained from $G$ by changing the weight of the edges involving the 5 least ranked nodes of $G$. Modified nodes and edges are highlighted with a yellow circle. The size of the nodes is proportional to their importance according to the $\exp$-centrality of the corresponding graph ($G$ and $\tilde G$, respectively). } \label{fig:london_drawings}
\end{figure}
\subsubsection*{Case 1: Adding new edges} In the first experiment we perturb the original network $G$ by fully  connecting the 5 nodes $\{i_1,\dots,i_5\}$ with smallest $\exp$-centrality. The graph drawing of this modified network $\tilde G$ is shown in Figure \ref{fig:london_drawings} (center). As the network is undirected and we add a fully connected subgraph,  the sets $S$ and $T$ both coincide with the nodes $\{i_1,\dots,i_5\}$. This set is highlighted with a yellow shape in the figure and the corresponding nodes are colored in blue (their size is proportional to their $\exp$-centrality score in $\tilde G$). This drawing shows that the topology of the network as well as the centrality of the top ranked nodes is very poorly affected by the addition of the new edges.  We show in Figure \ref{fig:london_plots_1} further evidence in this sense. The first plot (on the left) shows the {\it intersection similarity}~\cite{FKS03}  between the ranking  of the nodes (based on $\exp$-centrality) before and after the edge modification, whereas the second plot (on the right) shows, as before,  the comparison between the actual values of $|\exp(A)_{kk} - \exp(\tilde A)_{kk}|$  (red crosses)  and the bound given by Corollary \ref{cor.exp} (blue circles). Let us point out that in this plot (and all the following ones), we are relabeling the nodes according with the distance from (and to) the set of modified nodes. Precisely, if $\partial_k(S) =\{\ell\in V: \dist_G(\ell,S)=k\}$ with $|\partial_k(S)|=n_k$, then we label the nodes $V=\{1,\dots, N\}$ of $G$ so that
$$
S = \{1,\dots,5\}, \quad \partial_1(S) = \{6, \dots, n_1+5\}, \quad \partial_{2}(S) = \{n_1+6,\dots, n_1+n_2+5\}\quad \cdots
$$
 As it is reasonable to expect, the proposed  bounds are now less tight than the previous explanatory example of the two circles, but still the decay behavior is  well captured.

For the sake of completeness we recall here that the intersection similarity is a measure used to compare the top $\kappa$ entries of two ranked lists $\ell^{1}$ and $\ell^{2}$ that 
may not contain the same elements.  It is defined as follows: let $\ell^{j}_\kappa$ be the list of the top $\kappa$ elements in $\ell^{j}$, for 
$j=1,2$.  Then, the {\it top $\kappa$ intersection similarity between $\ell^{1}$ and $\ell^{2}$} is defined as 
\begin{equation*}
{\rm isim}_\kappa(\ell^{1},\ell^{2}) =1- \frac{1}{\kappa}\sum_{t=1}^\kappa\frac{|\ell^{1}_t\Delta\ell^{2}_t|}{2\, t},
\label{eq:isim}
\end{equation*}
where $|\ell^{1}_t\Delta\ell^{2}_t|$ denotes the number of elements in the  symmetric difference between $\ell^{1}_t$ and $\ell^{2}_t$. 
When the ordered sequences contained in $\ell^{1}$ and $\ell^{2}$ are completely different, then the intersection similarity between the two is minimum and it is equal to 0, whereas,  the intersection similarity between  $\ell^{1}$ and $\ell^{2}$  is equal to $1$ 
if and only if the two ordered sequences coincide.   
\begin{figure}[t]
    \includegraphics[width=.99\textwidth,trim=3.5cm 5cm 3.8cm 5.5cm,clip]{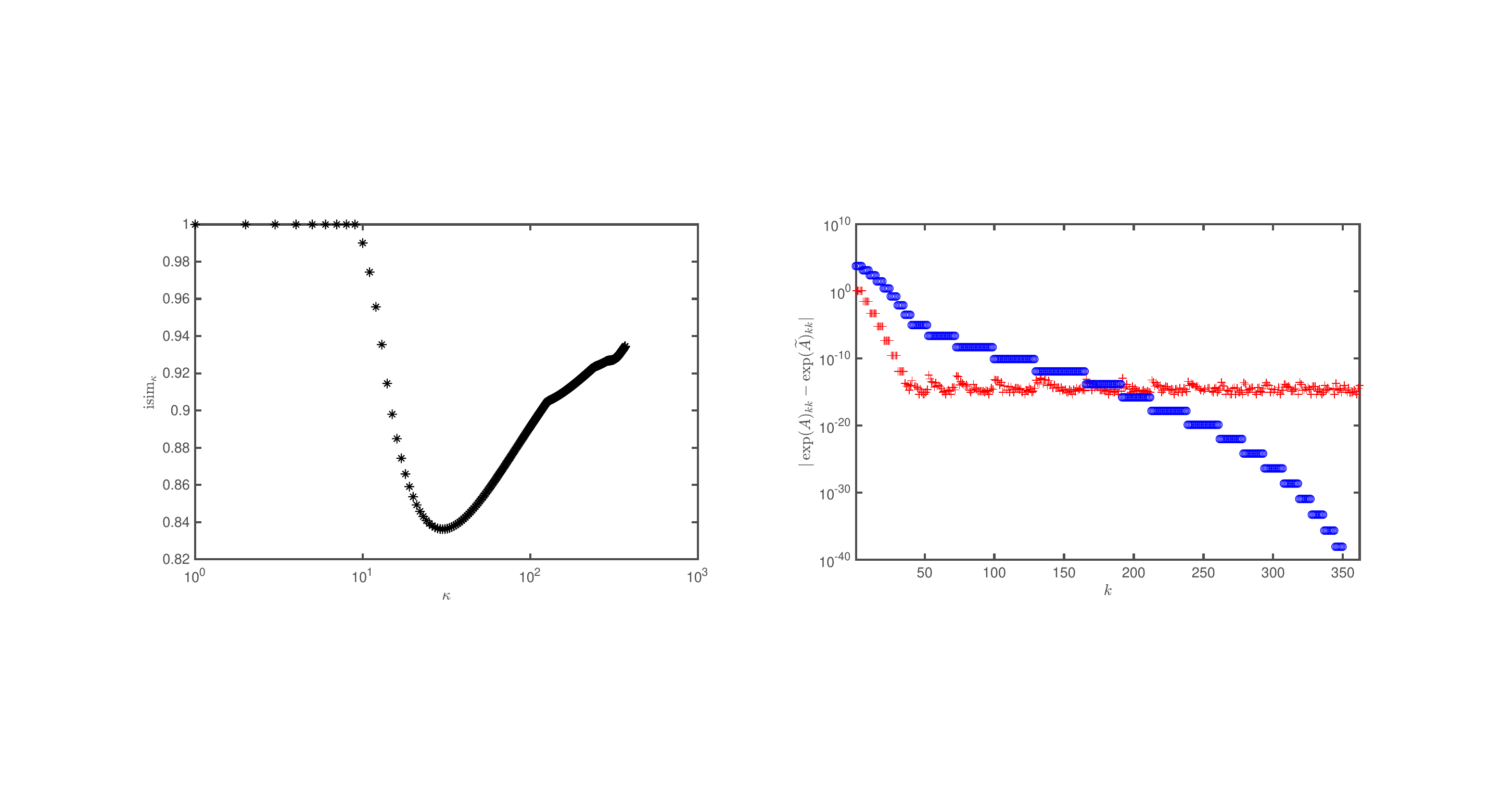}
    \caption{Intersection similarity and variation of $|\exp(A)_{kk} - \exp(\tilde A)_{kk}|$ for the weighted network of London transports considered in Figure \ref{fig:london_drawings} (left and central drawings). \textbf{Left:} Intersection similarity $\mathrm{isim_\kappa}(c_1,c_2)$ where $(c_1)_i = \exp(A)_{ii}$ and $(c_2)_i = \exp(\tilde A)_{ii}$ for $i=1, \dots, N$. \textbf{Right:} Red crosses are the actual difference $|\exp(A)_{kk} - \exp(\tilde A)_{kk}|$ whereas the blue dots are the bound of Corollary \ref{cor.exp}. }
    \label{fig:london_plots_1}
\end{figure}
\subsubsection*{Case 2: Modifying the edge weights} In the  second experiment we change the weight of the edges  that connect the $m$ nodes $\{i_1,\dots,i_m\}$ with least $\exp$-centrality in $G$, for $m\in\{5,15,30\}$. For each $i_j$ we look at the set of its neighbors $\partial_1(i_j) =\{\ell\in V: \dist_G(i_j,\ell)=1\}$ and define the set of perturbed nodes as $S=T=\cup_{j=1}^m \{i_j, \partial_1(i_j)\}$. Then, we modify the weight of each edge  $e$ connecting nodes in $S$  by letting $\tilde \omega(e)=5+\omega(e)$. Note that the sets $S$ and $T$ coincide once again, but they change depending on the choice of $m\in\{5,15,30\}$. In Figure \ref{fig:london_drawings} (right) we show a picture representing the case $m=5$. The set of perturbed nodes $S$ is highlighted with yellow disks. Again we see that the topology and the centrality of the top ranked nodes is very poorly affected in this case. Drawing of the case $m\in \{15,30\}$ are not shown for the sake of brevity.  In Figure \ref{fig:london_plots_2} we show how the actual   values of $|\exp(A)_{kk} - \exp(\tilde A)_{kk}|$  (red crosses)  and the bound given by Corollary \ref{cor.exp} (blue circles) change when the number of perturbed edges $m$ change. The plots there shown correspond (from left to right) to the case $m=5$, $m=15$ and $m=30$, respectively. The plots show evidence of the fact  that the decay behavior is well captured also in these cases. Recall that, as before, nodes in the figure are re-labeled according to the distance with respect to the set of modified nodes.  
\begin{figure}[t]
    \includegraphics[width=.99\textwidth,trim=3.2cm 7cm 3.8cm 7.2cm,clip]{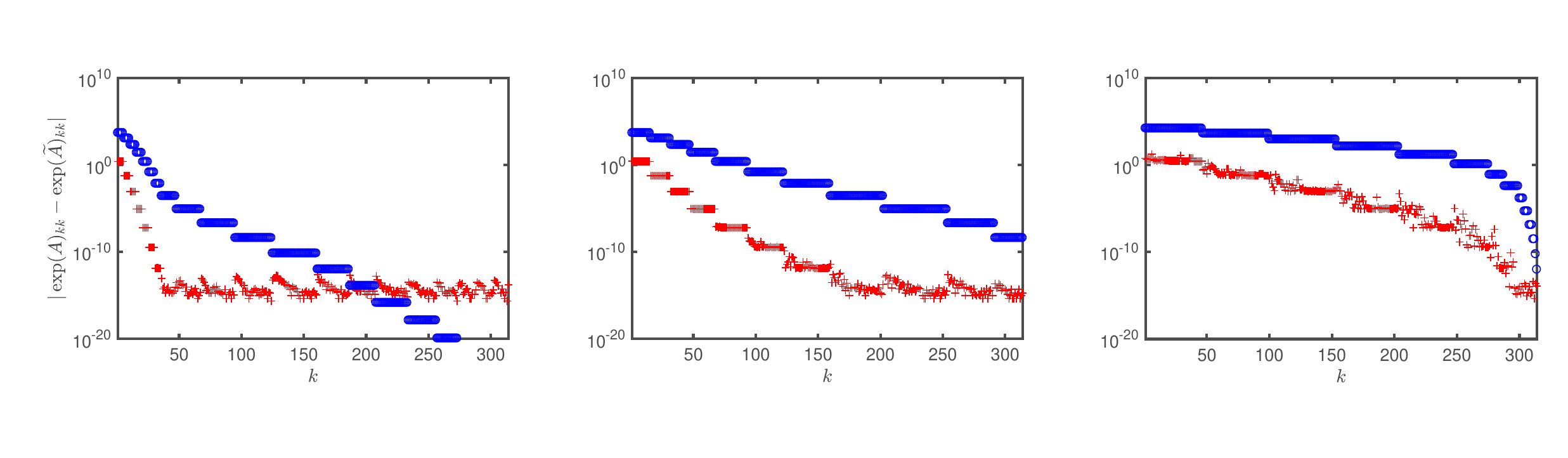}
    \caption{Variation of $|\exp(A)_{kk} - \exp(\tilde A)_{kk}|$ for the weighted network considered in Figure \ref{fig:london_drawings} (left and right drawings). The network is perturbed into $\tilde G$ where the weight of the edges  $e$ that connect the $m$ nodes  with least $\exp$-centrality in $G$ is modified into $\tilde \omega(e) = 5+\omega(e)$. From left to right, the plots show the case $m=5,15,30$, respectively.}\label{fig:london_plots_2} %\textbf{Left:} Intersection similarity $\mathrm{isim_\kappa}(c_1,c_2)$ where $(c_1)_i = \exp(A)_{ii}$ and $(c_2)_i = \exp(\tilde A)_{ii}$ for $k=1, \dots, N$. \textbf{Right:} Red crosses are the actual difference $|\exp(A)_{kk} - \exp(\tilde A)_{kk}|$ whereas the blue dots are the bound of Corollary \ref{cor.exp}. 
    %FIELD OF VALUES $A$ [-6.13, 6.16], and of $\tilde A$ (LEFT TO RIGHT): [-6.13, 6.16], [-6.13, 6.16], [-7.14, 7.14]
\end{figure}

\subsection{Small-world networks}
Finally, in the following we present four examples of larger scale  real-world undirected networks  whose diameter is proportional to the  logarithm of the number of nodes. This feature deserves attention as it is common to many complex networks and is related to the so called ``small-world'' phenomenon \cite{watts1998collective}.  
  We focus here on the analysis of the correlation between the variation of the network  centralities and the variation of the distances in $G$ with respect to the set of perturbed edges. For this reason, normalized adjacency matrices $\mathcal A$ are considered below, so to guarantee the  field of values of both the original and the perturbed matrices to be constrained within the unit segment $[-1,1]$. Thus, in all our experiments below we set $c=0$ and $a=1$ in the formula of  Corollary \ref{cor.exp} for the real interval.

The considered network data are borrowed from \cite{uniflor, snapnets} and are briefly described below:
 \begin{description}
  \item[Gnutella]  A snapshot of the Gnutella peer-to-peer file sharing network in August, 8, 2002. Nodes represent hosts in the Gnutella network topology and edges represent connections between the Gnutella hosts. Number of nodes: 6300, Number of edges: 41297, Diameter: 10;
  \item[Facebook] This dataset consists of anonymous ``friends circles'' from Facebook. Facebook data was collected from survey participants. Number of nodes: 4038, Number of edges: 176167, Diameter: 10;
  \item[GRCQ] General Relativity and Quantum Cosmology (GR-QC) collaboration network. Data are collected from the e-print arXiv and covers scientific collaborations between authors of papers submitted to GR-QC category. The data covers papers in the period from January 1993 to April 2003. Number of nodes: 5242, Number of Edges: 14496, Diameter: 17;
  \item[Erd\"os] Erd\"os collaboration network. Number of nodes: 472, Number of edges: 2628, Diameter: 11.
 \end{description}

% We approximate the exponential centrality index $\exp(\mathcal A)_{kk}$ by running $n$ iterations of the Lanczos algorithm with starting vector $\uno_k$, the $k$-th canonical vector. 
% Moreover, if $N$ is the number of nodes in the network, we can simultaneously approximate the distances $\dist_G(k,\ell)$, $\ell = 1,\dots, N$, by modifying the Lanczos algorithm as discussed in Section  \ref{sec:lanczos:dist}. 
% In what follows we denote this approximation by $d_n(k,\ell)$.
% All the centrality indexes $\exp(\mathcal A)_{kk}$ 
% can be approximated by $N$ separate Lanczos processes with starting vectors $\uno_1,\dots, \uno_N$. Using the same strategy, we get in this way the approximation $d_n(k,\ell)$ for all $k,\ell=1,\dots,N$.

% We compared the proposed methods with $d^M(k,\ell)$: the distance between $k$ and $\ell$ computed by the Matlab function \verb distances .
% \begin{rev}
% Note that  
% when $k$ and $\ell$ are disconnected, $d_n(k,\ell) = n$ for every $n$, while \verb distances  correctly sets to $\infty$ the distance between $k$ and $\ell$.
% \end{rev}

We approximate the distances $\dist_G(k,\ell)$, $k, \ell = 1,\dots, N$, by running $N$ Lanczos algorithms with inputs $\mathcal{A}$ and $\uno_k$, using the strategy discussed in Section  \ref{sec:lanczos:dist}. We denote this approximation by $d_n(k,\ell)$ and we compare it with the distance between $k$ and $\ell$ computed by the Matlab function \verb distances .  
 Note that  
when $k$ and $\ell$ are disconnected, $d_n(k,\ell) = n$ for every $n$, while \verb distances  correctly sets to $\infty$ the distance between $k$ and $\ell$.

In order to evaluate the proposed method, we consider the quantity 
$$\varrho_n(G) = \frac{\#\{(k,\ell) | \, d_n(k,\ell) \neq d^M(k,\ell), \, k\neq\ell, \, \dist_G(k,\ell) < \infty \}} {\#\{(k,\ell) | \, k \neq \ell, \, \dist_G(k,\ell) < \infty \} }, $$
which depends on the number of pairs $(k,\ell)$
for which $d_n(k,\ell) \neq d^M(k,\ell)$
(excluding the disconnected  and the coincident nodes). 
Table \ref{tab:dist} presents $\varrho_n(G)$ for some values of $n$
and for the networks listed above. 
The table clearly shows that for a small number of Lanczos iterations 
we are able to determine most of the distances.
Moreover, when $n$ is greater or equal to the diameter of the network and $\dist_G(k,\ell) < \infty$ we always have
$d_n(k,\ell) = d^M(k,\ell)$.

\begin{table}[t]
\begin{center}
\begin{tabular}{|c|cccc|}
\hline
   $n$   & Erd\"os   &     Facebook   &     GRQC       &     Gnutella \\
   \hline
   7  & $1.7188e-02$ &  $1.6729e-02$  &  $2.5187e-01$  &  $5.1500e-04$ \\
   9  & $5.4461e-04$ &  $7.2177e-04$  &  $3.0759e-02$  &  $5.1438e-08$ \\
   11 &     $0$      &       $0$      &  $2.3687e-03$  &      $0$       \\
   13 &     $0$      &       $0$      &  $1.0956e-04$  &      $0$       \\
   15 &     $0$      &       $0$      &  $5.4373e-06$  &      $0$       \\
   17 &     $0$      &       $0$      &       $0$      &      $0$       \\
   \hline

  \end{tabular}
  \caption{Values of $\varrho_n(G)$ for $n=7,9,11,13,15,17$
and for the networks:
Erd\"os, Facebook, GRQC, and Gnutella.}
\label{tab:dist}
\end{center}
\end{table}

Now, for each network, we select the $10$ nodes having smallest centrality $\exp(\mathcal A)_{kk}$ and we perturb the edge topology of the graph by adding all the missing edges among those nodes (obtaining a clique connecting all the ``least important'' nodes).

The plots of Figure \ref{fig:real-networks} represent the actual variation of network exp-centrality values $|\exp(\mathcal A)_{kk} - \exp(\tilde {\mathcal A})_{kk} |$ (red crosses)
and the bound in Corollary \ref{cor.exp} combined with the results in Lemma \ref{lemma:Apow:normalized} (blue circle). 
Recall that, as before, nodes in the plots are labeled according to the distance from (and to) the set $S$ of modified nodes. %: the larger is the node index $k$ the farther $k$ is from $S$. 
The results show that the proposed bounds, although not tight,  well approximate the actual behavior of the variation of $f(A)_{kk}$. This allows to predict the nodes whose centrality index remains effectively  unchanged under perturbations of the original graph topology. For example,  the exp-centrality of all the nodes from 3000 onwards in GRQC is guaranteed to be unchanged up to 10 digits of precision.

% This relabeling allows to perceive how the proposed bounds well resemble the actual behavior of the variation of $f(A)_{k\ell}$. In fact 

% Trim : l b r t
\begin{figure}[t]
 \centering
 \includegraphics[width=.45\textwidth, trim =  33mm 7mm 43mm 6mm, clip]{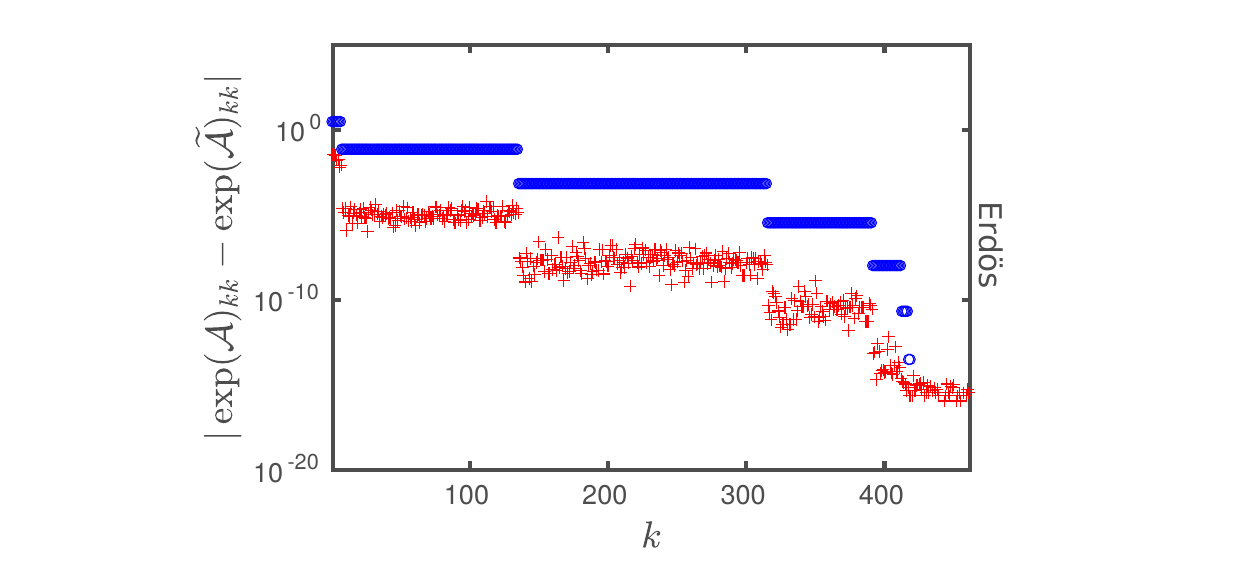}
 \includegraphics[width=.45\textwidth, trim =  22mm 7mm 28mm 5mm, clip]{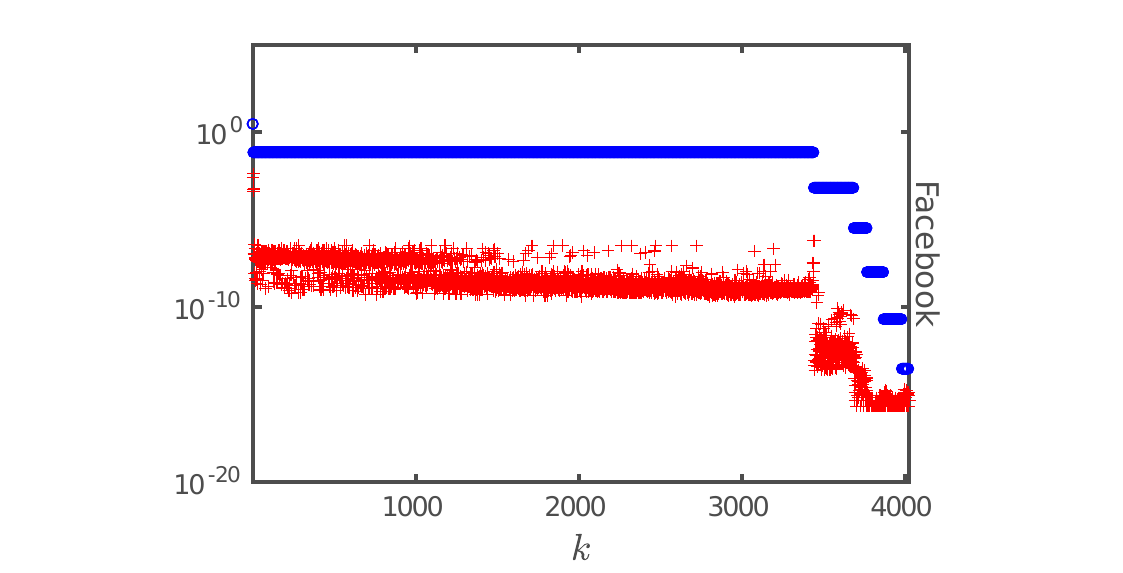}\\
 \includegraphics[width=.45\textwidth, trim =  32mm 1mm 42mm 5mm, clip]{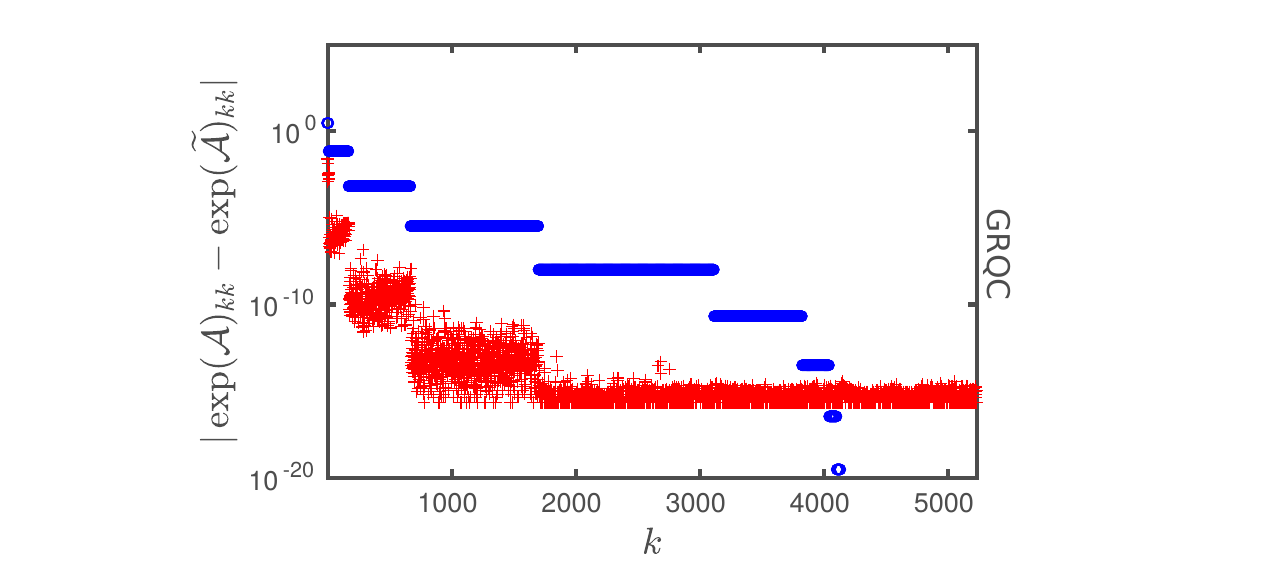}
 \includegraphics[width=.45\textwidth, trim =  22mm 1mm 27mm 6mm, clip]{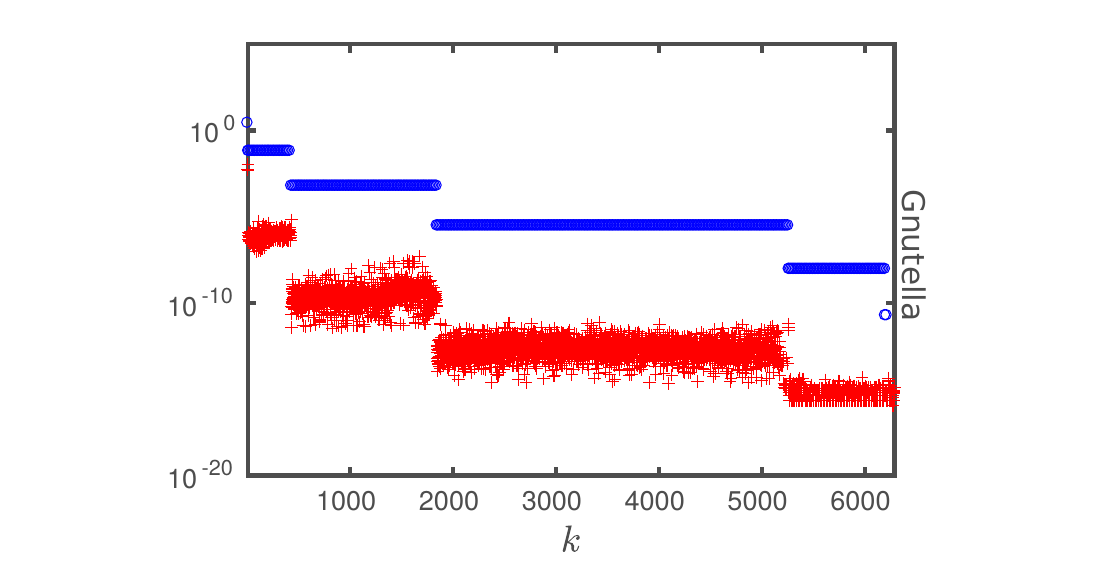}
 \caption{Absolute variation $|\exp(\mathcal A)_{kk}-\exp(\tilde{\mathcal A})_{kk}|$ when $k$ ranges from $1$ to $N$. The perturbed edges are $\delta E = S\times S$, where $S$ is the set of $10$ nodes with least $\exp$-centrality. Nodes in the plot are ordered so that the larger is $k$ the farther it is from $S$. Red crosses show the actual difference whereas the blue circles  show the bound of Corollary \ref{cor.exp} combined with Lemma \ref{lemma:Apow:normalized}.  }\label{fig:real-networks}
\end{figure}

 \section{Conclusion}
 Centrality and communicability indices based on function of matrices are among the most effective measures of the importance of nodes and of the robustness of edges in a network. These quantities are defined as the entries $f(A)_{k\ell}$ where $f(A)$ is a suitable function of a matrix $A$ describing the structure of the network $G$.   
 In this work we address the somewhat natural problem of understanding the stability of such indices with respect to perturbations in the edge topology of the graph. 
 This is important because, if $A$ is modified into $\tilde A = A+\delta A$, the entries of $f(\tilde A)$ should in principle be re-computed from scratch and this is can be a very costly operation. Thus being able to efficiently update the entries of $f(A)$ when $A$ undergoes such a perturbation is a relevant  task. When $\delta A$ has low rank, this problem can be easily addressed via the Sherman-Morrison formula for the special function $f(z)=z^{-1}$. When $f$ is a rational function this problem was considered in  \cite{bernstein2000rational}, whereas, for the case of general functions $f$, important advances in this direction have been made in \cite{beckermann2017low}, simultaneously and independently with respect to the present work.

 On a related but somewhat different line, instead, our analysis  reveals that the absolute variation of  $f(A)_{k\ell}$ decays exponentially with respect to the distance in $G$ that separates $k$ and $\ell$ from the set of nodes touched by the perturbed edges. 
 The knowledge of this behavior can be of help to practical applications. In fact,  we propose a simple numerical strategy that allows to compute the distances between nodes in $G$ simultaneously with the computation of the entries of $f(A)$, with essentially no additional cost. In particular, computing the diagonal of $f(A)$ for undirected networks (the network $f$-centrality scores) allows to compute the all-pairs shortest-path distances in the graph. Thus, using the proposed bounds, we are able to predict the magnitude of variation in the $f$-centralities of $G$ when changes occur in a localized set of edges or, vice-versa, for each node $k$ we can locate a set of nodes whose change in the edge topology affects the score $f(A)_{kk}$ by a small order of magnitude.     
 
 Examples of applications include the case where  the edge topology is evolving in time and changes in $G$ happen more frequently in sub-graphs being far  with respect to the set of nodes one is actually interested in, or  where the information on the edge structure of certain nodes is not fully reliable or, equivalently, is likely to be  affected by noise.

 Finally, the results proposed are numerically tested on some example networks borrowed from real-world applications. Our experiments show that the proposed bounds well resemble the actual behavior of the variation of $f(A)_{k\ell}$ although being some orders of magnitude larger. A clear margin for improvement and further work is thus left open, in order  to determine a better constant $c>2$ to be added in the exponent $\delta+c$ of Theorem \ref{thm.decay.tau}.  
 A possible direction in this sense is, to our opinion, the connection with Krylov methods. For instance, Theorem 5.2 in \cite{beckermann2017low} could be tailored to the graph matrix setting considered here to obtain bounds similar to the ones we have presented.

 \section*{Acknowledgment}
We are grateful to two anonymous referees for the numerous  comments and suggestions that have led to several remarkable improvements.

% \bibliographystyle{plain}
%  \bibliography{biblio}

\end{document}